\newcounter{myequation}[equation]
\theoremstyle{plain}
\newtheorem{theorem}{Theorem}[section]
\newtheorem{conjecture}[theorem]{Conjecture}
\newtheorem{proposition}[theorem]{Proposition}
\newtheorem{lemma}[theorem]{Lemma}
\newtheorem{corollary}[theorem]{Corollary}
\theoremstyle{definition}
\newtheorem{definition}[theorem]{Definition}
\theoremstyle{remark}
\newtheorem{remark}[theorem]{Remark}
\newtheorem{example}[theorem]{Example}
\newtheorem{fact}[theorem]{Fact}
\numberwithin{equation}{section}
\def\epsilon{\varepsilon}
\def\Magma{\textsc{Magma}}
\def\ie{i.e.}
\def\PHM{toggle model}
\DeclareMathOperator{\Aut}{Aut}
\DeclareMathOperator{\Char}{char}
\DeclareMathOperator{\DOv}{\underline{DO}}
\DeclareMathOperator{\DO}{DO}
\DeclareMathOperator{\Frac}{Frac}
\DeclareMathOperator{\GL}{GL}
\DeclareMathOperator{\PGL}{PGL}
\DeclareMathOperator{\PSL}{PSL}
\DeclareMathOperator{\Proj}{Proj}
\DeclareMathOperator{\SL}{SL}
\DeclareMathOperator{\SO}{SO}
\DeclareMathOperator{\Spec}{Spec}
\DeclareMathOperator{\Sym}{Sym}
\newcommand{\dq}{{/\kern -3pt/}}
\newcommand{\ds}{{\mathrm{ss}}}
\def\m{\mathfrak{m}}
\def\p{\mathfrak{p}}
\def\C{\mathbb{C}}
\def\F{\mathbb{F}}
\def\O{\mathcal{O}}
\def\P{\mathbb{P}\,}
\def\Q{\mathbb{Q}}
\def\Z{\mathbb{Z}}
\def\Mb{\bm{M}}
\def\Xb{\bm{X}}
\def\Yb{\bm{Y}}
\def\Zb{\bm{Z}}
\def\xs{\mathsf{x}}
\def\ys{\mathsf{y}}
\def\Oc{\mathcal{O}}
\def\Cc{\mathcal{C}}
\def\xfrak{{\scriptstyle \mathfrak{X}}}
\def\dv{\underline{d}}
\def\ivv{\underline{\iota}}
\def\iv{\iota}
\def\Iv{\underline{I}}
\def\xv{\underline{x}}
\begin{document}

\title{Reduction type of smooth plane quartics}
\date{\today}

\begin{abstract}
  Let $C/K$ be a smooth plane quartic over a discrete valuation field. We
  characterize the type of reduction (\ie~smooth plane quartic, hyperelliptic
  genus 3 curve or bad) over $K$ in terms of the existence of a special plane
  quartic model and, over $\bar{K}$, in terms of the valuations of certain
  algebraic invariants of $C$ when the characteristic of the residue field is
  not $2,\,3,\,5$ or $7$.  On the way, we gather several results of general
  interest on geometric invariant theory over an arbitrary ring $R$ in the
  spirit of \cite{Se}. For instance when $R$ is a discrete valuation ring, we
  show the existence of a homogeneous system of parameters over $R$. We
  exhibit explicit ones for ternary quartic forms under the action of
  $\SL_{3,R}$ depending only on the characteristic $p$ of the residue
  field. We illustrate our results with the case of Picard curves for which we
  give simple criteria for the type of reduction.
\end{abstract}

\author[Lercier]{Reynald Lercier}
\address{%
  Reynald Lercier,
  DGA \& Univ Rennes, %
  CNRS, IRMAR - UMR 6625, F-35000
  Rennes, %
  France. %
}
\email{reynald.lercier@m4x.org}

\author[Liu]{Qing Liu}
\address{%
  Qing Liu,
  Institut de mathématiques de Bordeaux, CNRS - UMR 5251,
  Université de Bordeaux,
  F-33405 Talence, %
  France. %
}
\email{qing.liu@u-bordeaux.fr}

\author[Lorenzo]{Elisa Lorenzo Garc\'ia}
\address{%
  Elisa Lorenzo Garc\'ia,
  Univ Rennes, CNRS, IRMAR - UMR 6625, F-35000
  Rennes, %
  France. %
}
\email{elisa.lorenzogarcia@univ-rennes1.fr}

\author[Ritzenthaler]{Christophe Ritzenthaler}
\address{%
  Christophe Ritzenthaler,
  Univ Rennes, CNRS, IRMAR - UMR 6625, F-35000
  Rennes, %
  France. %
}
\email{christophe.ritzenthaler@univ-rennes1.fr}

\thanks{The authors would like to thank Jeroen Sijsling for his careful reading of an earlier version of the paper, Matthieu Romagny for useful references, Bianca Viray and Armand Brumer for inspiring discussions, and the referee for his/her comments improving the presentation of the manuscript. The second named author is partly supported by Xiamen University and the NSFC grant no. 11031004.
}

\subjclass[2010]{11G20, 14Q05, 14D10, 14D20, 14H25}
\keywords{smooth plane quartic, reduction, hyperelliptic, invariants, valuation}

\maketitle

\section{Introduction and main results}

Let $K$ be a discrete valuation field with valuation $v$, valuation ring $\O$
and a uniformizer $\pi$. Let $k=\Oc/\langle \pi \rangle$ be the residue field
of characteristic $p \geq 0$.  When $F$ is an integral polynomial, \ie~with
coefficients in $\O$, we denote by $\bar{F}$ its reduction modulo $\pi$.  A
smooth projective geometrically connected curve of genus $3$ is either a plane
quartic or a hyperelliptic curve depending on whether the canonical divisor is
very ample or not.  Let $C/K$ be a smooth plane quartic.
\begin{itemize}
\item We say that $C$ \emph{has good (resp. good quartic, resp. good
    hyperelliptic) reduction} over $K$ if $C$ has a projective smooth model
  $\Cc$ over $\O$ (resp. whose special fiber is a smooth plane quartic,
  resp. a hyperelliptic curve).
\item We say that $C$ \emph{has potentially good (resp. potentially good
    quartic, resp. potentially good hyperelliptic) reduction} if for some
  finite separable extension $(K', v')$ of $(K, v)$, $C_{K'}/K'$ has good
  (resp. good quartic, resp. good hyperelliptic) reduction over $K'$.
\item We say that $C$ \emph{has bad reduction} over $K$ if it does not have
  good reduction over $K$.
\end{itemize}
Note that the type of the potential reduction does not depend on the choice of
$(K', v')$ and is given by the type of reduction of the stable model of $C$
over some finite separable extension of $K$. Hence, in the following, the
expression ``after a possible finite extension of $K$'', or
\emph{potentially}, means that we are allowed to take a finite extension of
$K$ and still call $K, \O, v, \pi$ the corresponding notions.  Let us point
out already here that when using the adjective (semi)-stable alone, we always
mean in the Deligne-Mumford sense of (semi)-stability. Otherwise, while
talking later about Geometric Invariant Theory (GIT) stability, we will always
write \emph{GIT-(semi-)stable}.

Given such a curve $C/K$, we want to determine the reduction type of $C$ among
the above possibilities.  In the first two cases, we also look for an explicit
equation of the special fiber. In the first case, it will be again a smooth
plane quartic over $k$, whereas in the second case it will be isomorphic over
$\bar{k}$ to $y^2=f(x,z)$ where $f$ is a binary octic with no multiple roots.

\begin{example} \label{ex:x13}
Let us consider the plane smooth quartic $C/\Q : F=0$ where
$$F=(x_2 +x_3) x_1^3-(2 x_2^2 + x_3 x_2) x_1^2+(x_2^3-x_3 x_2^2+2 x_3^2 x_2-x_3^3) x_1-2 x_3^2 x_2^2+3 x_3^3 x_2.$$
This curve is an equation of $X_{ns}(13) \simeq X_s(13) \simeq X_0^{+}(169)$
which is studied in \cite[Cor.6.8]{BDMTV17}. By using a general result
from~\cite{edixhoven90}, they prove that this curve has good quartic reduction
away from $13$ and potentially good hyperelliptic reduction at 13 after a
ramified extension of degree 84. By using the characterizations we are giving
in this article and our implementations, we can automatically check that the
curve has potentially good hyperelliptic reduction at $13$ and that the
special fiber is the hyperelliptic curve of affine equation $y^2=x^7-1$ (see
Example~\ref{ex:bass}).
\end{example}

The most complicated task will be to distinguish good hyperelliptic reduction
from bad reduction.  By identifying a plane quartic with its $15$ coefficients
up to a multiplicative constant, we can view the set $\Xb$ of smooth plane
quartics as a subset of $\P^{14}$.  Since isomorphisms between smooth plane
quartics are linear, $\Xb/\SL_3(\bar{K})$ gives a description of the locus
$\Mb_3^{\textrm{qrt}}$ of plane quartics inside the moduli space $\Mb_3$ of
all curves of genus $3$. Considering the locus of hyperelliptic curves as ``a
boundary'' of the locus of $\Mb_3^{\textrm{qrt}}$ inside $\Mb_3$, the
challenge is to understand whether a family of plane smooth quartics has limit
in this boundary. Similar situations have been studied to describe the
boundary of the compactification of $\Mb_3$ and there is a rich literature on
this subject \cite{artebani, glass, hassett, hyeon, kang, kondo, mum-ens,
  yukie}. Surprisingly, the mentioned problem has never been fully addressed
before (still, see \cite[Prop.14 and Prop.15]{howe00} in the case of Ciani
quartics). Known results about the relations between plane quartics and
hyperelliptic genus 3 curves are basically contained in the following
proposition \cite[p.156]{clemens}, \cite{harriscm},
\cite[p.134]{harris-moduli}.

\begin{proposition}\label{2->1} Suppose $p \ne 2$.
  Let $s >0$ be an integer, $G \in \O[x_1,x_2,x_3]$ be a primitive (\ie~the
  gcd of its coefficients is 1) quartic form and $Q \in \O[x_1,x_2,x_3]$ be a
  primitive quadratic form. Let us assume that $\bar{Q}$ is irreducible and
  that $\bar{Q}=0$ intersects $\bar{G}=0$ transversely in $8$ distinct
  $\overline{k}$-points.  Let us denote $\Cc/\O$ the subscheme of the weighted
  projective space $\P^{(1,1,1,2)}$ defined by
  \begin{equation} \label{eq:modelhyp}
    \begin{cases}
      y^2 + G = 0, & \\
      \pi^s y - Q =0. &
    \end{cases}
  \end{equation}
  Then the generic fiber is isomorphic to the plane smooth quartic
  $C/K : Q^2 +\pi^{2s} G=0$ which has good hyperelliptic reduction. The
  special fiber of $\Cc$ is isomorphic to the double cover of $\bar{Q}=0$
  ramified over the 8 distinct intersection $\overline{k}$-points of
  $\bar{Q}=0$ with $\bar{G}=0$.
\end{proposition}
This motivates the following definitions.

\begin{definition}\label{def:HGRM}
  Let $C/K$ be a plane smooth quartic. When $p\ne 2$, we say that $C$ admits a
  \emph{\PHM} if there exist an even integer $s>0$, a primitive quartic form
  $G \in \O[x_1,x_2,x_3]$ and a primitive quadric $Q \in \O[x_1,x_2,x_3]$ with
  $\bar{Q}$ irreducible such that $Q^2+\pi^{2s} G=0$ is $K$-isomorphic to
  $C$. If, moreover, $\bar{Q}=0$ intersects $\bar{G}=0$ transversely in $8$
  distinct $\overline{k}$-points, the model $Q^2+\pi^{2s} G=0$ is a \emph{good
    \PHM} of $C$.  We refer to Theorem~\ref{th:is2} for the definition when
  $p=2$.
\end{definition}

When $C/K$ admits a plane quartic model over $\O$ which special fiber is
smooth then obviously this plane model is a stable model of $C$. When $C$
admits a good toggle model, it has good hyperelliptic reduction and
\eqref{eq:modelhyp} is a stable (actually smooth) model of $C$. The converse
holds as well, this is the main result of Section~\ref{sec:Liu}.
\begin{theorem}[See Theorems~\ref{th:goodmodel-f} and
  \ref{th:is2}] \label{th:goodmodel} Let $K$ be a discrete valuation field
  with residue field of characteristic $p \geq 0$. Let $C/K$ be a smooth plane
  quartic.  Then $C$ admits good hyperelliptic reduction over $K$ if and only
  if $C$ has a good {\PHM} over $K$.
\end{theorem}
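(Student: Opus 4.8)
The converse implication is Proposition~\ref{2->1} (as already noted), so the plan is to produce a good \PHM\ out of a smooth model with hyperelliptic special fibre; I describe this for $p\neq 2$, the case $p=2$ needing the modified notion of \PHM\ of Theorem~\ref{th:is2}. Start from a smooth projective model $\mathcal{C}/\mathcal{O}$ whose special fibre $\bar{C}$ is a smooth hyperelliptic curve; good reduction preserves the genus, so $\bar{C}$ has genus $3$. First I would form the relative canonical map: the $\mathcal{O}$-module $H^0(\mathcal{C},\omega_{\mathcal{C}/\mathcal{O}})$ is free of rank $3$ and its formation commutes with base change, so a basis yields $\phi\colon\mathcal{C}\to\mathbb{P}^2_{\mathcal{O}}$ that on the generic fibre is the canonical embedding of the plane quartic $C$ and on the special fibre is the canonical morphism of $\bar{C}$, i.e.\ the hyperelliptic double cover $\bar{C}\to\mathbb{P}^1_k$ composed with the degree-$2$ Veronese embedding onto a smooth conic.

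Next I would analyse $Z$, the scheme-theoretic image of $\phi$: it is integral and flat over $\mathcal{O}$, hence of the form $\{F=0\}$ with $F\in\mathcal{O}[x_1,x_2,x_3]$ primitive of degree $4$; its generic fibre is $C$, while its special fibre is supported on the conic, so after rescaling $F$ by a unit one has $\bar{F}=\bar{Q}^2$ with $\bar{Q}$ irreducible cutting out the conic. Moreover $\phi$ is proper and quasi-finite, hence finite, and birational, so $\mathcal{C}$ is the normalization of $Z$; this identification is what feeds information about $\bar{C}$ back into $F$. Lifting $\bar{Q}$ to a primitive quadric $Q$ and writing $F=Q^2+\pi^m R$ with $R$ primitive and $m\geq 1$ (finite, since otherwise $F$ is a square over $\widehat{\mathcal{O}}$ and $C_{\widehat{K}}$ is non-reduced), I would first put this in normal form: whenever $\bar{Q}\mid\bar{R}$, replacing $Q$ by $Q+\tfrac12\pi^m S$ for a suitable lift $S$ of $\bar{R}/\bar{Q}$ (here $p\neq 2$ is used) strictly increases $m$, and iterating --- again a non-terminating process would make $F$ a square --- yields $\bar{Q}\nmid\bar{R}$.

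The two remaining issues --- the parity of $m$ and the transversality --- I would settle by comparing local models of the normalization with the reduced, genus-$3$ curve $\bar{C}$. At the generic point $\eta$ of the conic the image of $R$ is a unit (as $\bar{Q}\nmid\bar{R}$), so $Q/\pi^{\lfloor m/2\rfloor}$ is integral over $\mathcal{O}_{Z,\eta}$; if $m$ were odd this element would square to $-\pi R$, forcing $\pi$ to have valuation $2$ along a component of the special fibre of the normalization of $Z$, contradicting the reducedness of $\mathcal{C}_k=\bar{C}$. Hence $m=2a$, and setting $y:=Q/\pi^a$ (so $y^2=-R$) identifies $C$, up to $K$-isomorphism, with the generic fibre of $Q^2+\pi^{2a}R=0$, i.e.\ with $\{y^2+R=0,\ \pi^a y-Q=0\}\subset\mathbb{P}^{(1,1,1,2)}_K$. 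Its closure $\mathcal{Z}$ in $\mathbb{P}^{(1,1,1,2)}_{\mathcal{O}}$ maps finitely and birationally onto $Z$ (project away $y$), so $\mathcal{C}$ is the normalization of $\mathcal{Z}$ as well, and the reduced special fibre of $\mathcal{Z}$ is the double cover of the conic given by $y^2=-\bar{R}|_{\mathrm{conic}}$, a nonzero binary octic; thus $\bar{C}$ is the normalization of this double cover. Since the number $b$ of branch points of $\bar{C}$ over $\mathbb{P}^1$ satisfies $b\leq 8$, with equality exactly when the octic $\bar{R}|_{\mathrm{conic}}$ is squarefree, and a smooth hyperelliptic double cover has genus $\tfrac12 b-1$, genus $3$ forces $b=8$: $\bar{R}=0$ meets $\bar{Q}=0$ in $8$ distinct $\bar{k}$-points, transversely by Bézout. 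Hence $Q^2+\pi^{2a}R=0$ is a good \PHM\ of $C$.

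I expect the main obstacle to be twofold. First, the scheme-theoretic bookkeeping around the image $Z$ and the normalization --- flatness of $Z$, that $\mathcal{C}$ is the normalization of both $Z$ and $\mathcal{Z}$, and the behaviour of all of this on special fibres --- must be carried out cleanly for the valuation and genus arguments to bite. Second, and more seriously, the characteristic $2$ case, where the canonical image of a hyperelliptic curve and the relevant (now inseparable) double-cover structure behave differently, and the very definition of a toggle model has to be adapted; this is exactly the content of Theorem~\ref{th:is2}.
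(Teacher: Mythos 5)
Your argument is correct, and its skeleton is the same as the paper's (Section~\ref{general-facts} and Theorem~\ref{th:goodmodel-f}): the relative canonical map, the image quartic $Z:F=0$ with $\bar F=q^2$ after rescaling, the decomposition $F=Q^2+\pi^m R$ normalized so that $q\nmid\bar R$ (the paper gets this by taking $m$ maximal rather than by iteration, but the completing-the-square trick is the same), and the identification of the smooth model $\mathcal{C}$ with the normalization of $Z$. Where you genuinely diverge is in the two key verifications. For the parity of $m$, the paper invokes its normality criterion (Lemma~\ref{normality-proj}): if $m$ were odd, the weighted-projective model $\Proj\bigl(\mathcal{O}[x_1,x_2,x_3,y]/(Q-\pi^{\lfloor m/2\rfloor}y,\,y^2+\pi R)\bigr)$ would be normal, hence isomorphic to $\mathcal{C}$, contradicting the non-reducedness of its special fiber; your localization at the generic point of the conic, where $Q/\pi^{\lfloor m/2\rfloor}$ squares to $-\pi R$ with $R$ a unit and forces $v(\pi)$ to be even along each component of $\mathcal{C}_k$, reaches the same contradiction by a more elementary, purely local computation. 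For transversality, the paper concludes from the same normality lemma that the weighted-projective model \emph{is} $\mathcal{C}$, hence smooth, and then reads off multiplicity one from the local equation $y^2=x^ra$ at an intersection point (Lemma~\ref{lm:reduced}(b)); you instead replace this local analysis by a global Riemann--Hurwitz count on the special fiber ($g=b/2-1$ for the normalization of $y^2=\text{octic}$, so $g=3$ forces $8$ branch points and a squarefree octic). Both substitutes are valid in residue characteristic $\neq 2$; the genus argument does rest on identifying $\mathcal{C}_k\to(\mathcal{Z}_k)_{\mathrm{red}}$ as the normalization of the double cover, which your degree count over the conic justifies, and the flatness/normalization bookkeeping you flag as the main obstacle is exactly what the paper's Lemmas~\ref{normality-proj} and \ref{lm:reduced} are designed to package. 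Deferring $p=2$ is appropriate: there the model must take the form $uF=Q^2+\pi^sQH+\pi^{2s}G$ and the construction becomes an iterative approximation, which is the separate content of Theorem~\ref{th:is2}.
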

By using this result, the methods of \cite{kollar} and \cite{elsenhans-stoll}
and some ingredients from invariant theory, the question of the reduction type
(good quartic, good hyperelliptic or bad) over $K$ of a smooth quartic may be
answered (see remark~\ref{rem:kollar}).  \medskip

We turn now to the determination of the type \emph{when one allows extensions
  of $K$}. In the spirit of the work of \cite{liug2} on genus 2, we aim at
getting a characterization of the potential reduction type in terms of the
valuations of certain invariants of $C/K$ under the action of
$\SL_3(\bar{K})$. Let us start by recalling some results on the homogeneous
algebra of invariants of ternary quartic forms.

In \cite{dixmier}, Dixmier worked out a list of 7 homogeneous polynomial
invariants for the equivalence of ternary quartic forms under the action of
$\SL_3(\C)$, denoted $I_3, I_6, I_9,I_{12},I_{15},I_{18}$ and $I_{27}$. They
form a Homogeneous System Of Parameters (HSOP) for the algebra of invariants
$\mathcal A$ over $\C$, which means that the radical of the ideal they
generate is equal to the irrelevant ideal of the homogeneous algebra
$\mathcal A$ (see Definition~\ref{def:hsop} and remark~\ref{rem:hsop2}).
Later, this list is completed by Ohno (\cite[Theorem 4.1]{ohno}, see also
\cite{elsenhans}) into a list of 13 homogeneous generators of $\mathcal A$ as
$\C$-algebra. Theses invariants are defined over $\Z[\frac{1}{2 \cdot 3}]$,
and are now called the \emph{Dixmier-Ohno invariants}.  We denote $\DOv$ the
list of these invariants, $\DOv(F) \in K^{13}$ their values at a ternary
quartic form $F$ and when at least one of these values is not zero, we denote
$\DO(F)$ the corresponding point in the weighted projective space with weights
$3,6,9,9,12,12,15,15,18,18,21,21,27$.  They determine the isomorphism classes
of smooth quartic curves over $\C$ in the sense that two smooth quartics
$C_i:\,F_i=0$ are isomorphic over $\C$ if and only if $\DO(F_1)=\DO(F_2)$.

This result over $\C$ is clearly not sufficient for our purposes as we want to
deal with fields of arbitrary characteristic and with reduction
properties. Fortunately, Seshadri in \cite{Se} has generalized the Geometric
Invariant Theory (GIT) of Mumford over an arbitrary ring $R$. For the
convenience of the reader, we include in Section~\ref{sec:appendix} some
general results on this topic and proofs we were unable to find in the
literature. We also prove, under some assumptions on $R$\footnote{the
  assumptions are satisfied for instance for local rings and rings of integers
  of global fields.}, that there exists a HSOP over $R$, \ie~a finite list of
invariants defined over $R$ being a HSOP after base change to all residue
fields of $R$. It is a bit of a surprise as one knows that there exist
invariants over fields of small characteristics which do not come from
reducing invariants in characteristic $0$ (see \cite{basson} for binary forms
and remark~\ref{rem:nonlift} for ternary quartics).  In principle, one should
be able to exhibit a HSOP over $\Z$, but in practice the degrees may be too
large to be practical. This is why in Theorem~\ref{th:domodp}, we determine
explicit HSOP over $\O$ for ternary quartic forms which depend only on the
characteristic of the residue field. This is achieved in all characteristics
but $3$, for which we only have a conjectural HSOP.  (see
Theorem~\ref{th:domod3}). Note also that even for some large characteristics,
like $523$, the Dixmier invariants alone may not be a HSOP unlike in
characteristic $0$.  With the normalization of \cite[p.426]{gelfand},
\cite{demazure}, the \emph{discriminant} $D_{27}$ of a ternary quartic is a
degree $27$ polynomial with coefficients in $\Z$ and always belongs to our
HSOP (up to a multiplicative unit). It is related to the Dixmier invariant
$I_{27}$ by $2^{40} \cdot I_{27}=D_{27}$. A plane quartic $F=0$ over any field
is smooth if and only if $D_{27}(F) \ne 0$.

As we want to deal with valuations of the invariants, we need to introduce the following notation before stating our results.

\begin{definition} \label{def:minimalrepresentative} Let
  $\dv=(d_0,\ldots,d_n)\in\Z_{>0}^{n+1}$.  Given an element
  $\xv=(x_0,\ldots,x_n) \in K^{n+1} \setminus \{0\}$, we denote by
  $x=(x_0:\ldots:x_n)$ the corresponding point in the weighted projective
  space $\P^{\dv}(K)$. After a possible finite extension of $K$, one can
  always find a representative $(X_0,\ldots,X_n) \in \O^{n+1}$ of $x$ such
  that one of the $X_i$'s has valuation $0$, so it represents a section in
  $ \P^{\dv}(\O)$. We call such a representative a \emph{minimal
    representative} and denote it $\xv^{\min}$.
\end{definition}
A minimal representative is not uniquely defined but
two minimal representatives differ by the action of a unit and their
valuations are coordinate-wise equal.

\begin{definition} \label{def:normalizedval}
If $y \in K$ and $e \in \Z_{>0}$, we call the \emph{normalized
valuation of degree $e$ of $y$ with respect to $\xv$} the
(possibly infinite) number
$$v_x(y) = v(y)/e - \min \{v(y)/e,\,v(x_i)/d_i:\,i=0,\dots,n\} \in
\Q_{\geq 0} \cup \{\infty\},$$ (By convention $v(0)=\infty$).  We have
$v_x(y)=0$ if and only if $y\ne 0$ and $x_i^{e}y^{-d_i}\in \O$ for all
$i\le n$.

Notice that if $(X_0,\ldots,X_n,Y)$ is a minimal representative of the
point $(x_0:\ldots:x_n:y) \in \P^{\dv,e}$, then $v_{x}(y)=v(Y)/e$.
\end{definition}

In the context of invariants, we will use the following definition and notation.

\begin{definition} \label{def:minimalform} Let $ \Iv =(I_0, \ldots, I_n)$ be a
  list of homogeneous polynomials of degree $\underline{d}$ in
  $K[T_0,\ldots, T_m]$. For $F \in K^{m+1}\setminus \{0\}$, if
  $\Iv(F) = (I_0(F), \ldots, I_n(F))$ is not the zero vector, and if $J$ is a
  homogeneous polynomials of degree $e$ we denote by $v_{I}(J(F))$ the
  normalized valuation of degree $e$ of $J(F)$ with respect to $\Iv(F)$. It
  only depends on the point of $\P^{\underline{d}}(K)$ defined by $F$.
  Finally, if $F_0 \in \O^{m+1}$ and $\Iv(F_0) = \Iv(F_0)^{\min}$ we say that
  $F_0$ is a \emph{minimal form}.
\end{definition}

\begin{remark}
  Let $J, I_0, \dots, I_n\in E\subseteq \O[T_0, \dots, T_m]$ be homogeneous
  elements of some graded sub-$\O$-algebra and suppose that
  \begin{equation} \label{eq:hsg}
    E_+\subseteq \sqrt{(I_0, \dots, I_n)}
  \end{equation}
  that is, $\{I_0,\dots, I_n\}$ is a homogeneous system of radical generators
  (HSORG) of $E$, see Definition~\ref{def:hsop}. In particular, HSOP are HSORG
  with further condition on the cardinality. The condition $v_I(J(F))=0$ is
  equivalent to $F\in D_+(J)$, the principal open subset of $\Proj E$
  associated to $J$. Here $F$ is viewed as a point of $(\Proj E)(K)$. In
  particular, the condition $v_{I}(J(F))=0$ does not depend on the choice of
  the system $I_0,\dots, I_n$ as long as the latter is a HSORG of $E$.
\end{remark}

Considering ternary quartic forms $F$, a natural question, and a key argument
for the sequel, is to know if, after a possible finite extension of $K$, there
exists a minimal form $\GL_3(K)$-equivalent to $F$. We show in
Proposition~\ref{prop:ssred} that this is the case when $F=0$ is GIT-stable
over $K$ and the list $\Iv$ is a HSORG over $\O$.  Using this result, we
obtain the following theorem.

\begin{theorem}[See Theorem~\ref{th:gnhr}] \label{th:GNHR} Let $K$ be a
  discrete valuation field with valuation $v$, valuation ring $\O$ and residue
  field $k$ of characteristic $p \geq 0$.  Let $\Iv$ be a list of invariants
  giving rise to a HSORG for ternary quartic forms under the action of
  $\SL_{3,\O}$. Then a smooth plane quartic $C/K : F=0$ has potentially good
  quartic reduction if and only if $v_{I}(D_{27}(F))=0$ (in other words, for
  any invariant $I_i$ of degree $d_i$ in the list $\Iv$, we have
  $I_i(F)^{27}/D_{27}(F)^{d_i}\in \O$).
\end{theorem}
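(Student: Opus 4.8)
The plan is to exploit the characterization of potentially good quartic reduction via stability and minimal forms, so that the numerical criterion on $D_{27}$ becomes a restatement of the GIT semistability/stability dichotomy. First I would record the easy direction: if $C/K:F=0$ has potentially good quartic reduction, then after a finite extension there is an integral quartic form $F_0$, $\GL_3(K)$-equivalent to $F$, whose reduction $\overline{F_0}$ defines a smooth plane quartic, hence $D_{27}(\overline{F_0})\neq 0$, i.e.\ $v(D_{27}(F_0))=0$. Since $\Iv$ contains a HSOP over $\O$, the invariants cannot all vanish mod $\pi$ on a smooth quartic; after scaling $F_0$ we may assume $\Iv(F_0)=\Iv(F_0)^{\min}$, and then $v_I(D_{27}(F))=v(D_{27}(F_0))=0$. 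The only point to be careful about here is that the normalized valuation $v_I(D_{27}(F))$ is intrinsic (independent of the chosen representative and invariant under $\GL_3(K)$-equivalence up to the obvious scaling), which follows from the definition of minimal representative and the fact that the $I_i$ are $\SL_3$-invariant and homogeneous, so a $\GL_3$-change of variables only multiplies $\Iv(F)$ by a fixed power of the determinant.

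For the converse, suppose $v_I(D_{27}(F))=0$. Since $D_{27}(F)\neq 0$ the quartic is smooth, hence stable in the GIT sense over $K$ (smooth plane quartics are stable points for the $\SL_3$-action — this is classical and is used elsewhere in the paper). After a possible extension of $K$, Proposition~\ref{prop:ssred} provides a minimal form $F_0$ that is $\GL_3(K)$-equivalent to $F$, i.e.\ $\Iv(F_0)=\Iv(F_0)^{\min}$, so some $I_i(F_0)$ is a unit. The hypothesis $v_I(D_{27}(F))=0$ then forces $v(D_{27}(F_0))=0$, i.e.\ $D_{27}(\overline{F_0})\neq 0$, so $\overline{F_0}=0$ is a smooth plane quartic over $k$. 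It remains to see that $\overline{F_0}$ actually defines the reduction of $C_{K'}$: the subscheme $\Cc\subset\P^2_\O$ cut out by $F_0$ is a projective scheme over $\O$ whose special fiber $\overline{F_0}=0$ is smooth of the expected dimension, hence (by the fibral flatness / smoothness criterion) $\Cc\to\Spec\O$ is smooth and proper with smooth plane quartic special fiber, and its generic fiber is $C_{K'}$. Thus $C$ has potentially good quartic reduction.

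Finally, for the parenthetical reformulation, observe that $v_I(D_{27}(F))=0$ means, by definition of the normalized valuation of degree $27$, that in a minimal representative of $\bigl(I_0(F):\cdots:I_n(F):D_{27}(F)\bigr)\in\P^{\dv,27}$ the last coordinate has valuation $0$; equivalently $v(D_{27}(F))/27=\min\{v(D_{27}(F))/27,\;v(I_i(F))/d_i\}$, i.e.\ $27\,v(I_i(F))\ge d_i\,v(D_{27}(F))$ for all $i$, which is exactly the integrality of $I_i(F)^{27}/D_{27}(F)^{d_i}$. I expect the main obstacle to be the invocation of Proposition~\ref{prop:ssred}: one must verify its hypotheses (here $\O$ is a DVR, so excellence is automatic; $F=0$ is stable because it is smooth; and $\Iv$ contains a HSOP over $\O$ by assumption), and one must be careful that the extension of $K$ needed to produce the minimal form is harmless since potential good reduction is insensitive to finite separable extensions. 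The second delicate point is the passage from "$\overline{F_0}$ smooth" to "$\Cc/\O$ smooth with generic fiber $C_{K'}$", which uses properness of $\P^2_\O$ and the fact that smoothness of a hypersurface can be checked on the special fiber once flatness is known.
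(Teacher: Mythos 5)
Your proposal follows essentially the same route as the paper: the forward direction via the canonical model (the discussion at the beginning of Section~\ref{general-facts} shows a smooth model with non-hyperelliptic special fiber is a smooth plane quartic over $\O$, whence $v(D_{27}(F_0))=0$), and the converse via Proposition~\ref{prop:ssred} to produce an integral minimal form with unit discriminant — this is exactly the content of Corollary~\ref{cor:disc0}, which the paper invokes and which you re-derive inline. Your unpacking of the parenthetical reformulation of $v_I(D_{27}(F))=0$ is also correct.

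There is one genuine flaw in your verification of the hypotheses of Proposition~\ref{prop:ssred}: the claim that ``$\O$ is a DVR, so excellence is automatic'' is false. Not every DVR is excellent (the paper's own footnote lists Dedekind domains of characteristic $0$ and \emph{complete} noetherian local rings among excellent rings, precisely because a general DVR of positive characteristic need not be). The paper handles this by first replacing $\O$ with its completion, justifying that neither the reduction type of the stable model (\cite[Prop.\ 10.3.15]{liu-book}) nor the condition $v_I(D_{27}(F))=0$ changes under completion; you should insert this reduction step at the start of your argument. With that correction the proof is complete and matches the paper's.
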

This theorem is proved in Section~\ref{sec:Shah}. Once worked out explicit
HSOP in Section~\ref{sec:dixm-invar-char}, it gives a practical criterion in
all characteristics. Indeed, notice, that though we use the existence of a
minimal form, it is not necessary to compute it to check whether
$v_{I}(D_{27}(F))=0$. Similarly, we do not need the minimal form to compute an
equation of the special fiber when the curve has potentially good quartic
reduction, at least generically and if $p>7$ (see remark~\ref{rmk:nhspecial}).
Some illustrations are given in Example~\ref{ex:cm} and in
Theorems~\ref{ex:picard}, \ref{ex:picard2} and \ref{ex:picard3} with the
characterization of potentially good quartic reduction for Picard curves in
terms of low degree polynomials in their coefficients when $\Char(K) \ne 3$.

From Section~\ref{sec:comphyp} and beyond, we restrict to a discrete valuation
ring $\O$ with residue field $k$ of characteristic $p=0$ or $p>7$ (see a
discussion about this restriction in remark~\ref{rem:p>7}).  We focus there on
distinguishing between curves which have potentially good hyperelliptic
reduction or have not potentially good reduction. In
Proposition~\ref{prop:MHR}, we first prove that $C/K : F=0$ potentially admits
a (not necessarily good) \PHM{} if and only if any minimal representative of
$\DO(F)$ reduces modulo $\pi$ to $\DO(Q_0^2)$ where $Q_0=x_2^2-4 x_1 x_3$ is a
fixed non-degenerate quadric (the choice of this particular one will become
clearer later).

It remains to characterize in terms of the invariants when a \PHM{}
$Q_0^2+\pi^{2s} G=0$ is good, \ie~when the intersection of $\bar{Q}_0=0$ with
$\bar{G}=0$ is transversal. This is the content of
Section~\ref{sec:char-hyper-reduct}. In order to do so, we introduce a
well-known $\SL_2(\bar{K})$-equivariant linear morphism $b_8$ from the space
of ternary quartic forms $F$ to the space of binary octic forms.  The action
on the ternary quartics is given through a morphism
$h : \SL_2(\bar{K}) \to \SL_3(\bar{K})$ whose image denoted $\SO_3(\bar{K})$
is the stabilizer of $Q_0$.  Starting from a good \PHM{} $F=Q_0^2+\pi^{2s} G$
of $C/K$, a quick computation shows that $b_8(F)=\pi^{2s} b_8(G)$ and that
$y^2=\overline{b_8(G)}$ is an affine equation of the special fiber of the
stable model. The algebra of invariants of binary octics under the action of
$\SL_2(\bar{K})$ is generated by the classical Shioda invariants
$j_2,\ldots,j_{10}$, see \cite{shioda67}. The invariants $j_2,\ldots,j_7$
still form a HSOP over $\O$, see Corollary~\ref{cor:Shiodas} and the
discussion before. Evaluating the Dixmier-Ohno invariants on a \PHM{}
$Q_0^2+\pi^{2s} G=0$ and the Shioda invariants $j_2,\ldots,j_7$ on $b_8(G)$
gives algebraic relations between the two sets and considering their
$\pi$-expansion reveals a list
$\iota=(\iota_6,\iota_9,\iota_{12},\iota_{15},\iota_{18},\iota_{21})$ of six
explicit invariants for ternary quartic forms, see
equation~\eqref{eq:DOshort}. Moreover we get a similar
congruence~\eqref{eq:trompe} between the discriminant of binary octics,
$D_{14}$, and $I_3^5\, I_{27}$.  For a good \PHM{}, one has
$v_{\iota}(I_3^5(F)\, I_{27}(F)) = v(D_{14}(b_8(G)))=0$. Actually, we have the
following equivalence.

\begin{theorem} \label{th:main} Let $K$ be a discrete valuation field with
  valuation $v$, valuation ring $\O$ and a uniformizer $\pi$. Let
  $k=\Oc/\langle \pi \rangle$ be the residue field of characteristic
  $p \ne 2,3,5$ and $7$.  Let $C/K$ be a smooth plane quartic defined by
  $F=0$. Then $C$ has potentially good hyperelliptic reduction if and only if
         $$v_{\DO}(I_3(F))=0, \quad v_{\DO}(I_{27}(F))>0 \; \textrm{and} \;
         v_{\iv}(\,I_3(F)^5\, I_{27}(F)\,)=0.$$ Under these conditions, one
         can also obtain an explicit equation for the special fiber.
       \end{theorem}

To prove that the conditions of Theorem~\ref{th:main} are also sufficient, one
first observes that they imply the existence of a \PHM{} $F=Q_0^2+\pi^{2s} G$
for $C$. Then, a crucial step is to prove that we can choose it such that
$\overline{b_8(G)}$ is GIT-semi-stable. In order to do this, we
rigidify a \PHM{} in the following way: there is, up to a multiplicative
constant, a unique contravariant $\rho$ of order 2 and degree 4, which can be
seen as a quadratic form whose coefficients are degree $4$ polynomials in the
coefficients of $F$ (see \cite{dixmier} and \cite{LRS16}). After a possible finite extension of $K$ and completion of $K$, one can always find
a \PHM{} $F$ such that $\rho(F) = \rho(Q_0^2)$. This extra-information allows us to control simultaneously the integrality of the coefficients of the \PHM{} and of  the form $b_8(G)$ we are looking for (see
Lemma~\ref{a>=b}).\\

Let us point out that the conditions in Theorem~\ref{th:main} may give the
wrong idea that the study of the valuations of $I_{3}$ and $I_{27}$ would be
enough to characterize the reduction. But this is not true since we need to
consider them inside the broader lists of invariants $\DO$ and $\iota$, and
normalize them accordingly to the valuations of all the invariants of these
lists.  Notice that a similar phenomena occurs for Siegel modular forms. In
\cite{LR19}, we show that the pull-back of the Siegel modular form $\chi_{28}$
to the space of quartics is $-2^{171} \cdot 3^3 \cdot I_{27}^3 I_3$. Now, over
$\C$, the non-vanishing of this modular form together with the vanishing of
the modular form $\chi_{18}$ (which is known to be related to $I_{27}^2$)
characterizes the locus of hyperelliptic Jacobians among abelian varieties
(see \cite[p.847]{tsuyumine-g3}). A natural guess is to expect that they would
be the key forms to find the reduction type of a quartic. However, their
expressions in terms of invariants seem to lead to the absurd statement that
the vanishing of $\chi_{18}$ always implies the vanishing of $\chi_{28}$.
Within this article, one sees that this is because these forms would have to
be considered inside broader lists to become significant.

One of the motivations of this work was, in the continuation of \cite{WINE1},
\cite{WINE1-2} and \cite{KLLRSS17}, to understand the primes dividing the
discriminant $I_{27}$ of a smooth plane quartic with complex multiplication
over a number field only in terms of the CM-field. We hope that the present
work can be useful for this task. Another direction is to refine the study of
the bad reduction type in the spirit of \cite{BW}, \cite{maistret} and to
describe more precisely the degenerated cases. We only have a quick peek at
this question in remarks~\ref{rk:singular} and \ref{rk:singular2} and set
aside an exhaustive study for a forthcoming article.

\section{Hyperelliptic reduction in terms of \PHM s} \label{sec:Liu}

\newcommand{\caract}{\mathrm{char}}
\newcommand{\cO}{{\mathcal O}}
\newcommand{\cC}{{\mathcal C}}
\newcommand{\W}{{\mathcal W}}
\newcommand{\cZ}{{\mathcal Z}}
\newcommand{\ol}{\overline}

In this section $K$ is a discrete valuation field with valuation $v$,
valuation ring $\O$ and a uniformizer $\pi$. Let $k=\Oc/\langle \pi \rangle$
be the residue field of characteristic $p \geq 0$.  Before proving
Theorem~\ref{th:goodmodel}, we need some preliminary results.

\subsection{Some general facts} \label{general-facts}

In this section we prove the existence of suitable models when $C$ has good
hyperelliptic reduction over $K$ (see Theorems \ref{th:goodmodel-f} and
\ref{th:is2}).

We start with some geometric observations. Suppose $C$ has good reduction over
$K$. Let $\cC$ be a smooth projective model over $\cO$.  Let
$\omega_{\cC/\cO}$ be the dualizing sheaf ($=\Omega^1_{\cC/\cO}$ as $\cC$ is
smooth). Then $\omega_{\cC/\cO}$ is a base-point free invertible sheaf and
induces a morphism
$$ f: \cC \to \P(H^0(\cC, \omega_{\cC/\cO}))\simeq \P^2_\cO. $$
On the generic fiber, $f$ is a closed immersion because $C$ is a plane
quartic. Let $\cZ=f(\cC)$ be the image of $f$ endowed with the reduced
subscheme structure. Then $f: \cC\to \cZ$ is a birational finite morphism. As
$\cC$ is normal, $f$ is just the normalization morphism.

\begin{fact} \label{fact} If we have a finite birational morphism
  ${\W}\to \cZ$, and if ${\W}$ is normal, then $\W \to \cZ$ is the
  normalization morphism (\cite[Proposition 4.1.22]{liu-book}), hence ${\W}$
  is isomorphic to $\cC$.%
\end{fact}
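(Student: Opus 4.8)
The plan is to derive the Fact from the uniqueness of the normalization, so that only two routine verifications remain. First I would check that $\cZ$ is an integral scheme with function field $K(C)$: it is reduced by the very choice of its reduced subscheme structure, it is irreducible because it is the continuous image of the irreducible scheme $\cC$ under $f$, and its function field is $K(C)$ since on generic fibers $f$ is the closed immersion exhibiting $C$ as a plane quartic, hence is birational onto its image. Consequently both $f\colon \cC \to \cZ$ and ${\W} \to \cZ$ are finite birational morphisms onto an integral scheme whose source is normal.

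Next I would apply \cite[Proposition 4.1.22]{liu-book}, which identifies any such morphism, canonically, with the normalization morphism of its target. Applied to ${\W} \to \cZ$ it gives that ${\W} \to \cZ$ is the normalization of $\cZ$; but it has already been observed that $f\colon \cC \to \cZ$ is the normalization of $\cZ$ as well. Since the normalization of an integral scheme is unique up to a unique isomorphism over that scheme, there is a (unique) $\cZ$-isomorphism ${\W} \simeq \cC$, which in particular proves the claim.

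The only point that deserves a word --- and it is a very mild one --- is to confirm that the hypotheses under which \cite[Proposition 4.1.22]{liu-book} is stated do hold here: that $\cZ$ is integral, checked above, and that its normalization is realised by a finite morphism, which is automatic in our setting since ${\W} \to \cZ$ is assumed finite (and $\cZ$ is Noetherian, being a closed subscheme of $\P^2_\cO$). Thus no genuine obstacle arises; the content of the Fact is precisely the unicity of normalization once these formal checks are in place.
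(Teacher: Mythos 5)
Your proposal is correct and follows essentially the same route as the paper: both identify $\W\to\cZ$ (and $f\colon\cC\to\cZ$) with the normalization morphism via \cite[Proposition 4.1.22]{liu-book} and conclude by uniqueness of normalization. The extra verifications you supply (integrality of $\cZ$, birationality of $f$ from the generic-fiber closed immersion) are exactly the implicit checks behind the paper's one-line assertion.
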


If $\omega_{\cC/\cO}$ is very ample, then $f$ is a closed immersion and $\cC$
is a smooth quartic in $\P^2_\cO$. Suppose from now on that $\omega_{\cC/\cO}$
is not very ample. Then $f_k : \cC_k \to \P^2_{k}$ is a finite generically
\'etale morphism of degree $2$ into its image $(\cZ_k)_{\mathrm{red}}$ which
is a smooth conic over $k$.  The scheme $\cZ$ is defined by an irreducible
quartic form $F\in \cO[x_1, x_2,x_3]$ as
\begin{equation} \label{eq:defZ}%
  \cZ=V_+(F)\subset \P^2_\cO.
\end{equation}
Let $q\in k[x_1,x_2,x_3]$ be a nondegenerate quadratic form defining
$(\cZ)_{\mathrm{red}}$:
\begin{equation}
  \label{eq:conic}
  \cZ_{\mathrm{red}}=V_+(q) \subset \P^2_k.
\end{equation}
In the sequel, $q$ will be fixed once for all.  As $V_+(\ol{F})=\cZ_k$, the
reduction $\overline{F}$ is equal to a multiple of $q^2$. Scaling $F$ by a
unit of $\cO$, we can suppose that
\begin{equation}
  \label{eq:Fq}
  \ol{F}=q^2.
\end{equation}
Given such an $F$, equation~\eqref{eq:Fq} implies that there exist a positive
integer $r>0$, a quadratic form $Q \in \cO[x_1,x_2,x_3]$ and a quartic form
$G\in \cO[x_1,x_2,x_3]$ such that
\begin{equation} \label{eq:F-Q2}%
  F=Q^2+\pi^{r} G,
\end{equation}
with $\overline{Q}=q$ and $\overline{G}\ne 0$. Note that if $\hat{\cO}$ is the
completion of $\cO$, then $F$ is not a square in $\hat{\cO}[x_1,x_2,x_3]$
because $C$ is geometrically reduced. So the biggest possible power $\pi^r$
dividing $F-Q^2$ in $\cO[x_1,x_2,x_3]$ for all liftings $Q$ of $q$ exists.
\medskip

We endow the polynomial ring $\cO[x_1,x_2,x_3,y]$ with the grading
$\deg x_i=1$ and $\deg y=2$.  The following lemma will be repeatedly used:

\begin{lemma}\label{normality-proj} Let $B$ be a graded $\cO$-algebra of the following form
  $$ B= \cO[x_1,x_2, x_3, y]/(Q-\pi^s y, y^2+\pi^{\epsilon} T), \quad T\in \cO[x_1,x_2,x_3]+\cO[x_1, x_2, x_3]y$$
  with $s\ge 0$, $\epsilon\in \{0, 1\}$ and $2s+\epsilon>0$.  Then the following
  properties hold:
  \begin{enumerate}[label=\emph{(\alph*)}]
  \item $B$ is flat over $\cO$.
  \item Suppose that $B\otimes K$ is normal.  If $B\otimes k$ is reduced when
    $\epsilon=0$ and $T\notin (\pi, Q, y)$ when $\epsilon=1$, then $B$ is
    normal.
  \item If furthermore, $F\in (y^2+\pi^{\epsilon}T)K^*$ in
    $K[x_1, x_2, x_3,y]/(Q-\pi^s y)$, then $\Proj B\simeq \cC$.
  \end{enumerate}
\end{lemma}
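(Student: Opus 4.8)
The plan is to work ring-theoretically, treating the three assertions in sequence since each builds on the previous one. For part (a), the point is that $B$ is defined by two equations over $\cO$ in a polynomial ring; after eliminating $y$ via the first relation $Q - \pi^s y$ (which expresses $y$ in terms of the $x_i$ once $s = 0$, but more robustly we should keep $B$ as a quotient of $\cO[x_1,x_2,x_3,y]$), one checks that $B$ has no $\pi$-torsion. Concretely, I would argue that $\cO[x_1,x_2,x_3,y]/(Q-\pi^s y)$ is $\cO$-flat (it is a free $\cO$-module on an explicit monomial basis, since $Q - \pi^s y$ is monic of degree $1$ in $y$ up to the $\pi^s$ — one must be slightly careful when $s>0$, but the quotient is still $\cO$-free on monomials not involving $y$ to power $\ge 1$ after reorganizing, or one simply notes that the reduction mod $\pi$ of the relation is either $\bar Q$ or $-\bar y\cdot(\text{unit})$ and in both cases the quotient stays flat). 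Then quotienting further by $y^2 + \pi^\epsilon T$: since $y^2+\pi^\epsilon T$ is monic of degree $2$ in $y$, the ring $B$ is a free module of rank $2$ over $\cO[x_1,x_2,x_3,y]/(Q-\pi^s y)$, hence $\cO$-flat. Flatness over a DVR is equivalent to torsion-freeness, and this monicity argument delivers it cleanly.

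For part (b), I would invoke Serre's criterion: a Noetherian ring is normal iff it satisfies $(R_1)$ and $(S_2)$. Since $B$ is a complete intersection over $\cO$ (two equations, and by part (a) it is $\cO$-flat of the expected dimension), it is Cohen--Macaulay, so $(S_2)$ is automatic; the work is entirely in $(R_1)$, i.e.\ regularity in codimension $\le 1$. The codimension-$0$ primes of $B$ all lie over the generic point of $\cO$ (by flatness and the fact that $B\otimes k$ has lower dimension), and there $B\otimes K$ is normal by hypothesis, hence regular in codimension $\le 1$. For codimension-$1$ primes $\p$ containing $\pi$: such a $\p$ is a minimal prime of $B/\pi B = B\otimes k$. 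When $\epsilon = 0$, the hypothesis that $B\otimes k$ is reduced means $B\otimes k$ is $(R_0)$ as well, and generic reducedness at $\p$ plus the fact that $\pi$ is a nonzerodivisor (flatness) lets one lift regularity: $B_\p$ is regular because $\pi$ generates the maximal ideal modulo a regular system of parameters of the regular local ring $(B\otimes k)_\p$. When $\epsilon = 1$, the condition $T \notin (\pi, Q, y)$ guarantees that at any height-$1$ prime $\p \ni \pi$ of $B$, at least one of $\partial/\partial(\text{something})$ of the defining equations is a unit — more precisely, reducing mod $\pi$ the two relations become $\bar Q - 0$ (if $s>0$, the relation $Q = \pi^s y$ degenerates to $\bar Q = 0$, contributing the conic) and $\bar y^2 = 0$, so $B\otimes k \cong k[x_1,x_2,x_3]/(\bar Q)[y]/(y^2)$ which is nonreduced; one shows $B$ is nonetheless regular at each height-$1$ prime over $\pi$ by exhibiting $\pi$ itself, or $y$, as part of a regular system of parameters, using precisely that $T$ is a unit modulo $(\pi, Q, y)$ at that prime so that $y^2 + \pi T$ has nonvanishing differential there. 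The case $s = 0$ (so $y \equiv Q/1$ is eliminated and $\epsilon = 1$, forcing $2s+\epsilon = 1 > 0$) reduces to showing $\cO[x_1,x_2,x_3]/(Q^2 + \pi T')$-type rings are normal, again via the Jacobian/$T$-condition.

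For part (c), the additional hypothesis says that in $K[x_1,x_2,x_3,y]/(Q-\pi^s y)$ the octic-type relation $y^2 + \pi^\epsilon T$ agrees with $F$ up to a nonzero scalar in $K$; hence on generic fibers $\Proj(B\otimes K) \cong \Proj(\cO[x_1,x_2,x_3]/(F))\otimes K = C$. By part (b), $\Proj B$ is normal, and it is projective and finite-type over $\cO$, flat by (a), with generic fiber $C$. One then checks $\Proj B \to \cZ = V_+(F) \subset \P^2_\cO$ is a finite birational morphism: finiteness because $B$ is module-finite over the subring generated by $x_1,x_2,x_3$ (as $y$ satisfies $y^2 + \pi^\epsilon T = 0$ and $Q - \pi^s y = 0$), and birationality because it is an isomorphism on generic fibers. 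Since $\Proj B$ is normal, Fact~\ref{fact} applies verbatim and gives $\Proj B \cong \cC$, the normalization of $\cZ$, which we identified with the good-reduction model at the start of the subsection.

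The main obstacle I anticipate is part (b), specifically the $(R_1)$ check at height-$1$ primes lying over $\pi$ in the nonreduced case $\epsilon = 1$: one must translate the coordinate-free condition ``$T \notin (\pi, Q, y)$'' into the statement that the local ring $B_\p$ is regular, which requires carefully choosing, at each such $\p$, which of $\pi$, $y$, or a coordinate function to use as the non-square-free parameter, and verifying the Jacobian of the two defining relations has the right rank. Keeping track of the two sub-cases $s = 0$ versus $s > 0$ (where the relation $Q - \pi^s y$ behaves differently modulo $\pi$) adds bookkeeping but no conceptual difficulty.
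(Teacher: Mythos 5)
Your overall strategy is the same as the paper's: (a) flatness via $\pi$-torsion-freeness, (b) Serre's criterion with the local-complete-intersection property supplying $(S_2)$ and a case analysis at the height-one primes containing $\pi$, and (c) a finite birational morphism onto $\cZ=V_+(F)$ followed by Fact~\ref{fact}. Part (c) in particular matches the paper essentially verbatim. Two of your local justifications, however, do not work as stated.

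In (a), the assertion that $B$ is ``a free module of rank $2$ over $\cO[x_1,x_2,x_3,y]/(Q-\pi^s y)$'' is false: $B$ is a \emph{quotient} of that ring, not a module of positive rank over it, and over $\cO[x_1,x_2,x_3]$ it is a torsion module (it is killed by the determinant of the $2\times 2$ presentation matrix of $(1,y)$, which is $F$ up to the normalization of (c)). So monicity of $y^2+\pi^{\epsilon}T$ in $y$ does not by itself give flatness; one really must do the computation the paper alludes to, namely that a relation $\pi\tilde b\in(Q-\pi^s y,\,y^2+\pi^{\epsilon}T)$ forces $\tilde b$ into that ideal, using that the reductions mod $\pi$ of the two generators (namely $q$ and $y^2+\bar T$ when $s>0,\epsilon=0$, resp.\ involving $y$ when $\epsilon=1$) form a regular sequence. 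In (b) with $\epsilon=1$, a Jacobian/differential argument is not available: every partial derivative of $y^2+\pi T$ lies in $(\pi,y)$, so the differential \emph{does} vanish along $\p=(\pi,Q,y)$; indeed the special fiber is genuinely non-reduced there. The correct use of the hypothesis $T\notin(\pi,Q,y)$ is that $T$ becomes a unit in $B_\p$, so the relation $y^2=-\pi T$ gives $\pi=-y^2T^{-1}\in(y^2)B_\p$, whence $\p B_\p=(\pi,y)B_\p=(y)B_\p$ is principal and $B_\p$ is a discrete valuation ring. (Similarly, in the $\epsilon=0$ case the point is simply that $(B\otimes k)_\p$ is a reduced artinian local ring, i.e.\ a field, so $\p B_\p=(\pi)$.) With these two repairs your argument is the paper's.
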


\begin{proof} (a) The flatness is equivalent to $B$ being
  $\pi$-torsion-free. The latter follows from a straightforward computation.

  (b) As $\dim B\otimes K=\dim B\otimes k=2$, $\Spec B$ is equidimensional. It
  is then easy to see that $B$ is a local complete intersection
  (\cite{liu-book}, Exercise 6.3.4(c)).  By Serre's criterion
  (\cite{liu-book}, Corollary 8.2.24), it is then enough to check that $B_\p$
  is normal for height $1$ prime ideals $\p$ of $B$. This follows from the
  hypothesis on $B\otimes K$ if $\pi\notin\p$.

  Suppose that $\pi\in \p$. Then $\p$ is a minimal prime ideal over $\pi
  B$. Let us show that $B_\p$ is a discrete valuation ring, or equivalently,
  that $\p B_\p$ is principal.  If $\epsilon=0$, as $B\otimes k$ is reduced,
  $\p B_\p$ is generated by $\pi$.  If $\epsilon=1$, then $\p=(\pi, y)$ is the
  unique prime ideal over $\pi B$.  This prime ideal is the image in $B$ of
  the prime ideal $(\pi, Q, y)$ of $\cO[x_1, x_2, x_3,y]$.  By hypothesis,
  $T\notin \p$ in $B$, therefore $\p B_\p$ is generated by $y$.

  (c) The canonical map $\cO[x_1, x_2, x_3] \to B$ is finite. The hypothesis
  on $F$ easily implies that $F$ belongs to the ideal
  $(Q-\pi^sy, y^2+\pi^{\epsilon}T)$ in $\cO[x_1, x_2, x_3, y]$.  So we get a
  canonical map $\phi : \cO[x_1,x_2,x_3]/(F)\to B$ which is an isomorphism
  after tensoring with $K$, thus $\phi$ is finite and injective and the
  corresponding morphism $\Proj B\to \cZ$ is finite and birational. We
  conclude by Fact~\ref{fact}.
\end{proof}

\begin{remark}\label{rk:singular} When $C$ has irreducible stable (not
  necessarily smooth) reduction over $K$, the situation is very similar to the
  good reduction case. Let $\cC$ be the stable model of $C$ over $\cO$.  Then
  $\omega_{\cC/\cO}$ is still base-point free and induces a morphism
$$ f: \cC \to \P(H^0(\cC, \omega_{\cC/\cO}))\simeq \P^2_\cO. $$
The reduced image $\cZ=f(\cC)$ is a quartic and $f: \cC\to \cZ$ is a
birational finite morphism, equal to the normalization morphism. If
$\omega_{\cC/\cO}$ is very ample, then $\cC\simeq \cZ$ is a stable quartic in
$\P^2_\cO$.  If $\omega_{\cC/\cO}$ is not very ample, then one can show that
$(\cZ_k)_{\mathrm{red}}$ is a smooth conic over $k$ and
$f_k : \cC_k \to (\cZ_k)_{\mathrm{red}}$ is a finite generically \'etale
morphism of degree $2$. The reduction is a singular hyperelliptic curve.
\end{remark}

Next we examine when the special fiber of $\Proj B$ is reduced or semi-stable
(in the Deligne-Mumford sense).  A point in a scheme over $k$ is said to be
\emph{separable over $k$} if its residue field is a separable extension of
$k$.

\begin{lemma} \label{lm:reduced} Suppose $\caract(k)\ne 2$. Let
  $g\in k[x_1, x_2, x_3]$ be a quartic form and let
$$W=\Proj (k[x_1,x_2,x_3,y]/(q, y^2+g))$$
with $\deg x_i=1$ and $\deg y=2$. Then the following properties are true:
\begin{enumerate}[label=\emph{(\alph*)}]
 \item  $W$ is reduced if and only if $g\not\equiv 0 \mod q$;
 \item   $W$ is semi-stable (respectively smooth) if and only if the curves
 $q=0$ and $g=0$ in $\P^2_k$ intersect at separable points over $k$,
 with multiplicity at most $2$ (respectively equal to $1$).
\end{enumerate}
\end{lemma}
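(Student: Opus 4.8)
The plan is to work affine-locally on the conic $q=0$ and reduce both statements to the behaviour of the double cover $y^2 = g$ along $q=0$. Since $\caract(k)\ne 2$, after a linear change of coordinates we may assume $q = x_1 x_3 - x_2^2$ (or $q=x_2^2-x_1x_3$), so that $V_+(q)\subset\P^2_k$ is a smooth conic, hence isomorphic to $\P^1_k$ via the usual parametrization $(s^2:st:t^2)$. The scheme $W$ is finite of degree $2$ over this conic, and away from the locus $\{q=0\}$ (where $y$ is invertible on $W$) the ring $k[x_1,x_2,x_3,y]/(q,y^2+g)$ is simply... wait, one must be careful: $W$ genuinely lives over the conic, i.e. $W=\Spec$ of a rank-$2$ algebra over $\cO_{V_+(q)}$, namely $\cO_{V_+(q)}[y]/(y^2+\bar g)$ where $\bar g$ is the restriction of $g$ to the conic, a section of $\cO_{\P^1}(4)$, i.e. a binary octic form $f_8(s,t)$ in the parameter.

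First I would prove (a). The algebra $\cO_{\P^1}[y]/(y^2+f_8)$ is reduced if and only if $y^2+f_8$ has no repeated factor as a polynomial in $y$ over the function field, i.e. if and only if $f_8\not\equiv 0$; globally, $W$ is reduced iff $f_8$ is not identically zero on every chart, which is exactly the condition $\bar g\ne 0$ on the conic, i.e. $g\not\equiv 0\bmod q$ (here one uses that $q$ is irreducible, so $q\mid g$ in $k[x_1,x_2,x_3]$ iff $g$ vanishes on $V_+(q)$). This direction is routine; the only subtlety is checking reducedness at the non-trivial points, which is immediate since there $y$ is a unit and the algebra is étale of degree $2$ over a reduced base (as $\caract k\ne 2$).

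For (b), the key observation is that $W\to V_+(q)\simeq\P^1_k$ is a degree-$2$ cover branched exactly where $f_8$ vanishes, with the local structure at a point of the conic determined by the order of vanishing $m$ of $f_8$ there. A standard local computation (completing the square, since $2$ is invertible) shows that near such a point $W$ has the form $y^2 = u\cdot t^m$ with $u$ a unit, whose normalization / singularity type is: smooth if $m\le 1$; a node ($A_1$) if $m=2$; and a worse (non-semi-stable) singularity $A_{m-1}$ if $m\ge 3$. Translating $m$ back: the order of vanishing of $f_8$ at a point of the conic corresponding to a $k$-point equals the intersection multiplicity $I_P(q,g)$; for a closed point with residue field $k'$, one must moreover track whether $k'/k$ is separable, because an inseparable residue extension produces a non-reduced or non-smooth fiber even when the naive multiplicity is $1$ — this is exactly where the ``separable points'' hypothesis enters. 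So $W$ is semi-stable iff every intersection point of $q=0$ and $g=0$ is separable over $k$ and has $I_P(q,g)\le 2$, and smooth iff every such point is separable with $I_P(q,g)=1$ (equivalently $q=0$ and $g=0$ meet transversally at separable points). One also checks $W$ is connected and of the right arithmetic genus so that ``semi-stable'' in the Deligne–Mumford sense is equivalent to having at worst nodal singularities; since $W$ is a curve, there are no further global conditions to verify.

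The main obstacle is the bookkeeping at closed points with inseparable residue field: one has to be precise about what ``intersection multiplicity'' and ``$y^2=u t^m$'' mean when the ground field is not the residue field, and to show that inseparability of the residue extension forces $W$ to fail to be smooth (resp. semi-stable) even in multiplicity-one situations — concretely, if $k'/k$ is inseparable then $\Spec k'$ itself is non-smooth over $k$, so the fiber of $W$ there cannot be smooth, and a short argument with the different of $k'/k$ handles the semi-stable case. Everything else — the affine-local reduction to a binary octic, the classification of $y^2=t^m$ singularities, and the dictionary between vanishing order and intersection multiplicity — is standard and I would only sketch it.
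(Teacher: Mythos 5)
Your overall strategy is the same as the paper's: view $W$ as a degree-$2$ cover $\phi\colon W\to V_+(q)$ of the smooth conic, prove (a) by passing to the function field of the conic, and prove (b) via the local model $y^2=u\,t^m$ at a branch point, where $m$ is the intersection multiplicity ($m=1$ smooth, $m=2$ node, $m\ge 3$ worse). Part (a) is essentially the paper's argument (which works with $A=k[x_1,x_2,x_3]/(q)$ and $B_k=A\oplus Ay\subset \Frac(A)[y]/(y^2+g)$ directly, thereby avoiding your implicit identification of the conic with $\P^1_k$, which would require a rational point; this is cosmetic).

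The genuine gap is your treatment of inseparable intersection points in (b), which you yourself flag as ``the main obstacle'' but then resolve incorrectly. The claim ``if $k'/k$ is inseparable then $\Spec k'$ itself is non-smooth over $k$, so the fiber of $W$ there cannot be smooth'' does not prove what is needed: smoothness of $W$ over $k$ at $w_0$ is not smoothness of the scheme-theoretic fiber $\phi^{-1}(z_0)$, nor of $\Spec k(w_0)$, over $k$. A smooth $k$-curve has plenty of closed points with inseparable residue fields (e.g. $\A^1_{\F_p(a)}$ at $t^p=a$), so non-smoothness of $\Spec k(w_0)$ over $k$ rules out nothing by itself. What must actually be shown is that a \emph{ramification} point of the (tame, since $p\neq 2$) cover $\phi$ at which $W$ is smooth, resp.\ has an ordinary double point, necessarily has separable residue field; this is not formal, and it is precisely what the paper outsources to \cite{LiLo}, Lemma 3.3 (smooth case) and \cite{liu-book}, Proposition 10.3.7(b) (nodal case). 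Alternatively, your base-change idea can be made to work quantitatively: if $k(z_0)/k$ is inseparable, then at each geometric point of $V_+(q)_{\bar k}$ above $z_0$ the restriction of $g$ vanishes to order equal to the multiplicity times the inseparability degree, which is $\ge p\ge 3$, so $W_{\bar k}$ acquires an $A_{\ge p-1}$ singularity and is neither smooth nor semi-stable there. The appeal to ``a short argument with the different of $k'/k$'' is not a substitute for either of these; as written, this step of the proposal fails.
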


\begin{proof} (a) The condition is clearly necessary. Suppose that the
  condition is satisfied. Let us prove that $B_k:=k[x_1,x_2,x_3,y]/(q, y^2+g)$
  is reduced. Let $A=k[x_1,x_2,x_3]/(q)$. This is an integrally closed domain
  and we have $B_k=A\oplus Ay$.  By hypothesis $g\ne 0$ in $A$. So
  $\Frac(A)[y]/(y^2+g)$ is separable over $\Frac(A)$. Hence,
  $$B_k=A\oplus Ay\subset \Frac(A)\oplus \Frac(A)y=\Frac(A)[y]/(y^2+g)$$ is reduced.

  (b) The natural projection $\phi: W\to V_+(q)$ is finite and \'etale away
  from the subscheme $D:=V_+(q, g)$. Let $w_0\in W$ be a point lying over some
  point of $D$. Then $\phi$ is ramified at $w_0$. Let us show that $w_0$ is a
  separable point over $k$. When $W$ is smooth at $w_0$, this is proved for
  instance in \cite{LiLo}, Lemma 3.3. The case when $W$ is singular at $w_0$
  is proved in \cite{liu-book}, Proposition 10.3.7(b).

  For the rest of (b), we can suppose that $k$ is algebraically closed.  A
  local equation of $W$ at a point lying over an intersection point $z\in D$
  is $y^2=x^ra$ where $r$ is the intersection number at $z$ and
  $a\in \cO_{V_+(q), z}^{*}$. The condition for $W$ to be semi-stable or
  smooth is then clear.
\end{proof}

\begin{lemma} \label{lm:reduced2} Let $\caract(k)=2$. Let
  $h, g\in k[x_1, x_2, x_3]$ be respectively a quadratic and a quartic
  forms. Let
  $$B_k=k[x_1,x_2,x_3,y]/(q, y^2+hy+g)$$
  and let $W=\Proj B_k$ with $\deg x_i=1$ and $\deg y=2$. Then the following
  properties are true:
  \begin{enumerate}[label=\emph{(\alph*)}]
  \item $W$ is reduced if and only if $h$ is prime to $q$ or, $h$ is divisible
    by $q$ and $g$ is not a square mod $q$;
  \item if $h$ is divisible by $q$, then $W$ is not semi-stable;
  \item if the curves $V_+(q)$ and $V_+(h)$ intersect transversely at
    separable points of $\P^2_k$, then $W$ is semi-stable.
  \end{enumerate}
\end{lemma}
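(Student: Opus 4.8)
The plan is to argue along the lines of the proof of Lemma~\ref{lm:reduced}. Write $A=k[x_1,x_2,x_3]/(q)$ for the homogeneous coordinate ring of the smooth conic $V_+(q)$; it is an integrally closed domain (and stays one over $\bar k$), and $B_k=A\oplus Ay$ is a free $A$-module of rank $2$ on which $y^2=hy+g$ (recall $\caract(k)=2$). For (a), embed $B_k=A[y]/(y^2+hy+g)$ into $\Frac(A)[y]/(y^2+hy+g)$, which is legitimate since the relation is monic in $y$. If $h$ is prime to $q$, then $h\ne 0$ in $A$, so $y^2+hy+g$ has nonzero derivative $h$, the right-hand algebra is \'etale over $\Frac(A)$, hence reduced, hence so are $B_k$ and $W$. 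If $q\mid h$, then $h=0$ in $A$ and $B_k=A[y]/(y^2+g)$; as $A$ is integrally closed, $g$ is a square in $A$ iff it is a square in $\Frac(A)$. When $g\equiv g_0^2\bmod q$ one has $y^2+g=(y+g_0)^2$, so $y+g_0$ is a nonzero nilpotent of $B_k$, visibly nonzero on each chart $D_+(x_i)$, whence $W$ is non-reduced; when $g$ is not a square mod $q$, the quadratic $y^2+g$ has no root in $\Frac(A)$ (characteristic $2$), hence is irreducible, $\Frac(A)[y]/(y^2+g)$ is a field, and $W$ is reduced. This yields the equivalence in (a).

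For (b), suppose $q\mid h$. If $W$ is non-reduced we are done; otherwise, by (a), $g$ is not a square mod $q$ and $B_k=A[y]/(y^2+g)$ is a domain, so $W$ is an integral projective curve, which we view over $\bar k$. Here $\Frac(B_k)^2\subseteq\Frac(A)$, and since $Z_0:=V_+(q)\cong\P^1$ is normal, both $\phi\colon W\to Z_0$ and the normalization composite $\widetilde W\to W\to Z_0$ have exactly one point above each point of $Z_0$; hence $\widetilde W\to W$ is bijective and $W$ is unibranch at every point. On the other hand, $\phi_*\O_W=\O_{Z_0}\oplus\O_{Z_0}(-2)=\O_{\P^1}\oplus\O_{\P^1}(-4)$ (the generator $y$ has degree $2$ and $\O_{Z_0}(1)\cong\O_{\P^1}(2)$), so $p_a(W)=1-\chi(\O_W)=3$; in particular $W\neq\widetilde W$, i.e.\ $W$ is singular. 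A reduced curve that is singular and unibranch at a point is not semi-stable, a node having two branches; hence $W$ is not semi-stable.

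For (c), the hypothesis forces $q\nmid h$, so $\phi\colon W\to Z_0:=V_+(q)$ is finite flat of degree $2$; locally the equation $y^2+\bar h y+\bar g$ has derivative $\bar h$, so $\phi$ is \'etale away from $D:=V_+(q,h)$ and $W$ is smooth there because $Z_0$ is. After verifying, as in Lemma~\ref{lm:reduced}(b) and with the same references, that the points of $W$ above $D$ are separable over $k$, we may take $k=\bar k$ and must check that $W$ is smooth or nodal above each $z_0\in D$. Let $t$ be a local coordinate of $Z_0$ at $z_0$; transversality gives $\bar h=t\,u$ with $u(0)\ne 0$, and there is a unique point $w_0$ above $z_0$, with $y$-coordinate $y_0=\bar g(0)^{1/2}$. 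Putting $Y=y+y_0$ and using $\bar g(t)-\bar g(0)\in t\,k[[t]]$, the local equation of $W$ at $w_0$ becomes $Y^2+t\,uY+t^{r}v=0$ with $v(0)\ne 0$ and $r\ge 1$. If $r=1$ the Jacobian criterion gives that $W$ is smooth at $w_0$. If $r\ge 2$, the substitution $Y=tZ$ turns the equation into $Z^2+uZ+t^{r-2}v=0$, whose reduction mod $t$ is separable since $u(0)\ne 0$; by Hensel it factors as $(Z-\eta_1)(Z-\eta_2)$ with $\eta_i\in k[[t]]$ and $\eta_1(0)\ne\eta_2(0)$, so near $w_0$ the local equation of $W$ factors as $(Y-t\eta_1)(Y-t\eta_2)$, the union of two smooth branches with distinct tangent directions, i.e.\ an ordinary node. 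In every case $W$ is semi-stable.

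I expect (b) to be the main obstacle: one must rule out simultaneously non-reducedness and every non-nodal reduced singularity, and the efficient way to do so --- observing that a purely inseparable double cover of $\P^1$ in characteristic $2$ is unibranch at every point, while the computation $p_a(W)=3>0$ forces at least one singularity --- is the step needing care. Parts (a) and (c) are the characteristic-$2$ counterpart of Lemma~\ref{lm:reduced}, the only new ingredients being the Artin--Schreier factorization that detects the node in (c) and the separability bookkeeping, imported verbatim from Lemma~\ref{lm:reduced}.
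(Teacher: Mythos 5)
Your proof is correct and follows essentially the same route as the paper's: for (a) the embedding $B_k=A\oplus Ay\hookrightarrow \Frac(A)[y]/(y^2+hy+g)$ with $A=k[x_1,x_2,x_3]/(q)$ integrally closed, for (b) the observation that a purely inseparable double cover of the conic is unibranch (so all singularities are cuspidal) combined with $p_a(W)=3$ against geometric genus $0$, and for (c) the local equation $y^2+tu(t)y+t^rf(t)$ at the intersection points with the dichotomy $r=1$ (smooth) versus $r\ge 2$ (node). You merely fill in details the paper leaves implicit, namely the computation of $p_a(W)$ via $\phi_*\O_W=\O_{\P^1}\oplus\O_{\P^1}(-4)$ and the Hensel/Artin--Schreier factorization showing that $r\ge 2$ indeed produces an ordinary node.
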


\begin{proof}
  (a) The condition is clearly necessary. Let us show the converse. Let $A$ be
  defined as in the proof of Lemma~\ref{lm:reduced}(a).  If $h$ is prime to
  $q$, then the proof is similar to that of Lemma~\ref{lm:reduced}(a).
  Suppose now that $h \equiv 0 \pmod{q}$. If $b=a_0+a_1y\ne 0$ is nilpotent in
  $B_k$, then so is $b^2=a_0^2+a_1^2g\in A$. Therefore $a_0^2+a_1^2g=0$ in
  $A$. As the latter is integrally closed, $a_0/a_1\in A$ and $g$ is a square
  in $A$, contradiction.

  (b) Denote by $\phi: W\to Z:=V_+(q)$ the natural projection. If $h$ is
  divisible by $q$, then $\phi$ is purely inseparable, so the singularities of
  $W$ are all cuspidal. On the other hand, $W$ is not smooth as its arithmetic
  genus is $3$ while its geometric genus is $0$.  So $W$ is not semi-stable.

  (c) We can suppose $k$ is separably closed. The above cover $W\to V_+(q)$ is
  \'etale away from $D$. So $W\setminus \phi^{-1}(D)$ is smooth. The local
  equation of $W$ at a point lying over $z\in D$ is $y^2+tu(t)y+g(t)$ where
  $t$ is a local parameter of $Z\simeq \P^1_k$ at $z$, with $u(0)\ne 0$. After
  a translation of $y$ by some constant in $\ol{k}$, the local equation has
  the shape $y^2+tu(t)y+t^rf(t)$ with $r\ge 1$ and $f(0)\ne 0$. If $r=1$, the
  point is smooth, otherwise it is a node.
\end{proof}

\begin{remark}
  The condition in (c) is not necessary. We can have inseparable
  non-transverse intersection points with $W$ smooth.
\end{remark}

\begin{remark} The smoothness condition is less easy to express than in the
  $\caract(k)\ne 2$ case. Keep the notation of the proof of Part (b). Let
  $w_0\in W$. We know that $w_0$ is a smooth point if
  $w_0\notin \phi^{-1}(D)$. Suppose that $\phi(w_0)=z_0\in D$. The Jacobian
  criterion implies that $W$ is smooth at $w_0$ if and only if the matrix
  $$
  \left(
    \begin{matrix}
      \partial q/\partial x_1\hfill && \partial q/\partial x_2 \hfill && \partial
      q/\partial x_3 \\
      y_0\partial h/\partial x_1+\partial g/\partial x_1 &&
      y_0\partial h/\partial x_2+\partial g/\partial x_2 &&
      y_0\partial h/\partial x_3+\partial g/\partial x_3
    \end{matrix}
  \right),
  $$
  where $y_0\in \bar{k}$ is a square root of $g(z_0)$, has rank 2 at
  $z_0$.
\end{remark}

\subsection{Proof of Theorem~\ref{th:goodmodel}}

In this subsection we prove the following result.

\begin{theorem} \label{th:goodmodel-f} Let $K$ be a discrete valuation field
  with valuation $v$, valuation ring $\O$ and a uniformizer $\pi$. Let
  $k=\Oc/\langle \pi \rangle$ be the residue field of characteristic
  $p \ne 2$. Suppose that the smooth plane quartic $C$ has good hyperelliptic
  reduction over $K$.  Fix $q$ and a defining equation $F$ of $C$ as in
  formulas~\eqref{eq:conic} and \eqref{eq:Fq}. Then there exist homogeneous
  polynomials $Q, \ G\in \cO[x_1,x_2,x_3]$ of degrees $2, 4$ respectively, and
  a positive integer $s$ such that
  \begin{enumerate}[label=\emph{(\alph*)}]
  \item $F=Q^2+\pi^{2s} G$ and $\ol{G}\ne 0$;
  \item The smooth projective model  $\cC$ of $C$ over $\cO$ is
    $$\Proj \left(\cO[x_1, x_2, x_3, y]/(Q-\pi^s y, y^2+G) \right)$$
    in the weighted projective space $\P^{(1,1,1,2)}_{\cO}$.
  \item The conic $\ol{Q}=0$ and the quartic $\overline{G}=0$ in $\P^2_k$
    intersect transversely at an effective divisor of degree $8$ over $k$;
  \end{enumerate}
\end{theorem}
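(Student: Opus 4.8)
The plan is to start from the geometric analysis preceding the theorem. We already know that $C$ has good reduction, so the smooth model $\cC/\cO$ exists, and when $\omega_{\cC/\cO}$ is not very ample we have the finite degree-$2$ morphism $f_k:\cC_k\to (\cZ_k)_{\mathrm{red}}=V_+(q)$, a smooth conic; moreover $\cZ=V_+(F)$ with $\ol F=q^2$, and by \eqref{eq:F-Q2} we may write $F=Q^2+\pi^r G$ with $\ol Q=q$, $\ol G\ne 0$, and $r>0$ chosen as large as possible over all liftings of $q$. The first step is to feed this into Lemma~\ref{normality-proj}: set $B=\cO[x_1,x_2,x_3,y]/(Q-\pi^{s}y,\,y^2+\pi^{\epsilon}T)$ for appropriate $s,\epsilon,T$ extracted from the identity $F=Q^2+\pi^r G$, noting that $Q^2\equiv \pi^{2s}y^2$ modulo $Q-\pi^s y$ — so that one is led to $F\equiv \pi^{2s}(y^2+\pi^{r-2s}G)$ (after possibly splitting $r$ into its even part). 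Part (c) of Lemma~\ref{normality-proj} then identifies $\Proj B$ with $\cC$ once $B\otimes K$ is normal (which it is, being birational to the smooth curve $C$) and the special-fiber hypothesis of part (b) holds.

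The heart of the argument is to show that, among all the liftings $Q$ of $q$ and the resulting exponents $r$, one can arrange that $r=2s$ is \emph{even} and that the residual quartic behaves well. Here is where the maximality of $r$ is used. If $r$ were odd, the local model on the special fiber would be $y^2+\pi T$-type with $\epsilon=1$; one checks via Lemma~\ref{normality-proj}(b) (the $\epsilon=1$ branch, $\p=(\pi,y)$) whether $B$ is still normal, and if so $\Proj B\simeq \cC$ forces $\cC_k$ to be a double conic or worse, contradicting that $\cC_k$ is a smooth hyperelliptic curve mapping $2:1$ to $V_+(q)$ with $\ol G\ne 0$. The clean way to phrase this: write $r=2s+\epsilon$ with $\epsilon\in\{0,1\}$; the special fiber of the candidate model is $\Proj(k[x_1,x_2,x_3,y]/(q,\,y^2+\pi^{\epsilon}\,\overline{\cdots}))$, and since $\cC_k$ is reduced (indeed smooth), Lemma~\ref{lm:reduced}(a) forces the relevant residual quartic $\ol G$ to satisfy $\ol G\not\equiv 0\bmod q$; combined with the maximality of $r$ and the fact that $\cC_k\to V_+(q)$ is genuinely degree $2$ (not purely inseparable, as $\caract k\ne 2$), one rules out $\epsilon=1$. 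This yields (a) with $r=2s$. Then (b) is just Lemma~\ref{normality-proj}(c) applied to $B=\cO[x_1,x_2,x_3,y]/(Q-\pi^s y,\,y^2+G)$ once we know $B\otimes k$ is reduced — which is Lemma~\ref{lm:reduced}(a) again, $\ol G\not\equiv0\bmod q$. Finally (c): since the special fiber $\cC_k=\Proj(k[x_1,x_2,x_3,y]/(q,y^2+\ol G))$ is \emph{smooth} by hypothesis, Lemma~\ref{lm:reduced}(b) says precisely that $V_+(q)$ and $V_+(\ol G)$ meet at separable points with multiplicity $1$, i.e.\ transversely in an effective degree-$8$ divisor (degree $8=2\cdot 4$ by Bézout).

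I expect the main obstacle to be the bookkeeping in the first two steps: extracting $Q$, $s$, $T$ from $F=Q^2+\pi^r G$ compatibly with the hypotheses of Lemma~\ref{normality-proj} (in particular ensuring $T\in\cO[x_1,x_2,x_3]+\cO[x_1,x_2,x_3]y$ and $2s+\epsilon>0$), and then genuinely proving that the maximality of $r$ together with smoothness of $\cC_k$ forces $\epsilon=0$ rather than merely being consistent with it. The subtlety is that a priori $r$ could be odd with $G$ itself divisible by $q$ modulo $\pi$; one must show this does not happen by playing the normalization identity $\Proj B\simeq\cC$ against the known structure of $\cC_k$. Once $\epsilon=0$ is secured, everything else is a direct citation of the preceding lemmas, with Bézout supplying the degree $8$ in (c).
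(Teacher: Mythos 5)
Your architecture matches the paper's: take $F=Q^2+\pi^rG$ with $r$ maximal over all lifts $Q$ of $q$, write $r=2s+\epsilon$, form $\mathcal{W}=\Proj\left(\mathcal{O}[x_1,x_2,x_3,y]/(Q-\pi^s y,\,y^2+\pi^{\epsilon}G)\right)$, rule out $\epsilon=1$ because the special fiber would be non-reduced, then get (b) from Lemma~\ref{normality-proj}(c) and (c) from Lemma~\ref{lm:reduced}(b). The one genuine gap is exactly the point you flag at the end: you never prove that $\overline{G}$ is prime to $q$, and the route you sketch for it is circular. Both branches of your argument need this fact \emph{before} Lemma~\ref{normality-proj} can be invoked: for $\epsilon=1$ the normality criterion in part (b) requires $G\notin(\pi,Q,y)$, and for $\epsilon=0$ it requires $B\otimes k$ to be reduced, which by Lemma~\ref{lm:reduced}(a) is precisely $\overline{G}\not\equiv 0\bmod q$. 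If $\overline{G}$ were divisible by $q$, the ring $B$ need not be normal, $\Proj B$ need not be $\mathcal{C}$, and its special fiber need not be $\mathcal{C}_k$; so ``playing the normalization identity $\Proj B\simeq\mathcal{C}$ against the known structure of $\mathcal{C}_k$'' is not available --- that identity is the conclusion you are trying to reach, not a tool you may assume.

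The missing idea is elementary and is the second place where $\mathrm{char}(k)\ne 2$ enters (besides Lemma~\ref{lm:reduced}): if $\overline{G}\equiv 0\bmod q$, write $G=QR+\pi G_1$ with $R$ quadratic and $G_1$ quartic; then
$$F-\left(Q+\tfrac{1}{2}\pi^{r}R\right)^{2}=\pi^{r+1}G_1-\tfrac{1}{4}\pi^{2r}R^{2}\equiv 0 \bmod{\pi^{r+1}}$$
(as $r\ge 1$), and $Q+\tfrac{1}{2}\pi^{r}R$ is again a lift of $q$, contradicting the maximality of $r$. With this claim in hand your argument closes essentially as written: for $\epsilon=1$, Lemma~\ref{normality-proj} gives $\mathcal{W}\simeq\mathcal{C}$ smooth while $\mathcal{W}_k$ is non-reduced, absurd; for $\epsilon=0$, Lemma~\ref{lm:reduced}(a) gives reducedness of $\mathcal{W}_k$, Lemma~\ref{normality-proj}(c) gives $\mathcal{W}\simeq\mathcal{C}$, and Lemma~\ref{lm:reduced}(b) gives (c).
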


\begin{proof} Let $Q, G$ be as in equation~\eqref{eq:F-Q2} with the biggest
  possible $r$.  We claim that $\overline{G}$ is prime to $q$. Otherwise,
  $G=QR+\pi G_1$ with $R, G_1$ homogeneous of degrees $2$ and $4$
  respectively.  Then the polynomial $F-(Q+\pi^rR/2)^2$ is divisible by
  $\pi^{r+1}$. Contradiction with the assumption on $r$.

  We now prove the theorem. Let $s=\lfloor r/2 \rfloor$ and
  $\epsilon=r-2s\in \{0,1\}$. Let
  $$\W=\Proj\left(\cO[x_1, x_2, x_3, y]/(Q-\pi^s y, y^2+\pi^{\epsilon} G)\right).$$
  If $\epsilon=1$, as $G\notin (\pi, Q,y)$, Lemma~\ref{normality-proj} implies
  that $\W$ is normal and isomorphic to $\cC$, hence smooth. This is absurd as
  the special fiber of $\W$ is non-reduced. So $\epsilon=0$.  The special
  fiber $\mathcal W_k$ is reduced by Lemma~\ref{lm:reduced} (a). Again by
  Lemma~\ref{normality-proj}(c), ${\W}\simeq \cC$.  Part (c) is just
  Lemma~\ref{lm:reduced}(b).
\end{proof}

\subsection{Good hyperelliptic reduction in $\caract(k)=2$}
When the characteristic of $k$ is equal to $2$, \PHM s have singular close
fibers. However, we still have an analog result to Theorem~\ref{th:goodmodel}
in this case.

\begin{theorem} \label{th:is2} Let $K$ be a discrete valuation field with
  valuation $v$, valuation ring $\O$ and a uniformizer $\pi$. Let
  $k=\Oc/\langle \pi \rangle$ be the residue field of characteristic $p =
  2$. Suppose that the smooth plane quartic $C$ has good hyperelliptic
  reduction over $K$.  Fix $q$ and a definition equation $F$ of $C$ as in
  formulas~\eqref{eq:conic} and \eqref{eq:Fq}.  Then there exist homogeneous
  polynomials
  $$Q, H, G\in \cO[x_1,x_2,x_3]$$
  of degrees $2, 2, 4$ respectively, $u\in 1+\pi \cO$ and a positive integer
  $s$ such that
  \begin{enumerate}[label=\emph{(\alph*)}]
  \item $uF=Q^2+\pi^{s} QH+\pi^{2s}G$ and $\ol{H}$, $\ol{G}$ are prime to $q$;
  \item The smooth projective model $\cC$ of $C$ over $\cO$ is
    $$\Proj \left(\cO[x_1, x_2, x_3, y]/(Q-\pi^s y, y^2+Hy+G) \right)$$
    in the weighted projective space $\P^{(1,1,1,2)}_{\cO}$.
  \end{enumerate}
\end{theorem}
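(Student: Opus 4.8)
The plan is to mirror the argument of Theorem~\ref{th:goodmodel-f}, using the more refined normality criterion of Lemma~\ref{normality-proj}(b),(c) together with the $\caract(k)=2$ analysis of Lemma~\ref{lm:reduced2}. Start as in the proof of Theorem~\ref{th:goodmodel-f}: choose a quadratic $Q$ with $\ol Q = q$ and write $F = Q^2 + \pi^r G$ with $\ol G \neq 0$, taking $r>0$ as large as possible (over $\hat\cO$, so that it is genuinely maximal). Set $s = \lfloor r/2\rfloor$ and $\epsilon = r - 2s \in\{0,1\}$; the eventual goal is to show $\epsilon = 0$ and $uF = Q^2 + \pi^s Q H + \pi^{2s} G$ for a suitable unit $u$ and suitable $H$, and then to recognize $\cC$ as $\Proj$ of the displayed algebra via Lemma~\ref{normality-proj}(c).

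The first key step is to extract the ``linear in $y$'' term $H$. Because $\caract(k)=2$, completing the square is not available over $k$, so instead I would argue as follows. Consider the candidate model
$$\W = \Proj\!\left(\cO[x_1,x_2,x_3,y]/(Q-\pi^s y,\ y^2 + \pi^{\epsilon} G)\right).$$
The relation $y^2 + \pi^\epsilon G$ is not quite what reduces $F$ correctly: expanding $Q = \pi^s y$ gives $\pi^{2s} y^2 = Q^2$, so $F = Q^2 + \pi^r G = \pi^{2s}(y^2 + \pi^\epsilon G)$ up to the ideal $(Q-\pi^s y)$, which is what we want. So Lemma~\ref{normality-proj}(c) applies \emph{provided} we can establish normality via part (b). If $\epsilon = 1$: the hypothesis for (b) is $G \notin (\pi, Q, y)$, i.e.\ $\ol G$ prime to $q$; then $\W$ is normal, hence $\simeq\cC$, hence smooth — but its special fiber $y^2 = 0$ on the conic is non-reduced, a contradiction. (If instead $\ol G \equiv 0 \bmod q$, write $G = QR + \pi G_1$ and replace $Q$ by $Q + \pi^{\lceil r/2\rceil} R$; a short computation, using $2 \in \pi\cO$ is \emph{not} available here, so one must be a little more careful — this is where the term $\pi^s QH$ is forced to appear.) So the real content is the $\epsilon=0$ case together with the production of $H$.

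The honest version: since $2$ is not a unit, the decomposition $F = Q^2 + \pi^r G$ with maximal $r$ may have $\ol G$ \emph{not} prime to $q$, and one cannot remove the common factor by completing the square. Instead, when $\ol G = \ol Q \ol R \bmod q$ for some quadratic $R$, rewrite
$$F = Q^2 + \pi^r(QR + \pi^{r'} G') = (Q + \tfrac{\pi^r}{?}\cdots)^2 + \cdots;$$
since division by $2$ is illegal, the cross term $2\cdot Q\cdot(\text{correction})$ cannot be produced, so the best one can do is absorb $\pi^r Q R$ into a genuine linear-in-$y$ term: after substituting $Q = \pi^s y$ one has $\pi^r Q R = \pi^{r+s} y R = \pi^{s}\cdot \pi^{r} y R$, suggesting $H := \pi^{r-s} R + (\text{higher order})$, i.e.\ $r = s$ so that $H$ has valuation $0$ content. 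Concretely I expect: iterate the replacement $Q \mapsto Q + \pi^{j} R_j$ to push the ``square part'' as far as possible; what obstructs going past $\pi^{2s}$ is exactly the presence of an irreducible (mod $q$) linear-in-$y$ remainder, which becomes $\pi^s QH$ with $\ol H$ prime to $q$; the unit $u \in 1+\pi\cO$ is then chosen to clear the leading coefficient so that the identity $uF = Q^2 + \pi^s QH + \pi^{2s}G$ holds on the nose with $\ol G$, $\ol H$ prime to $q$. Finally, with this normal form in hand, apply Lemma~\ref{normality-proj}: part (a) gives flatness; for part (b) with $\epsilon = 0$, one checks $B\otimes k = k[x_1,x_2,x_3,y]/(q, y^2 + \ol H y + \ol G)$ is reduced by Lemma~\ref{lm:reduced2}(a) (since $\ol H$ is prime to $q$); then part (c) identifies $\Proj B$ with $\cC$ via Fact~\ref{fact}.

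The main obstacle I anticipate is precisely the algebra of producing the normal form $uF = Q^2 + \pi^s QH + \pi^{2s}G$ with \emph{both} $\ol H$ and $\ol G$ prime to $q$: in characteristic $2$ the square map on quadratics is not surjective onto its image mod $q$ in the naive way, so one has to track carefully which powers of $\pi$ can be ``squared away'' into $Q$ and which are irreducibly stuck as a linear-$y$ term, and simultaneously ensure the residual quartic $\ol G$ does not acquire a factor of $q$ in the process (the way it would in Theorem~\ref{th:goodmodel-f}, where one simply invokes maximality of $r$). A clean way to organize this is to work over $\hat\cO$, treat $y$ as a variable from the outset with relation $Q = \pi^s y$, and run the reduction in the Artinian quotients $\cO/\pi^n$ inductively, at each stage correcting $Q$ and reading off the contributions to $H$ and $G$; the smoothness of $\cC$ (good hyperelliptic reduction) is what guarantees the process terminates with the intersection-theoretic genericity needed to keep $\ol H, \ol G$ prime to $q$ and to apply Lemma~\ref{normality-proj}(b).
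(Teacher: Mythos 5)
Your high-level plan coincides with the paper's: decompose $F$, test the candidate model $\W$ against the smooth model $\cC$ via Lemma~\ref{normality-proj} and Lemma~\ref{lm:reduced2}, iterate corrections $Q\mapsto Q+\pi^{j}R_j$, and use the fact that non-termination would contradict a global property of $C$. However, the core of the argument --- the inductive step that actually produces the decomposition $uF=Q^2+\pi^sQH+\pi^{2s}G$ --- is only ``anticipated'' in your write-up (you leave a literal ``$?$'' at the decisive point), and that is a genuine gap rather than a routine verification. Concretely, starting from $F=Q^2+\pi^sQH+\pi^rP$ with $r<2s$, the step you are missing splits into two cases according to the parity of $r$. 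When $r$ is odd you do sketch the right argument ($\W$ would be normal hence smooth, contradicting the non-reduced special fiber, so $P\in(\pi,Q)$ and the linear term absorbs $QH_0$). But when $r=2\ell$ is even, the argument is different and you do not address it: since the linear-in-$y$ coefficient $\pi^{s-\ell}H$ reduces to $0$, Lemma~\ref{lm:reduced2}(b) shows $\W_k$ is not semi-stable, so $\W\not\simeq\cC$, which by Lemma~\ref{normality-proj} forces $\W_k$ to be non-reduced; Lemma~\ref{lm:reduced2}(a) then says $\ol P$ is a \emph{square} mod $q$, say $P=R^2+QH_0+\pi P_0$, and one replaces $Q$ by $Q_1=Q+\pi^{\ell}R$. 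The resulting cross term $-2\pi^{\ell}RQ_1$ contributes to the linear part with valuation $v(2)+\ell$, which is exactly why in mixed characteristic the linear term cannot be squared away and why the exponent of the linear term must be tracked via a bound of the form $s_1\ge\min\{s,r,v(2)+r/2\}$. Without this bookkeeping you cannot conclude that the limit decomposition has the shape $Q^2+\pi^sQH+\pi^{2s}G$.

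Three further points need repair. First, termination: in the paper it follows not from smoothness of $\cC$ but from geometric integrality of the generic fiber --- if the iteration never stops, the $Q_n$ converge $\pi$-adically and $F$ factors as $\hat Q\hat R$ over $\hat\cO[x_1,x_2,x_3]$, which is absurd. Second, the unit $u$ is not ``chosen to clear a leading coefficient'' in one stroke; it is built by a second iteration: if $\ol H\equiv bq$, write $H=bQ+\pi H_0$, absorb $b\pi^sQ^2$ into $(1+b\pi^s)Q^2$, and rerun the whole process on $(1+b\pi^s)^{-1}F$, again terminating by integrality. Third, $\ol G$ prime to $q$ is not obtained from any ``intersection-theoretic genericity'' during the iteration; it is deduced at the very end from the irreducibility of $\cC_k$ once $\W\simeq\cC$ is established (only $\ol H$ prime to $q$ is arranged beforehand, and that is what makes $\W_k$ reduced via Lemma~\ref{lm:reduced2}(a)). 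In short: right skeleton, same key lemmas as the paper, but the even-$r$ case of the inductive step, the $v(2)$ bookkeeping, the termination mechanism, and the provenance of $u$ and of the primality of $\ol G$ all remain to be supplied.
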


\begin{proof} In all the proof, capitalized italic letters denote homogeneous
  polynomials in $\cO[x_1, x_2, x_3]$ of degree $2$ or $4$ (the exact degree
  will be clear from the context).  \medskip

  {\bf Step 1.}  {\sl Suppose  we have a decomposition
    $$F=Q^2+\pi^s QH+\pi^r P$$ with $1\le r< 2s$ (including  $s=+\infty$, in
    which case $\pi^s$ stands for $0$). Then we have a new decomposition
    $$F=Q_1^2+\pi^{s_1} Q_1H_1+ \pi^{r+1}P_1$$
    with $Q_1\in Q+\pi^{[r/2]}\cO[x_1,x_2, x_3]$ and
    $s_1\ge \min\{ s, r, v(2)+r/2\}$, where $v(2)$ is the normalized
    $K$-valuation of $2$.}

  Let $\ell=[r/2]$ and $\epsilon=r-2\ell$. Consider
  $$ \W=\Proj\left(\cO[x_1, x_2, x_3,y]/(Q-\pi^{\ell}y, y^2+\pi^{s-\ell}Hy+\pi^{\epsilon}P)\right).$$

  \underline{\bf Case 1}. Suppose that $\epsilon=1$ ($\ell\ge 0$).  The special
  fiber of $\W$ is not reduced (Lemma~\ref{lm:reduced2}(a) if $\ell>0$), by
  Lemma~\ref{normality-proj} (b) and (c), we must have
  $$P\in (\pi, Q_0, y)\cap \cO[x_1, x_2, x_3]=(\pi, Q)\cO[x_1,x_2, x_3].$$
  Write $P=QH_0+\pi P_1$, then
  $$F=Q^2+(\pi^s H+\pi^{r}H_0)Q+\pi^{r+1}P_1.$$

  \underline{\bf Case 2}. \ Suppose that $\epsilon=0$. As $s>\ell$, $\W_k$ is not semi-stable by Lemma~\ref{lm:reduced2}(b). Similarly to Case 1, this implies that
  $\W_k$ is not reduced and $\ol{P}$ is a square mod $q$. Write $P=R^2+QH_0+\pi P_0$.
  Let $Q_1=Q+\pi^{\ell}R$. Then
  $$ F= Q_1^2+(\pi^sH+\pi^rH_0-2\pi^{\ell}R)Q_1+\pi^{r+1}P_1.$$
  This achieves Step 1.
  \medskip

  {\bf Step 2.} {\sl The polynomial $F$ admits a decomposition
    $$uF=Q^2+\pi^sQH+\pi^{2s}G$$
    with $\ol{H}$ prime to $q$ and $u\in 1+\pi\cO$. }

  Let us first show that $F$ admits a decomposition
  \begin{equation} \label{eq:qdh}
    F=Q^2+\pi^sQH+\pi^{2s}G.
  \end{equation}
  Suppose that this is not true. Starting with a relation~\eqref{eq:F-Q2} and using repeatedly Step 1,   we construct sequences $Q_n, P_n, H_n\in \cO[x_1,x_2,x_3]$ and
  $s_n\in {\mathbb N}^* \cup \{ +\infty \}$ such that
  $$F=Q_n^2+\pi^{s_n}Q_nH_n+\pi^{r+n}P_n$$
  and
  $$Q_{n+1}-Q_n\in \pi^{[(r+n)/2]}\cO[x_1,x_2,x_3],\ s_n\ge \min\{s_1, r,
  v(2)+r/2\}\text{ for all }n\ge 1\,.$$ The sequence $Q_n$ admits a limit
  $\hat{Q}\in \hat{\cO}[x_1, x_2,x_3]$. As $\hat{Q}$ reduces mod $\pi$ to
  $q\ne 0$, this implies that $Q_n+\pi^{s_n}H_n$ converges to some
  $\hat{R}\in\hat{\cO}[x_1,x_2, x_3]$ such that
  $F=\hat{Q}\hat{R}$. Contradiction because $C$ is geometrically integral.

  Now we prove the assertion of Step 2. Start with a
  relation~\eqref{eq:qdh}. If $\ol{H}$ is prime to $q$ then we are
  done. Otherwise, write $H=bQ+\pi H_0$ with $b\in \cO$. Then
  $$F=(1+b\pi^s)Q^2+\pi^{s+1}QH_0+\pi^{2s}G.$$
  Similarly to the above, applying repeatedly Step 1 to $(1+b\pi^{s})^{-1}F$,
  we find
  $$ (1+b\pi^{s})^{-1}F=Q_1^2+\pi^{s_1}Q_1H_1+\pi^{2s_1}G_1$$
  with $s_1\ge s+1$, $Q_1-Q\in \pi^{s}\cO[x_1,x_2,x_3]$. If $\ol{H}_1$ is
  prime to $q$, then we are done. Otherwise there exist $s_2\ge s_1+1$,
  $b_1\in \cO$ such that
  $$((1+b\pi^{s})(1+b_1\pi^{s_1}))^{-1}F=Q_2^2+\pi^{s_2}Q_2H_2+\pi^{2s_2}G_2.$$
  Again by the geometric integrality of $C$, we see that this process must
  stop after finitely many steps.  \medskip

  {\bf Step 3} Consider a relation given by Step 2. Let
  $$ \W=\Proj\left(\cO[x_1, x_2, x_3,y]/(Q-\pi^{s}y, y^2+Hy+G)\right).$$
  As $q$ is prime to $\ol{H}$, $\W_k$ is reduced by
  Lemma~\ref{lm:reduced2}(a), hence $\W\simeq \cC$ by
  Lemma~\ref{normality-proj}(b) and (c). This implies that $\ol{G}$ is prime
  to $q$ because $\cC_k$ is irreducible.
\end{proof}

\begin{remark}\label{rk:singular2}Suppose that $C$ has stable
  irreducible reduction over $K$. Then Theorems~\ref{th:goodmodel-f} and
  \ref{th:is2} hold after replacing smooth by stable, and in
  \ref{th:goodmodel-f}(c), $V_+(q)$ and $V_+(\ol{G})$ intersect with
  multiplicities at most $2$ instead of $1$.  The proof is exactly the same by
  using Remark~\ref{rk:singular}.  For the general case, there are several
  dozens of stable reduction types for curves of genus $3$. At the present
  moment it seems difficult to find characterizations for all types in a
  similar way as above. However, we think that some types can be dealed with
  the techniques we develop in this work. This will be the purpose of a future
  article.
\end{remark}

\begin{remark}
  Let $\cO$ be a PID with field of fractions $K$ and $C : F=0$ be a smooth
  projective plane quartic with good reduction everywhere over $K$. Gathering
  the local information above, one can give a smooth model of $C$ over $\cO$
  by
  $$\cC = \Proj \left(\cO[x_1, x_2, x_3, y]/(Q-\delta y, uy^2+Hy+G) \right) $$
  where $Q, H, G\in \cO[x_1, x_2, x_3]$ are of degrees $2, 2, 4$,
  $\delta \in \Oc$ is divisible by a prime $\p$ if and only if $C$ has
  hyperelliptic reduction at $\p$ and $u \in \Oc$ is congruent to $1$ modulo
  the primes for which $C$ has hyperelliptic reduction.
\end{remark}

\section{Geometric Invariant Theory and characterization of potentially good
  quartic reduction}

This section starts by gathering some definitions and results on the geometric
invariant theory over rings. The main reference is \cite{Se}. We derive
Corollary~\ref{cor:shabur} which unifies similar results present in the
literature under more restrictive hypotheses.  When $\Xb= \P^{14}$, the space
of plane quartics (represented by their $15$ coefficients), under the action
of $G=\SL_{3,\Z}$, this corollary says the following: given a discrete
valuation ring $\O$ with field of fractions $K$ and a smooth plane quartic
$\xs \in \Xb(K)$, after a possible finite extension of $K$, we can find an
integral model $\xfrak \in X(\O)$ such that its reduction is GIT-semi-stable
(whereas the reduction of a naive model of $\xs$ may be GIT-unstable).

We then need a second ingredient. Over an algebraically closed field, a
Homogeneous System Of Parameters (HSOP, see Definition~\ref{def:hsop}) is a
``minimal'' set of invariants which vanish exactly on the GIT-unstable
locus. We show in Section~\ref{sec:hsop} that for a class of nice rings $R$,
there exists such a set of invariants defined over $R$ which keep this
property over the algebraic closures of all residue fields of $R$
simultaneously. Remarks~\ref{rem:hsop1}, \ref{rem:hsop2} and \ref{rem:hsop}
will then help when working out such HSOP explicitly in
Section~\ref{sec:dixm-invar-char}.

Finally, in Section~\ref{sec:Shah}, we apply these results to the
characterization of the reduction type. Roughly speaking, the model $\xfrak$
is a minimal form (Definition~\ref{def:minimalform}) since we know that one of
the value of an invariant in the HSOP is not zero on the special fiber. Hence
its invariants are a minimal representative of the invariants of $\xs$
(Definition~\ref{def:minimalrepresentative}). Moreover, the valuations of this
representative can easily be computed from the values of the invariants
without knowing $\xfrak$.  Looking at the valuation of the discriminant with
respect to the HSOP (Definition~\ref{def:normalizedval}) will then give us the
reduction type of $\xfrak$ and therefore the type of the potential reduction
of $\xs$.

\subsection{Some results on Geometry Invariant Theory (GIT)}
\label{sec:appendix}

We let $S=\Spec R$ be a noetherian affine scheme, and $G$ be an affine smooth
group scheme over $S$ with connected and reductive geometric fibers (for
instance $\SL_{n,R}, \GL_{n,R}$).  Let $V$ be a finite rank free $R$-module
endowed with an action of $G$:
$$ G\to \mathrm{Aut}_R(V) $$
(morphism of group schemes). Then $G$ acts ``linearly'' on the projective
space $\P(V^{\vee})$, where $V^{\vee}$ is the dual of $V$.  Let $\Xb$ be a
$G$-stable closed subscheme of $\P(V^{\vee})$ of the form $\Xb=\Proj B$ where
$B$ is a $G$-stable homogeneous quotient of the symmetric algebra
$\mathrm{Sym}(V^{\vee})$ over $R$.  The ring of $G$-invariants $B^G$ is
$$B^G=\{ b\in B \mid \sigma(b\otimes 1_A)=b\otimes 1_A \}$$
where $A$ runs through the $R$-algebras and $\sigma\in G(A)$.  When $R$ is an
algebraically closed field $k$, the invariants of $B$ can be found using only
the $k$-rational points of $G$ and hence it corresponds to the usual
definition. Namely, denote by
$B^{G(k)}=\{ b\in B \mid g\cdot b=b, \ \forall g\in G(k)\}\supseteq B^G$, we
have the following result.

\begin{proposition} \label{punct-f} Let $k$ be an algebraically closed field
  and let $B$ be a $k$-algebra endowed with the action of a smooth affine
  algebraic group $G$. Then $B^{G}=B^{G(k)}$.
\end{proposition}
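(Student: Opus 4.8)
The statement is that for an algebraically closed field $k$ and a smooth affine algebraic group $G$ acting on a $k$-algebra $B$, one has $B^G = B^{G(k)}$. The inclusion $B^G \subseteq B^{G(k)}$ is trivial, so the content is the reverse inclusion: an element invariant under the $k$-points of $G$ is in fact invariant under the functorial action, i.e. fixed by $\sigma \in G(A)$ for every $k$-algebra $A$. The plan is to translate the invariance condition into an identity of morphisms of schemes and then exploit that $k$-points are Zariski dense in a smooth (in particular reduced) group scheme of finite type over an algebraically closed field.

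First I would fix $b \in B^{G(k)}$ and, after choosing a finite-dimensional $G$-stable subspace $W \subseteq B$ containing $b$ (such a subspace exists because the $G$-action on $B$ is locally finite — this is the standard fact that a rational representation is a union of finite-dimensional subrepresentations, which holds since $B$ is a comodule over $\mathcal{O}(G)$), I would consider the coaction morphism $\mu_W : W \to W \otimes_k \mathcal{O}(G)$ and record that invariance of $b$ under $\sigma \in G(A)$ amounts to $(\mathrm{id}_W \otimes \sigma^\#)(\mu_W(b)) = b \otimes 1$ in $W \otimes_k A$. Writing $\mu_W(b) = \sum_i w_i \otimes f_i$ with $w_i \in W$ linearly independent and $f_i \in \mathcal{O}(G)$, the hypothesis $b \in B^{G(k)}$ says that for every $k$-point $g \in G(k)$, $\sum_i f_i(g) w_i = b$, hence $f_i(g) = \delta_i$ (where $b = \sum_i \delta_i w_i$) for all $g \in G(k)$.

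Next I would invoke that $G(k)$ is dense in $G$: since $G$ is smooth over the algebraically closed field $k$, it is in particular reduced and of finite type, so its closed points are Zariski dense and, being $k$-rational, they are exactly $G(k)$; a regular function vanishing on all of $G(k)$ must therefore be zero in $\mathcal{O}(G)$. Applying this to $f_i - \delta_i$ gives $f_i = \delta_i$ in $\mathcal{O}(G)$, so $\mu_W(b) = b \otimes 1$. This is precisely the universal invariance statement: for any $k$-algebra $A$ and any $\sigma \in G(A)$, corresponding to a $k$-algebra homomorphism $\sigma^\# : \mathcal{O}(G) \to A$, we get $(\mathrm{id}_W \otimes \sigma^\#)(\mu_W(b)) = b \otimes 1$ in $W \otimes_k A$, i.e. $\sigma$ fixes $b$. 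Hence $b \in B^G$.

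The only genuine input beyond formal manipulation is the density of $G(k)$ in $G$ for smooth $G$ over an algebraically closed field — equivalently, the reducedness of $\mathcal{O}(G)$ together with the Nullstellensatz — and the local finiteness of the coaction; I expect the former to be the step one must state carefully, since it is exactly where the smoothness hypothesis (as opposed to mere flatness or finite type) is used. Everything else is a routine unwinding of the definition of the functorial fixed points $B^G$ in terms of the comodule structure. I would present the argument in this order: reduce to a finite-dimensional $G$-stable subspace, unwind invariance into a comodule identity, use density of $k$-points to upgrade pointwise invariance to the comodule identity, then re-interpret the identity as universal invariance.
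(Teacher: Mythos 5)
Your proof is correct and is essentially the paper's argument in comodule language: your identity $\mu_W(b)=b\otimes 1$ is exactly the paper's statement that the universal point $\sigma_0\in G(\mathcal{O}(G))$ fixes $b\otimes 1$, and your density step (smooth $\Rightarrow$ reduced, plus the Nullstellensatz, so a regular function vanishing on $G(k)$ is zero) is the paper's "reduced and of finite type" case applied to $A_0=\mathcal{O}(G)$. The only cosmetic difference is that you first pass to a finite-dimensional $G$-stable subspace, which is harmless but not needed since the coaction of $b$ is already a finite sum of elementary tensors.
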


\begin{proof}
  Let $b\in B^{G(k)}$. Let $A$ be a $k$-algebra.  We have to show that
  $b\otimes 1_A\in B\otimes_k A$ is invariant by all $\sigma\in G(A)$. Let
  $h:=\sigma(b\otimes 1_A)-b\otimes 1_A \in B\otimes_k A$.

  First suppose that $A$ is reduced and of finite type over $k$.  For any
  closed point $y\in \Spec A$ the image of $h$ by the canonical homomorphism
  $$B\otimes_k A \to B\otimes_k k(y) = B\otimes_k k =B$$
  is $\sigma(y)(b\otimes 1_{k(y)})-b\otimes 1_{k(y)}=\sigma(y)(b)-b=0$, where
  $\sigma(y)\in G(k(y))=G(k)$ is the image of $\sigma$ by $G(A)\to G(k(y))$.
  Let $\{ e_i\}_i$ be a basis of $B$ as $k$-vector space. Then
  $h=\sum_i e_i\otimes a_i$ with $a_i\in A$ and we have $a_i(y)=0$ for all
  closed points $y\in \Spec A$. Hence $a_i=0$ and $h=0$.

  Now we consider the general case. Let $A$ be any $k$-algebra and let
  $\sigma\in G(A)$. Let us denote by $\sigma^{\#}: A_0:=\cO_G(G)\to A$ the
  corresponding $k$-algebra homomorphism. Let $\sigma_0\in G(A_0)$ be the
  identity morphism. Then the map $G(A_0)\to G(A)$ induced by $\sigma^{\#}$
  takes $\sigma_0$ to $\sigma$ and $h$ is the image of the element
  $\sigma_0(b\otimes 1_{A_0})-b\otimes 1_{A_0}$ by
  $\mathrm{Id}_B \otimes \sigma^{\#}$. Applying the previous case to the
  reduced $k$-algebra of finite type $A_0$, we get $h=0$.
\end{proof}

\begin{corollary} \label{punct-f2} Let $R$ be an integral domain
  with field of fractions $K$ and let $\bar{K}$ be an algebraic
  closure of $K$.
  Let $B$ be a flat $R$-algebra endowed with the action of a smooth
  affine algebraic group $G$ over $R$. Then
$$B^{G}=\{ b\in B \mid  b\otimes 1_{\bar{K}}
\in (B\otimes_R \bar{K})^{G(\bar{K})} \}.$$
\end{corollary}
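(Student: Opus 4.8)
The plan is to reduce the statement to Proposition \ref{punct-f} applied over the algebraically closed field $\bar K$. The inclusion $B^G \subseteq \{ b \in B \mid b\otimes 1_{\bar K} \in (B\otimes_R \bar K)^{G(\bar K)}\}$ is immediate: if $b$ is $G$-invariant in the functorial sense, then in particular $\sigma(b\otimes 1_{\bar K}) = b\otimes 1_{\bar K}$ for every $\sigma \in G(\bar K)$, so $b\otimes 1_{\bar K}$ lies in $(B\otimes_R\bar K)^{G(\bar K)}$. For the reverse inclusion, suppose $b\otimes 1_{\bar K}$ is fixed by all $\bar K$-points of $G$. The base change $B\otimes_R\bar K$ carries an action of the smooth affine algebraic group $G_{\bar K}=G\times_R\bar K$ over $\bar K$, so Proposition \ref{punct-f} gives $(B\otimes_R\bar K)^{G_{\bar K}} = (B\otimes_R\bar K)^{G(\bar K)}$. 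Hence $b\otimes 1_{\bar K}$ is $G_{\bar K}$-invariant in the functorial sense: $\tau((b\otimes 1_{\bar K})\otimes 1_A) = (b\otimes 1_{\bar K})\otimes 1_A$ for every $\bar K$-algebra $A$ and every $\tau \in G_{\bar K}(A) = G(A)$.

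The remaining task is to promote invariance over $\bar K$ to invariance over $R$, and here flatness of $B$ over $R$ is the key hypothesis. First I would reduce to checking invariance against $\sigma_0 \in G(A_0)$ where $A_0 = \cO_G(G)$ is the identity point, exactly as in the second paragraph of the proof of Proposition \ref{punct-f}: any $\sigma \in G(A)$ is the image of $\sigma_0$ under the map $G(A_0)\to G(A)$ induced by $\sigma^\#: A_0\to A$, so it suffices to prove that $h_0 := \sigma_0(b\otimes 1_{A_0}) - b\otimes 1_{A_0} \in B\otimes_R A_0$ vanishes. Now $B\otimes_R A_0$ is flat over $R$, hence injects into $(B\otimes_R A_0)\otimes_R K = B\otimes_R (A_0\otimes_R K)$, which in turn injects into $B\otimes_R(A_0\otimes_R\bar K)$; so it is enough to show $h_0$ maps to $0$ in $B\otimes_R(A_0\otimes_R\bar K) \simeq (B\otimes_R\bar K)\otimes_{\bar K}(A_0\otimes_R\bar K)$. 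Under this identification the image of $h_0$ is precisely $\tau_0((b\otimes 1_{\bar K})\otimes 1) - (b\otimes 1_{\bar K})\otimes 1$ where $\tau_0 \in G_{\bar K}(A_0\otimes_R\bar K)$ is the image of $\sigma_0$; but $A_0\otimes_R\bar K = \cO_{G_{\bar K}}(G_{\bar K})$ and $\tau_0$ is the identity point of $G_{\bar K}$, so this is zero by the functorial $G_{\bar K}$-invariance established in the previous step. Therefore $h_0 = 0$, and unwinding the reduction shows $b \in B^G$.

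The main obstacle — and really the only subtle point — is the chain of injections in the last paragraph: one must be careful that $A_0 = \cO_G(G)$ need not be $R$-flat, so one cannot invoke flatness of $A_0$, but one can and should invoke flatness of $B$ over $R$ to get $B\otimes_R M \hookrightarrow B\otimes_R M'$ from any injection $M\hookrightarrow M'$ of $R$-modules, applied to $A_0 \hookrightarrow A_0\otimes_R\bar K$ (which is injective because $R\to\bar K$ is injective, $R$ being a domain with fraction field $K\subseteq\bar K$, and tensoring an injection of $R$-modules $R\hookrightarrow\bar K$ with the $R$-module... — more carefully, $A_0\to A_0\otimes_R\bar K$ is injective since $\bar K$ is a faithfully flat... no: since $R\hookrightarrow \bar K$ splits $R$-linearly is false, so one argues that $A_0$ is $R$-torsion-free is also false in general). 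The clean way is: $A_0 \hookrightarrow A_0\otimes_R K$ need not hold either, so instead embed directly using that $b\otimes 1_{A_0}$ and $\sigma_0(b\otimes 1_{A_0})$ lie in $B\otimes_R A_0$ and track them through $B\otimes_R A_0 \to B\otimes_R A_0 \otimes_R \bar K$; this last map is injective because $B\otimes_R A_0$ is... — the honest fix is to replace $A_0$ throughout by $A_0$ itself and use Proposition \ref{punct-f}'s argument verbatim after base change, i.e. apply the reduced-finite-type case of that proof to $A_0\otimes_R\bar K$ directly; I expect the cleanest writeup simply cites the structure of the proof of Proposition \ref{punct-f} and inserts one faithful-flatness of $B$ over $R$ after passing to $\bar K$.
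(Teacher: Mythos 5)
Your overall strategy is exactly the paper's: reduce to the universal point $\sigma_0\in G(A_0)$ with $A_0=\cO_G(G)$, inject $B\otimes_R A_0$ into its base change to $\bar K$, and conclude by Proposition~\ref{punct-f} applied over $\bar K$. But the one step on which the whole argument turns --- the injectivity of $B\otimes_R A_0\to (B\otimes_R\bar K)\otimes_{\bar K}(A_0\otimes_R\bar K)\simeq (B\otimes_R A_0)\otimes_R\bar K$ --- is precisely the step you leave unresolved: your last paragraph cycles through several attempted justifications, correctly rejects each of them, and ends with ``I expect the cleanest writeup simply cites\dots'', which is not a proof. As it stands this is a genuine gap, and your worry is legitimate in the sense that flatness of $B$ alone would not suffice (if $A_0$ had $R$-torsion, $B\otimes_R A_0$ could die after tensoring with $K$ even for $B=R$).

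The missing observation is that the hypothesis that $G$ is a \emph{smooth} affine group scheme over $R$ already gives you what you need: $G\to\Spec R$ smooth implies flat, so $A_0=\cO_G(G)$ is flat over $R$. Then $B\otimes_R A_0$ is flat over the domain $R$, hence $R$-torsion-free, hence injects into $(B\otimes_R A_0)\otimes_R K$, and the further base change along the faithfully flat map $K\to\bar K$ is again injective. (Equivalently: $A_0\hookrightarrow A_0\otimes_R\bar K$ because $A_0$ is flat, and tensoring this injection with the flat $R$-module $B$ preserves injectivity.) This is exactly the paper's one-line justification ``as $B$ and $A_0$ are flat over $R$''; with that sentence inserted, your argument closes and coincides with the paper's proof.
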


\begin{proof} Let $b\in B$. To check that $b\in B^G$, as in the
proof of Proposition~\ref{punct-f}, it is enough to check that
$\sigma(b\otimes 1_{A_0})=b\otimes 1_{A_0}$ in $B\otimes_R A_0$ where
$A_0=\cO_G(G)$. As $B$ and $A_0$ are flat over $R$, the morphism
$B\otimes_R A_0\to (B\otimes_R \bar{K})\otimes_{\bar{K}} (A_0\otimes_R \bar{K})$ is
injective. So it is enough to check the desired equality over $\bar{K}$.
This then follows from Proposition~\ref{punct-f}.
\end{proof}

Let $R=k$ be an
algebraically closed field. A point $\xs  \in \Xb(k)$ is \emph{GIT-semi-stable}
if there is a rational point $\hat{\xs}$ in the affine cone $\hat{\Xb}$ of
$\Xb$ lying over $\xs$  such that $0\notin
\overline{G(k)\cdot\hat{\xs}}$
(Zariski closure in $\hat{\Xb}$). It is \emph{GIT-stable} if $G(k)\cdot \hat{\xs}$ is
closed of dimension $\dim G$ (\ie~the stabilizer of $G$ at $\hat{\xs}$ is
finite). When $\dim G>0$, GIT-stable points are also GIT-semi-stable.

For arbitrary $R$, a geometric point $\xs\in \Xb(k)$, where $k$ an
$R$-algebra which is an
algebraically closed field, is GIT-semi-stable
(resp. GIT-stable) if it is GIT-semi-stable (resp. GIT-stable) for the
action of $G\times_S \Spec k$ on $\Xb\times_S \Spec k$.
Then $\xs$ is
GIT-semi-stable if and only if there exists a homogeneous $f\in B^{G}$ of positive
degree such that $\xs\in D_+(f)$, see \cite[Thm. 1]{Se}.
As a consequence, the geometric points of the open subset
$\Xb^{\ds}:=\cup_f D_+(f)$ of $\Xb$,
where $f$ runs through the homogeneous elements of $B^G$ of positive
degrees, are exactly the GIT-semi-stable geometric points of $\Xb$.

\begin{theorem}[\cite{Se}, Theorem 4, p. 269 and Remarks 8, 9, p. 269,
271]\label{theorem4} Let $R$ be a noetherian ring. Then
we have the following properties.
\begin{enumerate}
\item \label{affine} The
inclusion $B^G\subseteq B$ induces an affine surjective morphism
$$\phi :  \Xb^{\ds}\to \Yb:=\Proj (B^{G}).$$
This morphism is a \emph{categorical quotient}: any $G$-invariant
morphism $\Xb^{\ds}\to \Zb$ with trivial action on $\Zb$ factors
uniquely through $\Yb\to \Zb$. The formation of this quotient
commutes with flat base changes $R\to R'$.
\item \label{surj} If $\Zb$ is a $G$-stable closed subscheme of $\Xb$, then $\phi(\Zb)$
is closed in $\Yb$.
\item\label{quot} For any pair of geometric points $\xs_1, \xs_2\in \Xb^{\ds}(k)$, we have
$\phi(\xs_1)=\phi(\xs_2)$ if and only if
$$\overline{G(k)\cdot \xs_1}\cap\overline{G(k)\cdot \xs_2}\ne\emptyset.$$
\item There is an open subset $\Yb^s\subseteq \Yb$ such that the geometric
points of $\Xb^{\mathrm s}:=\phi^{-1}(\Yb^s)$ are exactly
the GIT-stable points. In particular, if $\xs_1, \xs_2\in \Xb^s(k)$ are two geometric
points, then $\phi(\xs_1)=\phi(\xs_2)$ if and only if $\xs_2\in G(k)\cdot
\xs_1$.
\item \label{th;proj} If $R$ is an excellent ring\footnote{See
  [EGA], IV.7.8.2. Localizations of algebras of finite type over a
  field, Dedekind domains of characteristic $0$, complete noetherian
  local rings are excellent ([EGA], IV.7.8.3(ii)-(iii)). Excellent
rings are universally Japanese ([EGA], IV.7.8.3(vi)).}, then
$B^G$ is of finite type over $R$ and $\Yb$ is projective over $R$.
\end{enumerate}
\end{theorem}

\begin{remark}
In \cite[Appendice]{Bur}, some clarifications are added to the proof of \cite[Prop.8]{Se}.
\end{remark}

The categorical quotient $\Yb$ is denoted by $\Xb^{\ds}\dq G$.
Let $\ys\in (\Xb^{\ds}\dq G)(k)$ be a geometric point.
Then $\phi^{-1}(\ys)$ is a union
of closures of orbits. Any orbit of smallest dimension contained
in $\phi^{-1}(\ys)$ is closed, hence contained in (and equal to) the
intersection of all these closures. It is a \emph{minimal orbit}.

The following result appears in \cite[Prop.2.1]{shah} and
    \cite[Lem.5.3]{mum-ens} in equal characteristics setting, in
    \cite[p.122]{Bur} over the $p$-adics for $\Xb=\P(V)$ and in
    \cite{silverman-dyna} and \cite{szpiro-semi} in a dynamical
    context.

\begin{corollary} \label{cor:shabur}
Let $R=\O$ be a discrete valuation ring with field of fractions $K$ and such that
the categorical quotient $\Xb^{\ds}\dq G$ is of finite type (hence
projective) over $\O$.
Let $\xs\in \Xb^s(K)$. Then there exists an extension $\O\subseteq \O'$
of discrete valuation rings with $K'=\Frac(\O')$ finite over $K$,  and an integral point $\xfrak \in \Xb^{\ds}(\O')$ such that
$\xfrak_{K'}\in G(K')\cdot \xs$. Moreover,
we can ask the image of the closed point of $\xfrak$ to belong to a minimal
orbit.
\end{corollary}
\begin{proof}
  Let $\ys=\phi(\xs)\in \Yb^s(K)$. Let
  $\Gamma=\overline{\{ \ys \}}\subseteq \Yb$.  As $\Yb/S$ is projective,
  $\Gamma\to S$ is an isomorphism.  Let $W_K$ be the closed subset
  $\phi^{-1}(\Gamma)_K=\phi^{-1}(\ys)$ of $\Xb_K^{\ds}$ endowed with the
  reduced subscheme structure. Let
  $W=\overline{\phi^{-1}(\Gamma)_K}\subseteq \Xb^{\ds}$ be its
  scheme-theoretical closure.  As $W_K$ is a homogeneous space under $G_K$,
  hence irreducible, then so is $W$.  Therefore $W$ is an integral scheme
  dominating $S=\Spec \O$, hence flat over $S$.  It is a $G$-stable closed
  subscheme of $\Xb^{\ds}$.  By Theorem \ref{theorem4}(\ref{surj}),
  $\phi(W)\subseteq \Gamma$ is closed hence equal to $\Gamma$. So $W\to S$ is
  surjective.  By \cite{liu-book}, Prop. 10.1.36, any closed point $w$ of the
  (non-empty) special fiber of $W\to S$ belongs to an irreducible closed
  subscheme $\Gamma'\subseteq W$ quasi-finite and surjective over $S$.

Let $\Spec \O'$ be the localization at a closed point
of the normalization of $\Gamma'$. Note that $\O'$ is a
discrete valuation ring.
The canonical morphism $\xfrak: \Spec \O'\to \Xb^{\ds}$ is an integral point as
we are looking for: its generic point belongs to the same geometric
orbit as $\xs$, so $\xfrak_{K'}\in G(K').\xs$ after a finite extension of $K'$
if necessary. Moreover,
the closure of the orbit of $w$ is contained in the closed fiber
$W_s$,  so the minimal orbit is also contained in $W_s$. Therefore we
can chose $w$ in the minimal orbit.
\end{proof}

Now let us say a few words on the fibers of $\Xb^{\ds}\dq G\to S$.
Let $G, \Xb$ and $S=\Spec R$ be as in Theorem~\ref{theorem4}.
For any $T\to S$ with $T$ affine noetherian,
we have $(\Xb\times_S T)^{\ds}=\Xb^{\ds}\times_S T$.
By the categorical quotient property, we have a canonical morphism
\begin{equation}  \label{eq:comp}
(\Xb^{\ds}\times_S T)\dq G_T \to (\Xb^{\ds}\dq G)\times_S T.
\end{equation}
This is an isomorphism if $T\to S$  is flat
\cite[p. 271]{Se}.
In general, this morphism induces a bijection on the geometric points
by Theorem~\ref{theorem4}(\ref{quot}). If both sides are projective
schemes over $T$ (e.g. $S, T$ are both excellent), then the morphism
is {projective \cite[Corollary 3.3.32(e)]{liu-book}, hence finite
\cite[Corollary 4.4.7]{liu-book} and bijective.
In particular, if $R$ is excellent, then for any geometric point
$s\in S(k)$ we get a finite
bijective, and hence homeomorphic, morphism of projective schemes over $k$:
$$(\Xb_s)^{\ds}\dq G_s\to (\Xb^{\ds}\dq G)_s.$$

\begin{remark} \label{quotient-ft} Suppose that $\Xb$, $G$ and its action on
  $\Xb$ are all defined over $\Z$. Then for any integral domain $A$, the
  categorical quotient $(\Xb_A)^{\ds}\dq G_A$ is projective over $A$. Indeed,
  if $A$ has characteristic zero, then it is flat over $\Z$ and
  $(\Xb_A)^{\ds}\dq G_A$ is obtained by base change from $\Xb^{\ds}\dq G$
  which is projective thanks to
  Theorem~\ref{theorem4}\eqref{th;proj}. Otherwise $A$ contains a prime field
  $\F_p$, and $(\Xb_A)^{\ds}\dq G_A$ is obtained by the (flat) base change
  from $(\Xb_{\F_p})^{\ds}\dq G_{\F_p}$ to $A$.
\end{remark}

\subsection{Homogeneous system of parameters} \label{sec:hsop}

\begin{definition} \label{def:hsop}
Let $E$ be a graded algebra over a ring $R$.
\begin{enumerate}
\item A family of homogeneous elements $f_0, f_1, \dots, f_m\in E_+$ is called
  a \emph{Homogeneous System Of Radical Generators (HSORG) over $R$} if
  $V_+(f_0, \dots, f_m)=\emptyset$ in $\Proj E$. This is equivalent to the
  inclusion ${E_+}\subseteq \sqrt{(f_0, \dots, f_m)}$, where $E_+$ is the
  irrelevant ideal of $E$.
\item If the fibers of $\Proj E\to \Spec R$ all have the same dimension
  $d\ge 0$, a \emph{Homogeneous System Of Parameters (HSOP) over $R$} of $E$
  consists in a HSORG with $d+1$ elements. Note that $d+1$ is the smallest
  possible cardinality of a HSORG.
\end{enumerate}
\end{definition}

\begin{remark} \label{rem:hsop1} Let $A$ be an $R$-algebra. Any HSORG
  $\{f_0, \dots, f_m\}$ of $E$ gives rise to a HSORG
  $\{f_0\otimes 1, \dots, f_m\otimes 1 \}$ of $E\otimes_R A$. The converse is
  true if $R\to A$ is faithfully flat (e.g. a field extension).  Under the
  condition (ii) of the definition, the fibers of the morphism
  $\Proj (E\otimes_R A)\to \Proj A$ have dimension $d$ and
  $\{ f_0\otimes 1, \dots, f_d\otimes 1\}$ is a HSOP of $E\otimes_R A$.
  Conversely, for $f_0, \dots, f_m$ to be a HSORG (resp. HSOP) of $E$, it is
  enough that their images in $E \otimes_R k(t)$ form a HSORG (resp. HSOP) for
  all points $t\in \Spec R$. If $R=\O$ is a discrete valuation ring, by the
  properness of $\Proj E\to \Spec R$, a list of homogeneous elements of $E$ is
  a HSORG (resp. HSOP) over $\O$ if and only if their images in
  $E \otimes_{\O} k$ form a HSORG (resp. HSOP) over the algebraic closure of
  the residue field $k$.
\end{remark}

\begin{remark} \label{rem:hsop2} Let $B$ be a homogeneous algebra over a field
  $k$, let $G$ be a reductive algebraic group over $k$ acting on $B$. Let
  $f_0, \dots, f_d\in B^G=E$ be homogeneous with $d=\dim \Proj E$. Then they
  form a HSOP over $k$ of $E$ if in $\Spec B$, we have
  $V(f_0, \dots, f_d)=V(\m B)$ (the \emph{null-cone}) where $\m=E_+$. Moreover
  $E$ will be a finite algebra over $k[f_0, \dots, f_d]$, see \cite[Lemma
  2.4.5.]{DerKem}.
\end{remark}

\begin{remark} \label{rem:hsop} Let $R, B, G$ be as in the beginning of this
  section. Let $A$ be a noetherian $R$-algebra.
  \begin{enumerate}
  \item Any HSORG of $B^G$ generates a HSORG of $(B\otimes_R A)^{G_A}$ by the
    surjectivity of the morphism \eqref{eq:comp}.
  \item If moreover $B^G$ and $(B\otimes_R A)^{G_A}$ are of finite type
    respectively over $R$ and $A$, then the same statement holds for any HSOP
    of $B^G$ because for any $t\in \Spec A$ lying over some $s\in S$,
    $(\Proj (B\otimes_R A)^{G_A})_{k(t)}$ is homeomorphic to
    $(\Proj B^G)_s \times_{\Spec k(s)} \Spec k(t)$, hence they have the same
    dimension.
  \item Let $R=\O$ be a discrete valuation ring with residue field $k$, and
    suppose $B^G$ is of finite type over $\O$.  Using
    Proposition~\ref{punct-f} and the above remarks, we see that a list of
    homogeneous elements of $B^G$ is a HSOP of $B^G$ if and only if their
    images in $B \otimes_{\O} \bar{k}$ define the zero set
    $V((B \otimes_{\O} \bar{k})^{G(\bar{k})}_+ (B \otimes_{\O} \bar{k}))$ or
    equivalently they form a HSOP of $(B\otimes_{\cO} \bar{k})^{G(\bar{k})}$
    over $\bar{k}$. We can then use classical methods of GIT over an
    algebraically closed field to prove that a given list of homogeneous
    elements of $B^G$ is a HSOP of $B^G$.
  \end{enumerate}
\end{remark}

A natural question is when a HSOP exists.  A \emph{pictorsion ring}
(\cite{GLL}, Definition 0.3) is a commutative ring $R$ such that for any
finite homomorphism $R\to A$, the group $\mathrm{Pic}(A)$ is of torsion. The
ring of integers of a finite extension of $\mathbb Q$, the ring of regular
functions of an affine connected regular curve over a finite field, and
semi-local rings are pictorsion.

\begin{proposition} \label{hsop} Let $R$ be a noetherian pictorsion ring. Let
  $E$ be a homogeneous $R$-algebra such that the fibers of
  $\Proj E\to \Spec R$ all have the same dimension $d\ge 0$. Then $E$ has a
  HSOP over $R$.
\end{proposition}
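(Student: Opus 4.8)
The plan is to reduce the existence of a HSOP to a finite sequence of applications of prime avoidance, exactly as in the classical Noether normalization argument over a field, but keeping track of the fact that the homogeneous "generic hyperplane sections" we need may fail to exist as single elements because the relevant line bundles on the (finite) schemes cut out so far need not be trivial; this is precisely where the pictorsion hypothesis enters. First I would set up the induction: I claim that for each $i=0,1,\dots,d$ there exist homogeneous elements $f_0,\dots,f_{i}\in C_+$ such that every fiber of $\Proj(C/(f_0,\dots,f_i))\to\Spec R$ has dimension $\le d-i-1$ (with the convention that dimension $\le -1$ means empty). Taking $i=d$ gives $V_+(f_0,\dots,f_d)=\emptyset$, which is the definition. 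The base case $i=-1$ is the hypothesis that all fibers of $\Proj C\to\Spec R$ have dimension $d$.

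For the inductive step, suppose $f_0,\dots,f_{i-1}$ have been constructed and set $C'=C/(f_0,\dots,f_{i-1})$, a homogeneous $R$-algebra whose $\Proj$ has all fibers of dimension $\le d-i$. Consider the closed subscheme $Z=\Proj C'\subseteq\Proj C$ and, inside $Z$, the (closed) locus $W$ swept out by the irreducible components of the fibers of $Z\to\Spec R$ of maximal dimension $d-i$. The subtle point is that $W\to\Spec R$ need not be equidimensional in a way visible over all of $\Spec R$ at once, so I would instead argue component-by-component: let $Z_1,\dots,Z_m$ be the irreducible components of $Z$ (there are finitely many, $C'$ being noetherian), and for each $j$ pick a closed point in the generic fiber of $Z_j\to\overline{\{\eta_j\}}$ where $\eta_j$ is the image of the generic point of $Z_j$. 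For each such point, the ideal of homogeneous elements of $C'_+$ vanishing at it is a homogeneous prime not containing $C'_+$. What I want is a single homogeneous element $f_i\in C'_+$ avoiding all of these finitely many primes. Over a field this is graded prime avoidance; over a general ring the obstruction to producing one element (rather than a product living in a twist $\mathcal{O}(n)$) is the possible nontriviality of $\mathrm{Pic}$ of the relevant finite $R$-algebra. Here the pictorsion hypothesis, applied to the finite $R$-algebra obtained from $Z$ after a suitable affine chart / Veronese reembedding, guarantees that some power of the ample twist is trivial, so that a genuine homogeneous element can be extracted. By construction $f_i$ then does not vanish identically on any top-dimensional component of any fiber of $Z\to\Spec R$, so $\Proj(C'/(f_i))\to\Spec R$ has all fibers of dimension $\le d-i-1$, completing the step.

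I expect the main obstacle to be the gluing/extraction step: converting the fiberwise graded prime avoidance into the existence of one homogeneous element of $C_+$ over all of $\Spec R$ simultaneously. Over a base that is merely noetherian one would at best get a section of some twisting sheaf $\mathcal{L}^{\otimes n}$ over a finite scheme, and the content of \cite{GLL} is exactly that on a pictorsion base such twists become trivial after raising to a power, so a section of a trivial (hence structure-sheaf) bundle — i.e. an honest homogeneous element — exists. I would therefore cite \cite[Definition 0.3]{GLL} and the relevant Bertini-type statement there (every finite scheme over a pictorsion ring admits the needed "generic hyperplane" by a single equation after a Veronese twist) to carry out the extraction, and this is the one place where "pictorsion" is used in an essential way; everything else is the standard dimension bookkeeping of Noether normalization together with the finiteness statements for noetherian $\Proj$'s used in the induction.
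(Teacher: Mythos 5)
Your overall architecture --- induction on the fiber dimension, one avoidance step per stage, with the pictorsion hypothesis used to trivialize a twist of $\mathcal{O}(1)$ on a scheme finite over $R$ so that a genuine homogeneous element can be extracted --- is indeed the shape of the argument the paper invokes (its own proof is a one-line reference to the proof of Theorem 8.1 of \cite{GLL}). But there is a genuine gap in your inductive step: avoiding one closed point in the generic fiber of each $Z_j\to\overline{\{\eta_j\}}$ does \emph{not} guarantee that $V_+(f_i)$ contains no top-dimensional component of any fiber, because components of \emph{special} fibers are invisible from the generic points of the $Z_j$. Concretely, take $R$ a DVR with uniformizer $\pi$ and $Z: xy-\pi z^2=0$ in $\P^2_R$: here $Z$ is irreducible with irreducible generic fiber, its special fiber is the union of the two lines $x=0$ and $y=0$, and $f=x$ vanishes at no point of the generic fiber, yet $V_+(x)\cap Z$ has a one-dimensional special fiber, so the fiber dimension does not drop. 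Since $\Spec R$ may contain infinitely many points over which the fiber acquires extra top-dimensional components, you also cannot repair this by enlarging your finite list of points fiber by fiber.

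What the argument actually needs at each stage is a closed subscheme $F\subseteq Z$, \emph{finite over $R$}, meeting every irreducible component of maximal dimension of every fiber of $Z\to\Spec R$; the existence of such a finite quasi-section is itself one of the main results of \cite{GLL} (a projective relative of \cite[Prop.~10.1.36]{liu-book}, which this paper uses elsewhere), and it is the ingredient your sketch is missing. One then seeks $f_i$ with $V_+(f_i)\cap F=\emptyset$: writing $F=\Spec A$ with $A$ finite over $R$, the pictorsion hypothesis makes $\mathcal{O}(1)|_F$ torsion in $\Pic(A)$, so some $\mathcal{O}(n)|_F$ is trivial and admits a nowhere-vanishing section, which lifts to a degree-$n$ form after increasing $n$ (relative ampleness). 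Note that this also corrects your description of where $\Pic$ intervenes: a homogeneous element of degree $n$ is already a section of $\mathcal{O}(n)$ with no Picard obstruction; the obstruction is to finding one that is \emph{nowhere zero on $F$}. With $F$ in place of your finite set of points, the rest of your dimension bookkeeping goes through.
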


\begin{proof} This is done in the proof of Theorem 8.1 in \cite{GLL}.
\end{proof}

Now we return to the early situation of Section~\ref{sec:appendix} where
$\Xb=\Proj B$ has linear $G$-action.

\begin{corollary} \label{cor:hsop} Suppose that $R$ is excellent, pictorsion,
  and $S=\Spec R$ is connected. Assume moreover that $\Xb$ is flat over $S$,
  that $\Xb_s$ is irreducible and has a GIT-stable point for all $s\in S$.
  Then $B^G$ admits a HSOP over $R$.
\end{corollary}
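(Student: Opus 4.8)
The plan is to reduce the existence of a HSOP over $R$ for $B^G$ to an application of Proposition~\ref{hsop}, by checking that $B^G$ is a homogeneous $R$-algebra whose fibers over $\Spec R$ all have the same dimension. Since $R$ is noetherian, excellent and pictorsion, and $B^G$ is of finite type over $R$ by Theorem~\ref{theorem4}(\ref{affine}), the only real content is to verify that the fibers of $\Proj(B^G) \to \Spec R$ are equidimensional of some fixed dimension $d \ge 0$. Once that is done, Proposition~\ref{hsop} applied to $C = B^G$ produces homogeneous elements $f_0, \dots, f_d \in (B^G)_+$ with $V_+(f_0, \dots, f_d) = \emptyset$, and by definition this is the desired HSOP over $R$.

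So the heart of the argument is a dimension computation for the geometric fibers of $\Yb = \Xb^{\ds}\dq G \to S$. Fix a geometric point $s \in S(k)$ with $k$ an algebraically closed field. By the homeomorphism $(\Xb_s)^{\ds}\dq G_s \to (\Xb^{\ds}\dq G)_s$ established just before Definition~\ref{def:hsop}, it is enough to compute $\dim\bigl((\Xb_s)^{\ds}\dq G_s\bigr)$, i.e. to work fiberwise over a field. First I would observe that by hypothesis $\Xb_s$ is irreducible and possesses a stable point; since $\Xb$ is flat over the connected base $S$ with geometrically irreducible fibers, $\dim \Xb_s$ is a constant, say $N$, independent of $s$. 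On the stable locus $\Xb_s^{\mathrm s}$ (which is non-empty, open and dense in $\Xb_s$ since $\Xb_s$ is irreducible), the quotient morphism $\phi_s$ is a geometric quotient whose fibers are single $G_s$-orbits, each of dimension $\dim G_s$; here $\dim G_s$ is also constant in $s$ because $G$ is smooth over $S$ with connected reductive fibers. Hence the image $\Yb_s^{\mathrm s} = \phi_s(\Xb_s^{\mathrm s})$ has dimension $N - \dim G_s =: d$, and since $\Yb_s$ is irreducible (being the image of the irreducible $\Xb_s^{\ds} \supseteq \Xb_s^{\mathrm s}$) with $\Yb_s^{\mathrm s}$ a dense open subset, $\dim \Yb_s = d$ as well. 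Thus all fibers of $\Proj(B^G) \to \Spec R$ have the same dimension $d \ge 0$.

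With equidimensionality in hand, I would invoke Proposition~\ref{hsop} with $C = B^G$: it is a homogeneous $R$-algebra (a $G$-stable homogeneous quotient sub-structure of $\mathrm{Sym}(V^\vee)$, hence generated in positive degrees), finitely generated over the noetherian pictorsion ring $R$, and by the previous paragraph $\Proj C \to \Spec R$ has all fibers of dimension $d$. The proposition then yields $f_0, \dots, f_d \in C_+ = (B^G)_+$ homogeneous with $V_+(f_0, \dots, f_d) = \emptyset$, which is exactly a HSOP over $R$ for $B^G$ in the sense of Definition~\ref{def:hsop}. I would also remark, via Remark~\ref{rem:hsop}, that these $f_i$ reduce to a HSOP of $(B \otimes_R \bar k)^{G(\bar k)}$ on every geometric fiber, so this genuinely behaves well under base change.

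The main obstacle, and the step deserving the most care, is the fiberwise dimension count—specifically justifying that $\dim \Xb_s$ and $\dim G_s$ are constant in $s$, and that the stable locus is dense in the irreducible $\Xb_s$ so that passing to the (open) stable locus does not change the dimension of the quotient. The first two constancy statements follow from flatness of $\Xb/S$ over a connected base and smoothness of $G/S$ with connected fibers; the density is automatic once $\Xb_s$ is irreducible and $\Xb_s^{\mathrm s}$ is non-empty and open. A subtler point to handle cleanly is that the fiber dimension of $\Proj(B^G)$ should be read off from $(\Xb_s)^{\ds}\dq G_s$ rather than from $(\Xb^{\ds}\dq G)_s$ directly; this is exactly what the finite bijective (hence homeomorphic) morphism recorded before Definition~\ref{def:hsop} provides, and I would cite that explicitly rather than re-prove it.
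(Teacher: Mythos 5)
Your proposal is correct and follows essentially the same route as the paper: both arguments establish that all fibers of $\Proj(B^G)\to\Spec R$ have the common dimension $\dim\Xb_s-\dim G_s$ by using the homeomorphism $(\Xb_s)^{\ds}\dq G_s\to(\Xb^{\ds}\dq G)_s$, the constancy of the fiber dimensions of $\Xb/S$ and $G/S$, and the density of the stable locus in the irreducible fiber $\Xb_s$, and then conclude by Proposition~\ref{hsop}. No gaps; the dimension count you flag as the key step is exactly the content of the paper's proof.
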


\begin{proof} All fibers of $\Xb/S$ (resp. of $G/S$) have the same dimension
  $n$ (resp. $m$). For all $s\in S$, as $\Xb_s$ is irreducible, its non-empty
  open subsets $(\Xb_s)^{\ds}$ and $(\Xb_s)^{\mathrm{s}}$ are irreducible of
  dimension $n$. Therefore $(\Xb_s)^{\mathrm{s}}\dq G_s$ has dimension
  $\dim (\Xb_s)^{\mathrm{s}} -\dim G_s=n-m$. Let
  $\Yb=\Xb^{\ds}\dq G=\Proj (B^G)$. For all $s\in S$, the fiber $\Yb_s$, which
  is homeomorphic to $(\Xb_s)^{\ds}\dq G_s$, is irreducible and contains an
  open subset of dimension $n-m$.  So $\dim \Yb_s=n-m$ for all $s\in S$. Now
  we can apply Proposition~\ref{hsop}.
\end{proof}

\subsection{Characterization of potentially good quartic reduction} \label{sec:Shah}

Let $R=\O$ be a discrete valuation ring with field of fractions $K$ and
$\Xb=\Proj B$ such that $B^G$ is of finite type over $\O$.  This last property
implies that $B^G$ admits a HSORG $\Iv$.  By definition, a geometric point
$\xs \in \Xb(k)$ is GIT-unstable (i.e. not GIT-semi-stable) if and only if
$\xs \in V_+(I)_k$ in $\Xb_k$. Then Corollary~\ref{cor:shabur} has the
following translation.

\begin{proposition} \label{prop:ssred} With the notation of
  Corollary~\ref{cor:shabur}, after a possible finite extension of $K$, there
  exists $\xfrak \in \Xb^{\ds}(\O)$ such that $\xfrak_K \in G(K) \cdot \xs$
  and $\Iv(\xfrak)$ is a minimal representative of $\Iv(\xs)$.
\end{proposition}

\begin{proof} After possibly a finite extension of $K$, let
  $\xfrak \in \Xb^{\ds}(\O)$ be as given by Corollary~\ref{cor:shabur}. As
  $V_+(I)=\emptyset$ in $\Proj B^G$, we have $\Xb^{\ds}=\cup_{f\in I} D_+(f)$,
  hence $\xfrak\in D_+(f)$ for some $f\in I$. Therefore $v(f(\xfrak))\in \O^*$
  and $\Iv(\xfrak)$ is a minimal representative of $\Iv(\xs)$.
\end{proof}

Let $d>2,\, n \geq 1$ be integers and $N={n+d \choose d}$. Let
$\Xb \simeq \P^{N}=\Proj \Z[\xv]$, the space of hypersurfaces of degree $d$ in
$\P^n$ under the classical action of $G=\SL_{{n+1},\Z}$. As $\Z$ is a PID, any
$X \in \Xb(\Z)$ is represented by an $(n\!+\!1)$-ary form $F$, unique up to
multiplication by $\pm 1$.

Note that $\Z$ is excellent and pictorsion and that the scheme $\Xb$ satisfies
the hypotheses of Corollary~\ref{cor:hsop} hence $\Z[\xv]^G$ admits a HSOP
over $\Z$. Moreover, there exists an invariant $D_{\ell} \in \Z[\xv]^G$ of
degree $\ell=(n+1)(d-1)^n$, called the \emph{discriminant}, such that any
hypersurface $F=0$ over a ring $R$ is smooth if and only if $D_{\ell}(F)$ is
invertible in $R$ \cite[p.426]{gelfand}, \cite{demazure}.  Over any discrete
valuation ring $\O$, $\O[\xv]^{G_\O}$ is finitely generated as $\O$-algebra by
Remark~\ref{quotient-ft}, and any HSOP of $\Z[\xv]^G$ then gives rise to a
HSOP of $\O[\xv]^{G_\O}$ according to remark~\ref{rem:hsop}.

\begin{corollary} \label{cor:disc0} Let $K$ be a discrete valuation field with
  valuation $v$, valuation ring $\O$ and residue field $k$ of characteristic
  $p \geq 0$.  Let $\Iv$ be a HSORG for the graded algebra
  $\Z[\xv]^{\SL_{n+1,\Z}}$ for hypersurfaces of degree $d$ in $\P^n$.  Let
  $X/K : F=0$ be a smooth hypersurface of degree $d$ in $\P^n$ and $D_{\ell}$
  the discriminant as defined above. Then $v_{I}(D_{\ell}(F))=0$ if and only
  if, after a possible finite extension of $K$, there exists a hypersurface
  $\mathcal{X}/\O$ such that $\mathcal{X}_K \simeq X$ and $\mathcal{X}_k$ is a
  smooth hypersurface of degree $d$.
\end{corollary}
\begin{proof}
  Let $X/K : F=0$ be a smooth hypersurface of $\P^N$ of degree $d$. By
  \cite[Prop.4.2]{mumford-fogarty}, $X/K$ is GIT-stable. After a possible
  finite extension of $K$, by Proposition~\ref{prop:ssred}, there exists a
  hypersurface $\mathcal{X}/\O : F_0=0$ in $\P^N$ of degree $d$, isomorphic to
  $X$ over $K$ and such that $\underline{I}(F_0)$ is a minimal representative
  over $\O$. Therefore if $v_I(D_{\ell}(F))=0$, then $v(D_{\ell}(F_0))=0$ and
  $\mathcal{X}_k$ is a smooth hypersurface of degree $d$. This proves the
  direct implication. For the converse implication, if $\mathcal{X}/\O : F_0=0$
  is smooth, necessarily in $\P^n$ of degree $d$, then
  $D_{\ell}(\bar{F}_0) \ne 0$ i.e. $v(D_{\ell}(F_0))=0$ which implies, since
  $\mathcal{X}_K \simeq X$, that $v_I(D_{\ell}(F))=0$ as well.
\end{proof}

As a particular case, we get our Theorem~\ref{th:GNHR}.  Indeed, notice that
by the discussions in \S \ref{general-facts}, if a smooth quartic curve $X/K$
has potentially good quartic reduction, then after possibly a finite extension
of $K$, $X$ is isomorphic to the generic fiber of some smooth quartic curve
over $\O$.

\begin{theorem}[See Theorem~\ref{th:GNHR}] \label{th:gnhr} Let $K$ be a
  discrete valuation field with valuation $v$, valuation ring $\O$ and residue
  field $k$ of characteristic $p \geq 0$. Let $\Iv$ be a list of invariants
  which is a HSORG for the ternary quartic forms under the action of
  $\SL_{3,\O}$.  Then a smooth plane quartic $C/K : F=0$ has potentially good
  quartic reduction if and only if $v_{I}(D_{27}(F))=0$.
\end{theorem}

\begin{remark}
  It is unclear how to effectively compute the potential smooth quartic model.
  In some examples, the extension of $K$ to be considered can be huge. For
  instance, $y^3=x^4-x$ has good quartic reduction over an extension of
  minimal degree $108$ of $\Q_3$ (Bouw-Wewers, private communication).
\end{remark}

The previous result will be effective once we have determine a HSOP. This is
the main result of Section~\ref{sec:dixm-invar-char}. Meanwhile, let us work
out a simpler case, which will also be useful for the characterization of
potentially good hyperelliptic reduction. When $p \ne 2, 3, 5$ and 7,
\cite[Prop.1.9]{LR11} shows that the set of Shioda invariants
$j_2,\ldots,j_{10}$ is a HSORG over $\O$ for the graded algebra of invariants
of binary octics under the action of $\SL_{2,\O}$. Actually, taking into
account that the vanishing of $j_2,\ldots,j_7$ implies the vanishing of the
discriminant $D_{14}$, the proof of loc. cit. shows that
$\operatorname{\underline{Sh}}=(j_2,\ldots,j_7)$ is a HSOP over $\O$ when
$p=0$ or $p>7$.

Corollary~\ref{cor:disc0} specializes in the following.

\begin{corollary}\label{cor:Shiodas}
  Let $K$ be a discrete valuation field with valuation $v$, valuation ring
  $\O$ and residue field $k$ of characteristic $p \ne 2, 3, 5$ and $7$. Let
  $C:\,y^2=f(x)$ over $K$ be a hyperelliptic curve of genus $3$ with
  $f\in\mathcal{O}[x]$. Then $C$ has potentially good reduction if and only if
  $v_{\operatorname{Sh}}(D_{14}(f))=0$, with
  $\operatorname{\underline{Sh}}=(j_2,...,j_{7})$.
\end{corollary}

\begin{remark}
  For binary octics, Basson \cite{basson} exhibits a HSOP of degree
  $(3,\, 4,\, 5,\, 6,\, 10,\, 14)$ when $p=7$ and one of degree
  $(4,\, 5,\, 6,\, 7,\, 8,\, 9)$ when $p=3$.  Similar techniques for $p=5$
  lead to a HSOP over $\O$ of degree $(4,\, 6,\, 6,\, 12,\, 14,\, 20)$. We can
  therefore extend Corollary~\ref{cor:Shiodas} to all characteristics
  different from $2$ using these sets instead.
\end{remark}

\section{Homogeneous system of parameters over $\O$}
\label{sec:homog-syst-param}

\subsection{Computation of the various HSOP}
\label{sec:dixm-invar-char}
Let $\O$ be an DVR with residue field $k$ of characteristic $p$.  In order to
make Theorem~\ref{th:GNHR} effective, we need an explicit HSOP over $\O$ for
ternary quartic forms under the action of $\SL_{3,\O}$.  We have seen in
Section~\ref{sec:Shah} that such a HSOP exists and is even defined over $\Z$.
Dixmier proves~\cite{dixmier} that the $7$ invariants $I_{3}$, $I_{6}$,
$I_{9}$, $I_{12}$, $I_{15}$, $I_{18}$ and $I_{27}$ form a HSOP over a field of
characteristic $0$. The result does not carry in characteristic $p>0$ for all
$p$ but the strategy he followed is valid in positive characteristic, although
it requires to be handled with care.

We work out the computations with the computer algebra software \Magma{}. The
invariants are given by polynomials over $\Q$ in the Dixmier-Ohno
invariants. They are found by an evaluation/interpolation strategy over
$\Z/p^n \Z$ for increasing powers $n$ (the largest computation being in degree
$81$ over $3^{19}$). We will not give details about this search here. Once
given, it is indeed ``enough'' to check that that the invariants we obtained
satisfy two conditions: 1) as polynomials in the $15$ coefficients of a
ternary quartic forms, they have \emph{integral} coefficients; 2) once reduced
modulo $p$, these polynomials form a HSOP over $\bar{\F}_p$. We could prove
these two facts for our invariants in all characteristics $p \ne 3$. Let us
say quickly some words about 1). For a given characteristic, one can transform
a general plane quartic into a quartic with only $7$ parameters by an integral
change of variables (when $p>3$, one can take Shioda's models in
\cite{shioda-quartic}). When the degree of the invariant is less or equal to
$27$, it is then possible to write down the invariant extensively in these 7
variables and check directly the integrality of each coefficient. For $p=3$,
we cannot prove integrality for the degree $54$ and $81$ invariants and we
therefore only present the result as Conjecture~\ref{conj:hsop3}. Dealing with
2) is somehow classical as we will see in the proof of the following result.

\begin{theorem}\label{th:domodp}
  Let $\O$ be a DVR with residue field of characteristic $p \neq 3$.  A HSOP
  over $\O$ for ternary quartic forms under the action of $\SL_{3,\O}$ is
  formed by
  \begin{itemize}
  \item
    $(I_3^{(2)},I_9^{(2)},I_{12}^{(2)},I_{15}^{(2)},I_{18}^{(2)},I_{24}^{(2)},D_{27})$
    when $p=2$. The large expressions can be found in
    \cite[\texttt{DixmierHSOP.txt}]{HSOPList};
  \item    \begin{math}
      ( I_{3},\  I_{6},\  I_{9}^{(5)},\  I_{12},\  J_{15}^{(5)},\  I_{18},\  I_{27} )
    \end{math}
    where $I_{9}^{(5)} = (J_9 + 3\,I_9)/5$ and
    \begin{small}
      \begin{multline*}
        J_{15}^{(5)} =  (-64\,{{I_{3}}}^{3}{I_{6}}-24\,{I_{3}}\,{{I_{6}}}^{2}+39\,{I_{3}}\,{I_{12}}-11\,{I_{3}}\,{J_{12}}+42\,{I_{6}}\,{I_{9}}-21\,{I_{6}}\,{J_{9}}-1143\,{I_{15}}+{J_{15}})/5^3\\
        +({{253}}\,{{I_{3}}}^{2}{I_{9}}-{{79}}\,{{I_{3}}}^{2}{J_{9}})/5^4\,\,\text{when }p=5;
      \end{multline*}
    \end{small}
  \item   \begin{math}
      ( I_{3},\, I_{6},\, I_{9} - J_{9},\, I_{12},\, I_{15},\, I_{18},\, I_{27} )
    \end{math}
    when $p = 7$, $19$, $47$, $277$ and $523$;
  \item  \begin{math}
      ( I_{3},\, I_{6},\, I_{9},\, I_{12},\, I_{15},\, I_{18},\, I_{27} )
    \end{math}
    otherwise.
  \end{itemize}
In particular for $p \geq 7$, the Dixmier-Ohno invariants form a
HSORG.
\end{theorem}
\begin{proof} %
  By Remark~\ref{rem:hsop}, it is enough to prove that the previous sets are
  HSOP over the algebraic closure of the residue field of $\O$.  To do this,
  we first find all ternary quartic forms $F$ in characteristic $p$ such that
  $I(F) = 0$ for all $I \in \Iv$, where $\Iv$ is the set of invariants given
  in the theorem.  We write $F$ as
  \begin{multline*}
         a_{04}\,{x_2}^{4}+a_{13}\,x_1{x_2}^{3}+a_{22}\,{x_1}^{2}{x_2}^{2}+a_{31}\,{x_1}^{3}x_2+a_{40}\,{x_1}^{4}\, +
      ( a_{03}\,{x_2}^{3}+a_{12}\,x_1{x_2}^{2}+a_{21}\,{x_1}^{2}x_2+a_{30}\,{x_1}^{3} )\,x_3\, \\+
          ( a_{02}\,{x_2}^{2}+a_{11}\,x_1x_2+a_{20}\,{x_1}^{2} )\,{x_3}^{2}\,+
      ( a_{01}\,x_2+a_{10}\,x_1 )\,{x_3}^{3}\,+  a_{00}\,{x_3}^{4}\,.
  \end{multline*}
  Then, the set $\Iv$ is a HSOP in characteristic $p$ if and only if any such
  $F$ is GIT-unstable, or equivalently $F$ has a triple point or consists of a
  cubic and an inflectional tangent line~\cite[§1.12]{mum-ens}. Let us start
  with $p \ne 2$ (and $3$).
  Since $I_{27}(F) =2^{40} D_{27}(F)= 0$, the quartic $F=0$ has a singular
  point, that we can, up to a linear change of variables, set to
  $\Pi=(0:0:1)$. Following step by step~\cite{dixmier}, we can use the
  remaining degrees of freedom to restrict to the six different cases that we
  sum up in Table~\ref{tab:singmodels} and that we will consider one after the
  other. We checked that the linear transformations behind these
  normalizations are valid over fields of characteristic $p \geqslant 5$ and
  $0$ (see~\cite[\texttt{HSOPDetailedProof.txt}]{HSOPList}).
\begin{table}[htbp]
  \centering
  \setlength{\tabcolsep}{2pt}
  \renewcommand{\arraystretch}{1.0}
  \begin{tabular}{c|l|l|l|}
    Case & \multicolumn{2}{l|}{\ Condition}& \ $F(x_1,x_2,x_3)$\\\hline\hline
    0& \multicolumn{2}{l|}{$\Pi$ of order $\geqslant 3$} &
    $
    \begin{array}{c}
      a_{04}\,{x_2}^{4}+a_{13}\,x_1{x_2}^{3}+a_{22}\,{x_1}^{2}{x_2}^{2}+a_{31}\,{x_1}^{3}x_2+a_{40}\,{x_1}^{4}+\\
      ( a_{03}\,{x_2}^{3}+a_{12}\,x_1{x_2}^{2}+a_{21}\,{x_1}^{2}x_2+a_{30}\,{x_1}^{3} )\,x_3
    \end{array}
    $\\\hline\hline
    1.1& \multirow{2}%
    {0.2\textwidth}%
    {$\Pi$ of order 2 \& $x_1=0$ is the only tangent at $\Pi$}%
    & $\begin{array}{c}\ a_{03} = 0\end{array}$ &
    $\begin{array}{c}
      a_{04}\,{x_2}^{4}+a_{13}\,{x_2}^{3}x_1+a_{22}\,{x_2}^{2}{x_1}^{2}+a_{31}\,x_2{x_1}^{3}+a_{40}\,{x_1}^{4}+\\
      ( a_{12}\,{x_2}^{2}x_1+a_{21}\,x_2{x_1}^{2}+a_{30}\,{x_1}^{3} )\, x_3+{x_1}^{2}\,{x_3}^{2}
    \end{array}$%
    \\\cline{1-1}\cline{3-4}
    1.2 & & $\begin{array}{c}\ a_{03} \neq 0\end{array}$ &
    $\begin{array}{c}
      a_{04}\,{x_2}^{4}+a_{13}\,{x_2}^{3}x_1+a_{22}\,{x_2}^{2}{x_1}^{2}+a_{31}\,x_2{x_1}^{3}+a_{40}\,{x_1}^{4}+\\
      {x_2}^{3}x_3+{x_1}^{2}{x_3}^{2}
    \end{array}$
    \\\hline\hline
    2.1& \multirow{5}%
    {0.2\textwidth}%
    {$\Pi$ of order 2 \& $x_1=0$, $x_2=0$ are both tangent at $\Pi$}%
    &
    $\begin{array}{c}
      a_{03} \neq 0\\
      \&\,a_{30} \neq 0
    \end{array}$
    &
    $\begin{array}{c}
      a_{04}\,{x_2}^{4}+a_{13}\,{x_2}^{3}x_1+a_{22}\,{x_2}^{2}{x_1}^{2}+a_{31}\,x_2{x_1}^{3}+a_{40}\,{x_1}^{4}+\\
      ( {x_2}^{3}+{x_1}^{3} ) x_3
      +x_2x_1{x_3}^{2}
    \end{array}$
    \\\cline{1-1}\cline{3-4}
    2.2 & &
    $\begin{array}{c}
      a_{03} = 0\\
      \&\,a_{30} \neq 0
    \end{array}$ &
    $\begin{array}{c}
      a_{04}\,{x_2}^{4}+a_{13}\,{x_2}^{3}x_1+a_{22}\,{x_2}^{2}{x_1}^{2}+a_{31}\,x_2{x_1}^{3}+a_{40}\,{x_1}^{4}+\\
      {x_1}^{3}x_3+x_2x_1{x_3}^{2}
    \end{array}$
    \\\cline{1-1}\cline{3-4}
    2.2 & &
    $\begin{array}{c}
      a_{03} = 0\\
      \&\,a_{30} = 0
    \end{array}$ &
    $\begin{array}{c}
      a_{04}\,{x_2}^{4}+a_{13}\,{x_2}^{3}x_1+a_{22}\,{x_2}^{2}{x_1}^{2}+a_{31}\,x_2{x_1}^{3}+a_{40}\,{x_1}^{4}+\\
      x_2x_1{x_3}^{2}
    \end{array}$\\\hline\hline
  \end{tabular}
  \smallskip
  \caption{Singular quartics in characteristic $p \geqslant 5$ and $0$}
  \label{tab:singmodels}
\end{table}

\noindent$\bullet$ Case $0$ yields quartics with $\Pi$ being a triple point
and these curves are GIT-unstable.  \smallskip

\noindent$\bullet$ Case $1.1$ is also easily manageable. We use the computer
algebra software \Magma{} to shortcut some cumbersome computations (that
Dixmier made by hand over the rationals !). In particular, we compute a
Gr\"obner basis \emph{over $\Z$} (based on the computation of normal Hermite
forms of integral Macaulay matrices) of the algebraic system
$\{\,I(F) = 0 \,\}_{I\in\Iv}$ for the following lexicographic order:
$$a_{30}>a_{22}>a_{21}>a_{31}>a_{13}>a_{40}>a_{04}.$$ We get
\begin{math}
  \{{a_{12}}^{8}+144\,{a_{04}}^{4},\ %
  {a_{12}}^{4}a_{04}+144\,{a_{04}}^{3},\ %
  2\,{a_{12}}^{2}+12\,a_{04},\ %
  360\,{a_{04}}^{3},\ %
  720\,{a_{04}}^{2}\}\,.
\end{math}
In characteristic $p>5$ and $p=0$, we thus have $a_{04} = a_{12} = 0$\,. This
yields quartics $F=0$ which are the union of a cubic curve and an inflectional
tangent at $\Pi$, which we can assume to be $x_1=0$, thus GIT-unstable
quartics.  The exceptional prime $p=5$ will be handled later on its
own.\smallskip

\noindent$\bullet$ Case $1.2$ is done in a similar manner. Gr\"obner basis can
also be computed over $\Z$.  When $p>5$ and $p=0$, we derive from it that
$a_{04}=a_{31}=a_{22}=0$, and
\begin{math}
  \{{a_{13}}^{2}+9953670985\,a_{40},\ 2^{4}\cdot 3^{13}\cdot 5^{2} \cdot 7\cdot 19\cdot 47 \,a_{40}\}\,.
\end{math}
Thus, for $p\neq 2$, $3$, $5$, $7$, $19$ and $47$, we have
$a_{13}=a_{40}=0$. This yields quartics of the same type as in Case $1.1$, and
thus, GIT-unstable. These exceptional primes will be handled later on their
own.\smallskip

\noindent$\bullet$ Case $2.1$ is the most difficult case, at least
computationally speaking. From $I_3(F) = I_{6}(F) = 0$, we easily deduce that
$a_{22}=-9/2$ and $a_{04}a_{40}=9/4$. Now, the equations $I(F)=0$ for the
other invariants $I$ yield more algebraic relations between the remaining
variables, but of higher degrees, and we were not able anymore to compute an
integral Gr\"obner basis. Instead, we find sufficient conditions on the
remaining $a_{ij}$ by solving with resultants. We eliminate, first $a_{04}$,
then $a_{40}$ and finally $a_{31}$ (resp.  $a_{13}$, $a_{31}$ and $a_{04}$),
and obtain a polynomial of degree 48 in $a_{13}$ (resp. of degree 28 in
$a_{40}$),
\begin{displaymath}
  N_{13}\, (4\,a_{13} + 15)^{16}\, (16\,a_{13}^2 - 60\,a_{13} + 225)^{16}\ \ \ \ %
  (\text{resp. }N_{40}\, a_{40}^4\, (2\,a_{40} + 3)^8\, (4a_{40}^2 - 6\,a_{40} + 9)^8\,)\,,
\end{displaymath}
where $N_{13}$ (resp. $N_{40}$) is an integer, too large to be completely
factorized. We can have $N_{13}$ and $N_{40}$ both equal to zero modulo $p$,
but this can only happen for the few prime divisors $p$ of
$\gcd(N_{13}, N_{40})$. These cases can be easily studied independently (see
the end of the proof). Let us assume from now that it is not the case, i.e.
$p \ne 2$, $3$, $5$, $7$, $13$, $43$, $47$, $89$, $277$ and $523$. Either
$N_{13}\ne0$ or $N_{40}\ne0$.
\begin{itemize}[label=\raisebox{0.25ex}{\tiny$\bullet$}]
\item If $N_{13} \not \equiv  0 \bmod p$, then we have two possibilities\,:
  \begin{itemize}
  \item $4\,a_{13}+15=0$\,, and with a couple of additional resultants, we show
    that $2\,a_{40} + 3 = 0$ when $p \ne 2$, $3$, $13$, $1613$, $3469$ and 6
    other 5-24 digit primes;
  \item $16\,a_{13}^2-60\,a_{13}+225=0$\,, and this implies that $4a_{40}^2 -
    6\,a_{40} + 9 = 0$ when additionally $p \ne 37$, $61$ and $210037$.
  \end{itemize}
\item If $N_{40}\not \equiv 0 \bmod p$, then we have the two
  symmetric possibilities\,:
  \begin{itemize}
  \item $2\,a_{40} + 3 = 0$\,, which implies that $4\,a_{13}+15=0$ when
    additionally $p \ne 53$, $4969$ and 3 new 7-19 digit primes;
  \item $4a_{40}^2 - 6\,a_{40} + 9 = 0$\,, which implies
    $16\,a_{13}^2-60\,a_{13}+225=0$ when $p$ is not one of the primes above.
  \end{itemize}
\end{itemize}
In other words, except modulo the few primes $p$ that we have listed above, we
have either
\begin{multline*}
  a_{31} = -15/4,\ a_{13} = -15/4,\ a_{40} = -3/2 \text{ and }a_{04} = -3/2\text{ or }\\
  a_{04}^2 - 3/2\,a_{04} + 9/4=0,\ a_{40} = -a_{04} + 3/2,\ a_{13} =
  -5/2\,a_{04} + 15/4\text{ and }a_{31} = 5/2\,a_{04}\,.
\end{multline*}
In both cases, this yields quartics $F=0$ which are the union of a cubic curve
and an inflectional tangent, thus GIT-unstable.\smallskip

\noindent$\bullet$ Cases~2.2 and~2.3 are much easier and can be handled again
with integral Gr\"obner basis computations. They do not yield additional
exceptional primes. We omit the details. \smallskip

To end the proof, we have to consider each exceptional prime $p>3$ one by one,
and check over $\bar{\F}_{p}$ the GIT-stability of the quartics $F$ that
cancel the given set of invariants. Since we have to deal now with Gr\"obner
basis computations in finite fields, and no longer over $\Z$, these
computations can be easily done with \Magma.

For $p=2$, a similar work can be done to obtain the same six cases of
Table~\ref{tab:singmodels}, with slightly distinct forms $F$ (see
Table~\ref{tab:singmodelschar2} and ~\cite[File
\texttt{HSOPDetailedProof.txt}]{HSOPList}). Then, a direct Gr\"obner basis
computations easily yields the subset of these models that cancel our set of 7
invariants. The associated curves either have a singular point of order 3, or
are the union of a cubic curve and an inflectional tangent. They thus all are
GIT-unstable and the set is a HSOP.
\end{proof}

\begin{table}[htbp]
  \centering
  \setlength{\tabcolsep}{2pt}
  \renewcommand{\arraystretch}{1.0}
  \begin{tabular}{c|l|l|l|}
    Case & \multicolumn{2}{l|}{\ Condition}& \ $F(x_1,x_2,x_3)$ \\\hline\hline
    0& \multicolumn{2}{l|}{$\Pi$ of order $\geqslant 3$} &
    $
    \begin{array}{c}
      a_{04}\,{x_2}^{4}+a_{13}\,x_1{x_2}^{3}+a_{22}\,{x_1}^{2}{x_2}^{2}+a_{31}\,{x_1}^{3}x_2+a_{40}\,{x_1}^{4}+\\
      ( a_{03}\,{x_2}^{3}+a_{12}\,x_1{x_2}^{2}+a_{21}\,{x_1}^{2}x_2+a_{30}\,{x_1}^{3} )\,x_3
    \end{array}
    $\\\hline\hline
    1.1& \multirow{2}%
    {0.2\textwidth}%
    {$\Pi$ of order 2 \& $x_1=0$ is the only tangent at $\Pi$}%
    & $\begin{array}{c}\ a_{03} = 0\end{array}$ &
    $\begin{array}{l}
       x_1 \: (\, a_{13}\,x_2^3+a_{22}\,x_1\,x_2^2+a_{31}\,x_1^2\,x_2+a_{40}\,x_1^3  +\\
       \ \ \ \ \ \ \ \ \ \ \ \ \ \ \ \ \ \ \ \ \ \ \ \ (a_{21}\,x_1\,x_2+a_{30}\,x_1^2)\,x_3 + x_1\,x_3^2 \,)\\
    \end{array}$%
    \\\cline{1-1}\cline{3-4}
    1.2 & & $\begin{array}{c}\ a_{03} \neq 0\end{array}$ &
    $\begin{array}{c}
       x_2 \: (\, a_{31}\,x_1^3 + x_2^2\,x_3 + x_1\,x_3^2   \,)
    \end{array}$
    \\[0.2cm]\hline\hline
    2.1& \multirow{5}%
    {0.2\textwidth}%
    {$\Pi$ of order 2 \& $x_1=0$, $x_2=0$ are both tangent at $\Pi$}%
    &
    $\begin{array}{c}
      a_{03} \neq 0\\
      \&\,a_{30} \neq 0
    \end{array}$
    &
      $\begin{array}{c}
         x_3 \: (\,  x_2^3 + a_{12}\,x_1\,x_2^2 + (a_{12} + 1)\,x_1^2\,x_2 + x_1^3 + x_1\,x_2\,x_3\,)\\
         \text{ or }\ \ x_3 \: (\,x_2^3 + x_1\,x_2^2 + x_1^2\,x_2 + x_1^3  + x_1\,x_2\,x_3\,)
    \end{array}$
    \\\cline{1-1}\cline{3-4}
    2.2 & &
    $\begin{array}{c}
      a_{03} = 0\\
      \&\,a_{30} \neq 0
    \end{array}$ &
    $\begin{array}{l}
       x_1 \: (\, a_{13}\,x_2^3+a_{22}\,x_1\,x_2^2+a_{31}\,x_1^2\,x_2+a_{40}\,x_1^3 \\
       \ \ \ \ \ \ \ \ \ \ \ \ \ \ \ \ \ \ \ \ \ \ \ \ + (a_{21}\,x_1\,x_2+x_1^2)\,x_3  + x_2\,x_3^2 \,)\\
    \end{array}$
    \\\cline{1-1}\cline{3-4}
    2.2 & &
    $\begin{array}{c}
      a_{03} = 0\\
      \&\,a_{30} = 0
    \end{array}$ &
    $\begin{array}{l}
       x_1 \: (\, a_{13}\,x_2^3+a_{22}\,x_1\,x_2^2+a_{31}\,x_1^2\,x_2+a_{40}\,x_1^3  \\
    \ \ \ \ \ \ \ \ \ \ \ \  \ \ \ \ \ \ \ \ \ \  \  \ \ + a_{21}\,x_1\,x_2\,x_3 + x_2\,x_3^2  \,)
    \end{array}$\\\hline\hline
  \end{tabular}
  \smallskip
  \caption{Singular quartics in characteristic $p=2$}
  \label{tab:singmodelschar2}
\end{table}

We encountered more difficulties to find a HSOP in characteristic
$3$. Especially, it is only after a lengthy computation that we finally manage
to find an invariant, of degree 81, which does not vanish on the orbit of the
GIT-semi-stable quartic $-x_2^3\,x_3+x_1^4+x_1^2\,x_3^2$. This finally yields
a HSOP formed of explicit invariants $I_3^{(3)}$, $I_{27}^{(3)}=I_{27}$,
$J_{27}^{(3)}$, $I_{36}^{(3)}$, $J_{36}^{(3)}$, $I_{54}^{(3)}$ and
$I_{81}^{(3)}$ (we also refer to~\cite[\texttt{DixmierHSOP.txt}]{HSOPList} for
their polynomial expressions in the Dixmier-Ohno invariants).

We do not have a proof that the expressions of $I_{54}^{(3)}$ and
$I_{81}^{(3)}$ in terms of the $15$ coefficients are polynomials with integer
coefficients. Because of the degree of these expressions, it is simply
impossible to even write this expression which has about $2^{56}$ monomials.
We hope that this integrality property can be proved in a near future in a
different way. Meanwhile, supported by extensive experiments that we made on
random and one parameter families of quartics over $\Z$, we state the
following conjecture.
\begin{conjecture}[Integrality hypothesis]\label{conj:hsop3}
  The invariants $I_{54}^{(3)}$ and $I_{81}^{(3)}$ have integral coefficients.
\end{conjecture}

\begin{theorem}\label{th:domod3}
  Let $\O$ be a DVR with residue field of characteristic $p=3$.  Under Conjecture~\ref{conj:hsop3}, the
  invariants $I_3^{(3)}$, $I_{27}^{(3)}=I_{27}$, $J_{27}^{(3)}$, $I_{36}^{(3)}$,
  $J_{36}^{(3)}$, $I_{54}^{(3)}$ and $I_{81}^{(3)}$ form a HSOP over $\O$ for ternary
  quartic forms under the action of $\SL_{3,\O}$.
\end{theorem}
\begin{proof}
  The strategy follows the same lines as  for $p=2$. One shall this time replace
  Table~\ref{tab:singmodels} by  Table~\ref{tab:singmodelschar3} (see ~\cite[File
  \texttt{HSOPDetailedProof.txt}]{HSOPList} for a detailed proof of these cases).
\begin{table}[htbp]
  \centering
  \setlength{\tabcolsep}{2pt}
  \renewcommand{\arraystretch}{1.0}
  \begin{tabular}{c|l|l|l|}
    Case & \multicolumn{2}{l|}{\ Condition}& \ $F(x_1,x_2,x_3)$ \\\hline\hline
    0& \multicolumn{2}{l|}{$\Pi$ of order $\geqslant 3$} &
    $
    \begin{array}{c}
      a_{04}\,{x_2}^{4}+a_{13}\,x_1{x_2}^{3}+a_{22}\,{x_1}^{2}{x_2}^{2}+a_{31}\,{x_1}^{3}x_2+a_{40}\,{x_1}^{4}+\\
      ( a_{03}\,{x_2}^{3}+a_{12}\,x_1{x_2}^{2}+a_{21}\,{x_1}^{2}x_2+a_{30}\,{x_1}^{3} )\,x_3
    \end{array}
    $\\\hline\hline
    1.1& \multirow{2}%
    {0.2\textwidth}%
    {$\Pi$ of order 2 \& $x_1=0$ is the only tangent at $\Pi$}%
    & $\begin{array}{c}\ a_{03} = 0\end{array}$ &
    $\begin{array}{l}
       x_1 \: (\, a_{13}\,x_2^3+a_{22}\,x_1\,x_2^2+a_{31}\,x_1^2\,x_2+a_{40}\,x_1^3  +\\
       \ \ \ \ \ \ \ \ \ \ \ \ \ \ \ \ \ \ \ \ \ \ \ \ (a_{21}\,x_1\,x_2+a_{30}\,x_1^2)\,x_3 + x_1\,x_3^2 \,)\\
    \end{array}$%
    \\\cline{1-1}\cline{3-4}
    1.2 & & $\begin{array}{c}\ a_{03} \neq 0\end{array}$ &
    $\begin{array}{c}
       x_3 \: (\, x_2^3 + a_{30}\,x_1^3 + x_1^2\,x_3   \,)
    \end{array}$
    \\[0.2cm]\hline\hline
    2.1& \multirow{5}%
    {0.2\textwidth}%
    {$\Pi$ of order 2 \& $x_1=0$, $x_2=0$ are both tangent at $\Pi$}%
    &
    $\begin{array}{c}
      a_{03} \neq 0\\
      \&\,a_{30} \neq 0
    \end{array}$
    &
      $\begin{array}{c}
         x_3 \: (\, x_2^3 + x_1^3 + x_1\,x_2\,x_3 \,)
    \end{array}$
    \\\cline{1-1}\cline{3-4}
    2.2 & &
    $\begin{array}{c}
      a_{03} = 0\\
      \&\,a_{30} \neq 0
    \end{array}$ &
    $\begin{array}{l}
       x_1 \: (\, a_{31}\,x_1^2\,x_2 + a_{40}\,x_1^3 + x_1^2\,x_3 + x_2\,x_3^2  )
    \end{array}$
    \\\cline{1-1}\cline{3-4}
    2.2 & &
    $\begin{array}{c}
      a_{03} = 0\\
      \&\,a_{30} = 0
    \end{array}$ &
    $\begin{array}{c}
       x_1\:(\, a_{31}\,x_1^2\,x_2 + a_{40}\,x_1^3 + x_2\,x_3^2 \,)\\
       \text{ or }\ \ x_2\:(\, a_{04}\,x_2^3 + a_{13}\,x_1\,x_2^2 + x_1\,x_3^2 \,)
    \end{array}$\\\hline\hline
  \end{tabular}
  \smallskip
  \caption{Singular quartics in characteristic $p =3$}
  \label{tab:singmodelschar3}
\end{table}

\end{proof}

\begin{remark}\label{rmk:nhspecial} With the same assumptions as
  Theorem~\ref{th:gnhr}, the invariants of the special fiber $\mathcal{C}_k$
  are the reduction modulo $\pi$ of a minimal representative of
  $\Iv(F)$. Hence, if an algorithm to reconstruct from the invariants is
  available, one can get an equation for $ \mathcal{C}_k$. This is in
  particular the case for generic plane quartics in characteristic $p>7$ when
  $\DOv \subset \Iv$ using \cite{LRS16}.
\end{remark}

\begin{example} \label{ex:cm} Let us consider the quartic $X_1 : F_1=0$ with
\begin{multline*}
  F_1 = - 4169 x_1^4 - 956 x_1^3 x_2 + 7440 x_1^3 x_3 + 55770 x_1^2 x_2^2 +
  43486 x_1^2 x_2 x_3 + 42796 x_1^2 x_3^2 - 38748 x_1 x_2^3 - 30668 x_1 x_2^2
  x_3\\ + 79352 x_1 x_2 x_3^2 - 162240 x_1 x_3^3 + 6095 x_2^4 + 19886 x_2^3
  x_3 - 89869 x_2^2 x_3^2 - 1079572 x_2 x_3^3 - 6084 x_3^4
\end{multline*}
from \cite[Sec.5]{KLLRSS17}. The values at $F_1$ of its Dixmier-Ohno
invariants are
\begin{align*}
I_3(F_1) &= 2^{-2} \cdot 3^{-2} \cdot 5 \cdot 13^2 \cdot  43 \cdot 108879238253 \\
I_6(F_1) &= 2^{-6} \cdot 3^{-6} \cdot 5 \cdot 13^4 \cdot 33879904575927947128535137 \\
& \ldots \\
J_{21}(F_1) &=  2^{-18} \cdot 3^{-16} \cdot 5 \cdot 7 \cdot 11\cdot 13^{14} \cdot 89 \cdot 11383 \cdot \textrm{huge prime} \\
I_{27}(F_1) &=  - 2^{30}  \cdot 5^{12} \cdot 7^9 \cdot 13^{18} \cdot 37^{14} \cdot 15187^{14}.
\end{align*}
For any prime  $p$ not dividing $D_{27}(F_1)=2^{40} \cdot I_{27}(F_1)$, $X_1$ has good quartic reduction.
Let us now compute the valuation at $p$ of the various HSOPs we found and the normalized valuation of  $D_{27}(F_1)$ with respect to the HSOP.\smallskip
\begin{center}
\begin{tabular}{|c|c|c|}
\hline
$p$ &  valuations at $p$ &  normalized val. of $D_{27}(F_1)$ \\
\hline
$2$ & $2, 4, 6, 2, 11, 0, 70$ & ${70}/{27}$   \\
$5$ & $1, 1, 0, 2, 1, 2, 12 $ & ${12}/{27}$  \\
$7$ & $0, 0, 0, 0, 0, 0, 9$ & ${9}/{27}$ \\
$13$ &  $2, 4, 6, 8, 10, 12, 18$ & $\frac{18}{27} -  \min\left(\frac{2}{3},\frac{4}{6},\frac{6}{9},\frac{8}{12},\frac{10}{15},\frac{12}{18},\frac{18}{27}\right)=0$  \\
$37$ & $0, 0, 0, 0, 0, 0, 14$ & ${14}/{27}$ \\
$15187$  & $0, 0, 0, 0, 0, 0, 14$ & ${14}/{27}$\\
\hline
\end{tabular}
\end{center}
\smallskip

By using Theorem~\ref{th:GNHR}, it is easy to see that among these primes,
$X_1$ has potentially good quartic reduction if and only if $p=13$.  A minimal
representative of $\DOv(F_1)$ reduces modulo $13$ to
$( 9, 2, 10, 5, 12, 6, 1, 9, 3, 3, 10, 8, 11 )$. Using~\cite{LRS16}, we find
that they are the invariants of the smooth plane quartic
\begin{math}
  x_1^3\,x_3 + x_1^2\,x_2^2 + x_2^3\,x_3 + 4\,x_3^4=0\,.
\end{math}
We will resume in Example~\ref{ex:X1} the study of $X_1$ and see for which
primes among $p=2,5,7,37$ and $15187$, the reduction is not potentially good.
\end{example}

\begin{remark} \label{rem:nonlift}
  Finding a set of generators for the algebra of invariants in small
  characteristics $p$ to replace or complete the Dixmier-Ohno invariants is,
  as far as we know, an open problem. Notice that the situation is more subtle
  than for the HSOP as there may be invariants which do not come from
  reduction of an invariant in characteristic $0$. Over $\bar{\F}_3$, the
  degree-6 homogeneous component is not of dimension $2$ as in characteristic
  $0$, but of dimension $3$, generated by $i_3^2$, $i_6$ and $i_6'$ where
  \begin{displaymath}
 i_3 =  3^3\ I_3,\quad   i_6 = 3^2\ (18\,{I_{6}}-{{I_{3}}}^{2})\ \text{ and }\
 i'_6 = 3^5\ (9\,{{{I_{9}}}-{{I_{3}}}^{3})\,/\,{{i_{3}}}}.
  \end{displaymath}
  Notice that these expressions must be understood as the reduction of the
  polynomial expressions in the coefficients of a general ternary quartic form
  over $\Z$.
\end{remark}

\subsection{The case of Picard curves} \label{sec:Picard} As an other
illustration, let us look at the well-studied family of Picard
curves~\cite{BBW17, Hol, KLS18, kowe, LaSo}. Since a Picard curve $C/K$ cannot
be hyperelliptic (see for instance \cite[p.13]{harris}), the reduction of the
stable model of $C$ is either a non-hyperelliptic (Picard) curve or
singular. Thus, we can get a complete characterization of the type of
potential reduction with Theorem~\ref{th:GNHR}. In the present section, we go
one step further and translate these conditions on the valuation of the
involved invariants in human readable conditions. This study is split into
several sub-cases. In order to shorten it, we exclude the case where the
characteristic of $K$ is $3$ (but the residue field can be of characteristic
$3$).

After a possible finite extension of $K$, we extend to $\O$ the cyclic
covering to $\P^1/K$, and we can even assume that $C$ has a model
$\Cc_0 : F_0=0$ over $\O$ where
\begin{align}\label{eq:generalPicard}
\begin{split}
  F_0&=-x_2^3x_3+x_1^4+bx_1^2x_3^2+c x_1x_3^3+d x_3^4,\; \textrm{when }
  \textrm{Char}(K)\ne 2,\\%
  F_0&=-x_2^3x_3+x_1^4+ax_1^3 x_3+bx_1^2x_3^2+cx_1 x_3^3+d x_3^4,\;
  \textrm{when } \textrm{Char}(K)=2,
\end{split}
\end{align}
with $a,b,c,d \in \O$.  Note that we could for instance let $d=0$ in the
second model but the transformation to get it may require a larger extension
of $K$ and we will not need this to prove our result.

To simplify the expressions, we introduce three invariants of binary quartic
forms
${a_4}\,{x}^{4}+{a_3}\,{x}^{3}\,z+{a_2}\,{x}^{2}\,{z}^{2}+{a_1}\,x\,{z}^{3}+{a_0}\,{z}^{4}$
under the action of $\SL_{2,\O}$, namely
\begin{eqnarray*}
  q_2 &=& 12\,{a_0}\,{a_4}-3\,{a_1}\,{a_3}+{{a_2}}^{2}\,,\\
  q_3 &=& 72\,{a_0}\,{a_2}\,{a_4}-27\,{a_0}\,{{a_3}}^{2}-27\,{{a_1}}^{2}{a_4}+9\,{a_1}\,{a_2}\,{a_3}-2\,{{a_2}}^{3},
\end{eqnarray*}
and the discriminant $D_6$ which satisfies the relation
$3^3 \cdot D_6 = 4\,q_2^3 - q_3^2$\,.
Let us start with the case $p \ne 2$.

\begin{theorem} \label{ex:picard} Let $K$ be a discrete valuation field with
  valuation $v$, valuation ring $\O$ and a uniformizer $\pi$. Let
  $k=\Oc/\langle \pi \rangle$ be the residue field of characteristic $p \ne 2$
  and $3$.  The curve $\Cc_0 : -x_2^3x_3+G_0(x_1,x_3)=0$ where
  $G_0=x^4+ b x^2 z^2+cx z^3+d z^4=0$ has potentially good quartic reduction
  if and only if
  \begin{equation} \label{eq:valuations}
    \min(6 v(b),3 v(d)) \geq v(D_6(G_0))\,.
  \end{equation}
  A stable model $\Cc$ is given by
  $\mathcal{C}:\,-x_2^3x_3+G(x_1,x_3)=0$ where $$G=x_1^4+b/\mathfrak{p}^6\,x_1^2x_3^2+c/\mathfrak{p}^9\,x_1x_3^3+d/\mathfrak{p}^{12}\,x_3^4= 0,$$ $\mathfrak{p}=\pi^{v(D_6(G_0))/36}$, and the map
  $\mathcal{C}\rightarrow\mathcal{C}_0:\,(x_1:x_2:x_3)\mapsto(\mathfrak{p}^3x_1:\mathfrak{p}^4x_2:x_3)$.
\end{theorem}
\begin{proof}
 We have
$$v_6:=D_6(G_0)= 2^8 \cdot 3^3 d^3- 2^7 \cdot 3^3 b^2 d^2+ 2^4 \cdot 3^3  b^4
d+c^2 (2^4 \cdot 3^5 b d -3^6 c^2-2^2 \cdot 3^3 b^3).$$ Assume the
conditions~\eqref{eq:valuations}, we first want to prove that $G$ is defined
over $\O$, which boils down to $4 v(c) \geq v_6$.  From the expression of
$D_6(G_0)$ above, the conditions imply
$$2 v(c) + v(2^4 \cdot 3^5 b d -3^6 c^2-2^2 \cdot 3^3 b^3) \geq v_6.$$ If
$v(c) < v_6/4$, then $v(2^4 \cdot 3^5 b d -3^6 c^2-2^2 \cdot 3^3 b^3) < v_6/2$
and we get a contradiction.
Hence, the equation of $\mathcal{C}$ has integral coefficients and its discriminant $$D_{27}(-x_2^3x_3+G) = 3^9  D_6(G)^2 = 3^9 D_6(G_0)^2/\mathfrak{p}^{72}$$ has valuation $0$. $\Cc$ is therefore a model of $\Cc_0$ with good quartic reduction.\\
Conversely, let us assume that $\mathcal{C}_0$ has potentially good quartic
reduction. If we use the same notation for the invariants and their values at
$F_0$ or $G_0$, a quick computation yields
\begin{equation}\label{eq:2}
  \begin{array}{lcl}
    0 & = & I_3 = I_6 = I_{12} = J_{12} = I_{15} = J_{15} = I_{21} = J_{21}\,,\\
    I_9   & =& 2^{-12} \cdot 3^{-4}\ (8\,b\,{q_3}+81\,{{q_2}}^{2})\,,\\
    J_9   & =& 2^{-12} \cdot 3^{-4}\ (16\,b\,{q_3}+27\,{{q_2}}^{2})\,,\\
    I_{18} &=& 2^{-23} \cdot 3^{-6}\ (108\,{b}^{2}\,{{q_2}}^{3}+33\,b\,{q_3}\,{{q_2}}^{2}+8\,{{q_2}}^{4}-54\,{D_6}\,{q_2})\,,\\
    J_{18} &=& 2^{-23} \cdot 3^{-7}\ (36\,{b}^{2}\,{{q_2}}^{3}+51\,b\,{q_3}\,{{q_2}}^{2}+16\,{{q_2}}^{4}-108\,{D_6}\,{q_2})\,,\\
    I_{27} &=& 2^{-40} \cdot 3^9 \ \ {D_6}^2\,.
  \end{array}
\end{equation}
When $p \ne 5$, by the criterion of Theorem~\ref{th:GNHR} used with the $\DO$,
we get that $v(I_9) \geq v(I_{27})/3$ and $v(J_9) \geq v(I_{27})/3$. Since
$q_2^2 = 2^{12}\cdot3\cdot5^{-1}\,(2\,I_9-J_9)$, we get that
$v(q_2) \geq v(D_6)/3$.  Furthermore since $3^3\,D_6 = 4\,q_2^3 - q_3^2$,
$v(q_3) \geq v(D_6)/2$.  Specifically, at least one of these two inequalities
is an equality.

If for instance $2\,v(\,q_3\,) = v(D_6)$, since
$b\,q_3 = 2^{9}\cdot3^{4}\cdot5^{-1}\,(3\,J_9-I_9)$,
$v(3\,J_9-I_9)/3 \geq v(I_{27})$ implies $3 v(b\,q_3) \geq 2 \,v(D_6)$, or
equivalently $6\,v(b) \geq 4\,v(D_6) - 6v(q_3) = v(D_6)$\,.  Moreover, since
$q_2=b^2+12 d$ and $v(q_2) \geq v(D_6)/3$, we get $v(d) \geq v(D_6)/3$.

Otherwise, if $3\,v(\,q_2\,) = v(D_6)$, consider the equality
\begin{displaymath}
  2^{21} \cdot 3^4 \cdot 5^{-1} \cdot (2\,I_{18}-3\,J_{18} +
  (I_{9}-3\,J_9)\,(2\,I_9-J_9)/20 ) = b^2\,q_2^3.
\end{displaymath}
We find $3\,v(b^2\,q_2^3) \geq 2\,v(I_{27})$, and thus
$6\,v(b) \geq v(D_6)$ as before and we finish the proof in the same way for $d$.\smallskip

When $p=5$, we use another set of invariants from Theorem~\ref{th:domodp}, in
particular $J_9^{(5)} = (J_9 - 2\,I_9)/5$. Similarly \begin{math} q_2^2 =
  -2^{12} \cdot 3\:J_9^{(5)}\text{ and }b\,q_3 =2^{9}\cdot3^{4}
  \:(\,2\,\,J_9^{(5)} + I_9^{(5)}\,)\,.
\end{math}
We can finish the proof in the same way.
\end{proof}

We now turn to the case $p=2$ and split it into two sub-cases according to $\textrm{Char}(K)=2$ or not.
\begin{theorem} \label{ex:picard2} Let $K$ be a discrete valuation field with
  valuation $v$, valuation ring $\O$ and a uniformizer $\pi$. Let
  $k=\Oc/\langle \pi \rangle$ be the residue field of characteristic $p=2$. %
\begin{itemize}
\item Assume that $\Char(K) = 0$. The curve
  $\Cc_0 : -x_2^3x_3+G_0(x_1,x_3)=0$ where
  $G_0=x^4+b x^2z^2+c x z^3+d x^4$ has potentially good quartic reduction if
  and only if
  \begin{equation}\label{eq:valuations2}
    6\,v(2^3 b) \geq v(D_6(G_0))\ \text{ and }\ %
    3\,v(q_2) \geq v(D_6(G_0))\,.
  \end{equation}

\item Assume that $\Char(K) =2$. The curve $\Cc_0 : -x_2^3x_3+G_0(x_1,x_3)=0$
  where $G_0=x^4+ax ^3 z+bx ^2 z^2+cx  z^3+ d z^4$ has potentially good quartic
  reduction if and only if
  \begin{equation}\label{eq:valuations2.2}
    12\,v(a) \geq v(D_6(G_0))\ \text{ and }\ %
    3\,v(q_2) \geq v(D_6(G_0))\,.
  \end{equation}
\end{itemize}
\end{theorem}
\begin{proof}
  We start with the case $\Char(K) = 0$.  We therefore use
  $G_0=x^4+b x^2 z^2+c x z^3+d z^4=0$ and the HSOP
  ${\DO^{(2)}} = \{ I_3^{(2)},\, I_{12}^{(2)},\,\ldots, I_{27}^{(2)}\}$, given
  in Theorem~\ref{th:domodp} for $p=2$. If we use the same notation for the
  invariants and their values at $F_0$ or $G_0$, a quick computation
  yields \begin{displaymath}
    \begin{array}{rcl}
      0 &=& I_3^{(2)} = I_{12}^{(2)} = I_{15}^{(2)} =  I_{24}^{(2)}\,,\\
      I_9^{(2)} &=& 2^{3}\cdot3^{4}\:b\,q_3 + 3^{8}\:q_2^2\,,\\
      I_{18}^{(2)} &=& 2^{6}\cdot3^{14}\:b^2\,q_2^3 + 2^{4}\cdot3^{12}\cdot11\:b\,q_3\,q_2^2 + 2^{5}\cdot3^{11}\:q_3^2\,q_2\,,\\
      I_{27}^{(2)} &=&   D_{27} = 3^9 D_6^2 \,.
    \end{array}
\end{displaymath}
Let us first assume the conditions~\eqref{eq:valuations2}. Since
$3^3 \cdot D_6 = 4 q_2^3-q_3^2$, they imply that $v(q_3)=v(D_6)/2$. Hence
$v(I_9^{(2)}) \geq v(D_6^2)/3$, $v(I_{18}^{(2)}) \geq 2/3 v(D_6^2)$ and
therefore $v_{\DO^{(2)}}(\,D_{27}\,)=0$. By Theorem~\ref{th:GNHR}, $\Cc_0$ has
potentially good quartic reduction.

Now, assume that $\Cc_0$ has potentially good quartic reduction. As in the
proof of Theorem~\ref{ex:picard}, a trick to shortcut the computations with
the valuations is to introduce more invariants than the ones in the HSOP. Here
we make use of the invariant $J_9^{(2)} = 2^{12} \cdot 3^7 J_9$ which is
defined over $\O$. From equation~\eqref{eq:2}, it is easy to check that
 $$q_2^2 = 3^{-7} \cdot 5^{-1}\,(\,2\,I_9^{(2)} - 3\,J_9^{(2)}\,)\text{ and
  }2^3\,b\,q_3 = 3^{-4} \cdot 5^{-1}\,(\,3^2\,\,J_9^{(2)} -
  I_9^{(2)}\,)\,.
$$
Theorem~\ref{th:GNHR} applied to this broader set of invariants translates
into $$v(q_2^2) = v(2 I_9^{(2)} - 3 J_9^{(2)}) \geq v(D_{27}) =v(D_6^2)/3$$
and so we get the second condition. Similarly, $v(2^3\,b q_3) \geq 2
v(D_6)/3$. Since $v(D_6)=v(4 q_2^3-q_3^2)$ implies as before
$v(q_3)=v(D_6)/2$, we get $v(2^3\,b) \geq v(D_6)/6$.\\

It remains to consider the case $\Char(K)=2$ with the model
$G_0=x^4+ax^3z +bx^2 z^2+cx z^3+d z^4$.  We get
\begin{displaymath}
  \begin{array}{rcl}
    0 &=& I_3^{(2)} = I_{12}^{(2)} = I_{15}^{(2)} =  I_{24}^{(2)}\,,\\
    I_9^{(2)} &=& a^2 q_3 + q_2^2, \\
    I_{18}^{(2)} &=& a^4 q_2^3, \\
    I_{27}^{(2)} &=&   D_{27} = D_6^2 = q_3^4\,.
  \end{array}
\end{displaymath}
When the conditions~\eqref{eq:valuations2.2} are satisfied, it is easy to
check that $v(I_9^{(2)}) \geq v(D_6^2)/3$ and
$v(I_{18}^{(2)}) \geq 2v(D_6^2)/3$ so $\Cc_0$ has potentially good quartic
reduction. Conversely, making use of the invariant $J_9^{(2)} = q_2^2$, we
find $v(q_2) \geq v(D_6)/3$ and then since $v(I_9^{(2)}) \geq v(D_6^2)/3$ and
$v(q_3)=v(D_6)/2$, we get $v(a) \geq v(D_6)/12$.
\end{proof}

Finally we look at the case $p=3$ (and $\Char(K)=0$).
\begin{theorem} \label{ex:picard3} Let $K$ be a discrete valuation field of
  characteristic 0 with valuation $v$, valuation ring $\O$ and a uniformizer
  $\pi$. Let $k=\Oc/\langle \pi \rangle$ be the residue field of
  characteristic $p=3$. %
  Under Conjecture~\ref{conj:hsop3}, the curve
  $\Cc_0 : -x_2^3x_3+G_0(x_1,x_3)=0$ where $G_0=x^4+ b x ^2 z^2+c x z^3+d z^4$
  has potentially good quartic reduction if and only if
  \begin{equation}\label{eq:valuations3}
    6\,v(3^{5/4}\,b) \geq v(D_6(G_0))\ \text{ and }\ %
    2\,v(q_3) \geq v(3^5\,D_6(G_0))\,.
  \end{equation}
\end{theorem}
\begin{proof}
  We use the HSOP
  ${\DO^{(3)}} = \{ I_3^{(3)},\, I_{27}^{(3)},\,\ldots, I_{81}^{(3)}\}$, given
  in Theorem~\ref{th:domod3}. In the case of the model $\Cc_0$ we get in
  particular $I_3^{(3)} = 0$ and $I_{27}^{(3)} = D_{27} = 3^9 D_6^2$\,.  Let
  us define $\nu := v(D_6(Q_0))\,/\,6$\,.

  Let us first assume the conditions~\eqref{eq:valuations3}, \textit{i.e.}
  $q_3=O(3^{{5}/{2}}\pi^{3\nu})$. Since $3^3 \cdot D_6 = 4 q_2^3-q_3^2$, it
  implies that $v(q_2)=v(3\,\pi^{2\nu})$ and
  $D_6 = 4\, (q_2/3)^3 + O( 3^2\pi^{6\nu})$.  Hence, a quick computation shows
  that
  \begin{dgroup*}[style={\footnotesize},spread={-2pt}]
    \begin{dmath*}
      J_{27}^{(3)} = %
      O(3^{14.5}{\pi}^{9\,\nu})\cdot{b}^{3}\,+
      O(3^{13}{\pi}^{10\,\nu})\cdot{b}^{2}\,+
      O(3^{11.5}{\pi}^{11\,\nu})\cdot{b}\,+
      O(3^{9}{\pi}^{12\,\nu})\,,
    \end{dmath*}
    \begin{dmath*}
      I_{36}^{(3)} = %
      O(3^{17}{\pi}^{12\,\nu})\cdot{b}^{4}\,+
      O(3^{17.5}{\pi}^{13\,\nu})\cdot{b}^{3}\,+
      O(3^{16}{\pi}^{14\,\nu})\cdot{b}^{2}\,+
      O(3^{14.5}{\pi}^{15\,\nu})\cdot{b}\,+
      O(3^{13}{\pi}^{16\,\nu})\,,
    \end{dmath*}
    \begin{dmath*}
      J_{36}^{(3)} = %
      O(3^{17}{\pi}^{12\,\nu})\cdot{b}^{4}\,+
      O(3^{17.5}{\pi}^{13\,\nu})\cdot{b}^{3}\,+
      O(3^{16}{\pi}^{14\,\nu})\cdot{b}^{2}\,+
      O(3^{14.5}{\pi}^{15\,\nu})\cdot{b}\,+
      O(3^{13}{\pi}^{16\,\nu})\,,
    \end{dmath*}
    \begin{dmath*}
        I_{45}^{(3)} = %
        O(3^{22.5}{\pi}^{15\,\nu})\cdot{b}^{5}\,+
        O(3^{21}{\pi}^{16\,\nu})\cdot{b}^{4}\,+
        O(3^{20.5}{\pi}^{17\,\nu})\cdot{b}^{3}\,+
        O(3^{20}{\pi}^{18\,\nu})\cdot{b}^{2}\,+
        O(3^{17.5}{\pi}^{19\,\nu})\cdot{b}\,+
        O(3^{16}{\pi}^{20\,\nu})\,,
    \end{dmath*}
    \begin{dmath*}
        I_{81}^{(3)} = %
        O(3^{40.5}{\pi}^{27\,\nu})\cdot{b}^{9}\,+
        O(3^{38}{\pi}^{28\,\nu})\cdot{b}^{8}\,+
        O(3^{37.5}{\pi}^{29\,\nu})\cdot{b}^{7}\,+
        O(3^{36}{\pi}^{30\,\nu})\cdot{b}^{6}\,+
        O(3^{34.5}{\pi}^{31\,\nu})\cdot{b}^{5}\,+
        O(3^{34}{\pi}^{32\,\nu})\cdot{b}^{4}\,+
        O(3^{32.5}{\pi}^{33\,\nu})\cdot{b}^{3}\,+
        O(3^{31}{\pi}^{34\,\nu})\cdot{b}^{2}\,+
        O(3^{29.5}{\pi}^{35\,\nu})\cdot{b}\,+
        O(3^{29}{\pi}^{36\,\nu})\,.
      \end{dmath*}
    \end{dgroup*}
    Since $b=O(3^{-1.25}\,\pi^v)$, we obtain
    $J_{27}^{(3)} = O(3^9\,\pi^{12\nu})$, %
    $I_{36}^{(3)} = O(3^{13}\,\pi^{16\nu})$, %
    $J_{36}^{(3)} = O(3^{12}\,\pi^{16\nu})$, %
    $I_{45}^{(3)} = O(3^{16}\,\pi^{20\nu})$ %
    and %
    $I_{81}^{(3)} = O(3^{28}\,\pi^{36\nu})$\,. %
    By Theorem~\ref{th:GNHR}, $\Cc_0$ has potentially good quartic
    reduction.

    Conversely, let us assume that $\mathcal{C}_0$ has potentially good
    quartic reduction. So $I_{81}^{(3)} =
    O(3^{27}\,\pi^{36\nu})$. Since, $I_{81}^{(3)} \equiv q_2 \bmod 3^4$, we
    have $q_2 = O(3)$. Furthermore
    $I_{81}^{(3)}\equiv 3^{18}\,(D_6-4(q_2/3)^3)^6 \bmod 3^{20}$.  We thus
    must have $D_6 = 4\, (q_2/3)^3 + O( 3^2)$. Since
    $3^3 \cdot D_6 = 4 q_2^3-q_3^2$, it further implies $q_2=O(3\,\pi^{2\nu})$ and
    $q_3=O(3^{{5}/{2}}\pi^{3\nu})$.  By hypothesis, we also must have
    $J_{27}^{(3)} = O(3^{9}\,\pi^{12\nu})$, and this yields
    $b=O(3^{-1.25}\,\pi^v)$ too.
\end{proof}

\begin{remark} \label{rem:Picardmod3}In \cite{BKKSW}, the authors describe the
  arithmetic properties of models of Picard curves when $\Char(K) \ne 3$.
\end{remark}

\section{Characterization of potentially good hyperelliptic reduction}

\subsection{Characterization of \PHM{} in terms of invariants} \label{sec:comphyp}

Let $K$ be a discrete valuation field with valuation $v$, valuation ring $\O$
and a uniformizer $\pi$. We now restrict to $k=\Oc/\langle \pi \rangle$ of
characteristic $p =0 $ or $p>7$.  We start with the following observation.

\begin{lemma} \label{lem:test-conic} Let $Q \in \O[x_1,x_2,x_3]$ be a
  quadratic ternary form. Then the Dixmier-Ohno invariants of $Q^2$ are
  \begin{multline*}
    \DOv(Q^2)= \left(I_3 , \frac{1}{2^2\cdot3^2\cdot5} I_3^2 ,
      \frac{7^2}{2^2\cdot3^2} I_3^3 , \frac{7^2}{2^2\cdot3\cdot5} I_3^3 ,
      \frac{7^3}{2^2\cdot3^4\cdot5} I_3^4 , \frac{7^2}{2^2\cdot3^2} I_3^4 ,
      \frac{5\cdot7^3}{2^4\cdot3^5} I_3^5 , \right.
    \\
    \left. \frac{7^3}{2^4\cdot3^2\cdot5^2} I_3^5 , \frac{7^4}{2^4\cdot3^5}
      I_3^6 , \frac{7^4}{2^4\cdot3^3\cdot5^2} I_3^6 ,
      \frac{7^4}{2^2\cdot3^4\cdot5} I_3^7, \frac{7^4}{2^4\cdot3^2\cdot5} I_3^7
      , 0\right)
  \end{multline*}
  where $I_3 = 5/(2^2\cdot3^2) \cdot D_3(Q)^2$ where $D_3$ is the discriminant of ternary quadratic forms.
\end{lemma}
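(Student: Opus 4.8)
The plan is to reduce the computation of $\DOv(Q^2)$ to a single normalized case and then invoke the $\SL_3$-invariance of the Dixmier--Ohno invariants. The point is that all invariants $I$ in the list $\DOv$ are $\SL_3$-invariant, and each is homogeneous of its stated weight $d_i$ in the $15$ coefficients of the ternary quartic form; hence $I(Q^2)$ is a homogeneous polynomial of degree $d_i/2$ in the $6$ coefficients of the quadratic form $Q$, and it is invariant under $\SL_3$ acting on $Q$. But the ring of $\SL_3$-invariants of ternary quadratic forms is generated by the discriminant $D_3(Q)$ (a degree-$3$ invariant of $Q$), so for each $i$ there is a scalar $c_i$ with $I(Q^2) = c_i\, D_3(Q)^{d_i/3}$; in particular the weight $d_i$ of every Dixmier--Ohno invariant that is not identically zero on $Q^2$ must be divisible by $3$ (which is visibly the case: $3,6,9,\dots$), and $D_{27}$ must vanish on $Q^2$ since $Q^2=0$ is never smooth, forcing the last coordinate to be $0$.

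Next I would pin down the constants. It suffices to evaluate both sides on one convenient form, say the Fermat-type quadric $Q_0 = x_1^2 + x_2^2 + x_3^2$ (or over $\overline{K}$ any nondegenerate $Q$; the generic case then follows by $\SL_3$-equivariance plus homogeneity, since any nondegenerate quadratic form is $\GL_3$-equivalent to $Q_0$ up to scalar and the scalars are tracked by the weights). Computing $\DOv(Q_0^2)$ is a finite explicit calculation with the Dixmier--Ohno formulas; this produces the rational numbers $\frac{1}{2^2\cdot 3^2\cdot 5}$, $\frac{7^2}{2^2\cdot 3^2}$, etc., and simultaneously fixes the normalization $I_3(Q_0^2) = \tfrac{5}{2^2\cdot 3^2}\,D_3(Q_0)^2$. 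Once $I_3(Q^2)=\tfrac{5}{2^2\cdot3^2}D_3(Q)^2$ is known, every other entry is rewritten as the asserted rational multiple of $I_3^{\,m}$ with $m=d_i/3$, because both $I(Q^2)/D_3(Q)^{d_i/3}$ and $c_i\,(I_3(Q^2)/D_3(Q)^2)^{d_i/3}$ are the same constant. (One must check that $I_3(Q^2)\ne 0$ for nondegenerate $Q$ so that the division is legitimate over the generic point; this is exactly the statement $D_3(Q)\ne 0$.)

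The main obstacle is purely computational rather than conceptual: one has to actually carry out the substitution of a generic quadratic form $Q$ into the thirteen Dixmier--Ohno polynomials — or at least into $I_3$ and enough low-degree ones — and verify the displayed constants, which is best delegated to a computer algebra system (and is consistent with the \Magma{} code referenced in the introduction). The only genuine subtlety is the bookkeeping of prime-power denominators: the Dixmier--Ohno invariants are normalized over $\Z[\tfrac{1}{2\cdot 3\cdot 5\cdot 7}]$, so the constants are $2,3,5,7$-unit multiples, and one should confirm the exponents of $7$ (which appear because several Ohno invariants carry intrinsic factors of $7$) come out exactly as stated. Finally, since the identities are polynomial identities with rational coefficients valid over $\Q$, they hold over any $\O$ with $p\ne 2,3,5,7$ after base change, which covers the hypotheses in force in this section.
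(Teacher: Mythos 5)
Your strategy --- observe that each Dixmier--Ohno invariant evaluated at $Q^2$ is an $\SL_3$-invariant of the quadratic form $Q$, hence a constant times a power of the discriminant $D_3(Q)$, and then pin down the constants by one explicit evaluation on a nondegenerate representative --- is the right one, and it is essentially all the paper does: the lemma is stated without proof as the output of a direct computer-algebra computation, and your invariant-theoretic framing is exactly what explains why the answer must have the displayed shape. One bookkeeping error needs fixing, though: the coefficients of $Q^2$ are \emph{quadratic} in the six coefficients of $Q$, so $I(Q^2)$ is homogeneous of degree $2d_i$ (not $d_i/2$) in those coefficients, and therefore $I(Q^2)=c_i\,D_3(Q)^{2d_i/3}$ rather than $c_i\,D_3(Q)^{d_i/3}$. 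As you wrote it, $I_3(Q^2)$ would be proportional to $D_3(Q)$, which is a degree mismatch and contradicts the normalization $I_3(Q^2)=\tfrac{5}{2^2\cdot 3^2}\,D_3(Q)^2$ that you correctly quote two lines later. With the corrected exponent the argument goes through unchanged: one still gets $3\mid d_i$ (since $\gcd(2,3)=1$), and dividing by $I_3(Q^2)^{d_i/3}\propto D_3(Q)^{2d_i/3}$ gives the asserted constants. Everything else --- the vanishing of $D_{27}(Q^2)$ because $Q^2=0$ is never smooth, the reduction to a single nondegenerate form via $\GL_3$-equivalence with scalars tracked by the weights, and delegating the thirteen constants to a computation over $\Z[\tfrac{1}{2\cdot3\cdot5\cdot7}]$ --- is sound and matches the paper's (implicit) treatment.
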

In particular, if $Q$ is non-degenerate, we have
\begin{small}
  \begin{equation}\label{eq:1}
    \DO(Q^2) = \left(1: \frac{1}{180}: \frac{49}{36}: \frac{49}{60}: \frac{343}{1620}: \frac{49}{36}: \frac{1715}{3888}:
      \frac{343}{3600}: \frac{2401}{3888}: \frac{2401}{10800}: \frac{343}{1620}:
      \frac{2401}{720}:  0\right).
  \end{equation}
\end{small}
\begin{remark} \label{rem:kollar} With Lemma~\ref{lem:test-conic}, we are now
  in position to determine the reduction type \emph{over $K$} of a smooth
  plane quartic $F=0$ (at least for any DVR $\O$ for which one can solve
  polynomial equations over $k$). The extra ingredients needed come from the
  methods developed in \cite{kollar} and forthcoming
  \cite{elsenhans-stoll}. Indeed, their algorithms compute the finite list $S$
  of $\PGL_3(\O)$-equivalent classes of forms over $\O$ which are
  $\PGL_3(K)$-equivalent to $F$ and with a discriminant of minimal
  valuation. Now, the discussion splits into two cases. Clearly, one has good
  quartic reduction if and only if there is a form in $S$ with a discriminant
  of valuation $0$ (and in this case one has actually $\# S=1$). \\Secondly,
  one has good hyperelliptic reduction if and only if at least one form
  $F_0 \in S$ satisfies the conditions of Proposition~\ref{2->1}. The reverse
  implication is straightforward. For the direct implication, we know that the
  curve admits a good toogle model $F_0=0$ over $K$
  (Theorem~\ref{th:goodmodel}).  Its discriminant is not of valuation $0$ but
  $I_3(F_0)$ is, as we have just seen\footnote{With the notation of
    Theorem~\ref{th:domodp}, the same property holds replacing $I_3$ by
    $I_{24}^{(2)}$ (resp. $I_3^{(3)}$, resp. $J_{15}^{(5)}$) in characteristic
    $2$ (resp. $3$, resp. $5$).}. This means that the valuation of the
  discriminant of $F_0$ is minimal and therefore $F_0$ belongs to the class of
  a form in $S$, which is also a good toogle model (the properties of being a
  good toogle model do not change by the action of $\PGL_3(\O)$). Unlike the
  case of good quartic reduction, $S$ may contain several forms having either
  bad or good hyperelliptic reduction. For instance $F/\Q_{\pi}=0$ with
  $\pi \in \Z_{>5}$ where $F=(x_2^2-4 x_1 x_3)^2 + \pi^4 (x_1^4+x_3^4)$ has
  good hyperelliptic reduction whereas $F(\pi^2 x_1,\pi x_2,x_3)/\pi^4=0$ has
  bad reduction and both have a discriminant with minimal valuation.
\end{remark}

\begin{proposition} \label{prop:MHR} The curve $C$ potentially admits a \PHM{}
  if and only if any minimal representative of $\DO(F)$ reduces modulo $\pi$
  to the invariants in equation~\eqref{eq:1}.
\end{proposition}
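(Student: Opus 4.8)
The plan is to prove both implications by connecting the existence of a \PHM{} to the behavior of the point $\DO(F)$ under reduction, using GIT over $\O$ as developed in Section~\ref{sec:appendix} together with the classification of strictly semi-stable ternary quartics with minimal orbit from \cite{artebani}.

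First I would treat the necessity (``only if''). Suppose $C$ potentially admits a \PHM{}; after extending $K$ we may write $F = Q^2 + \pi^s G$ with $\bar Q$ non-degenerate, $s>0$ even, $G$ primitive. The key observation is that the Dixmier-Ohno invariants are polynomials in the coefficients of $F$, so $\DOv(F) = \DOv(Q^2) + (\text{terms divisible by }\pi^s)$; more precisely, using the weighted-homogeneity and the fact that each $I_d$ has degree $d$, one sees that $I_d(F) \equiv I_d(Q^2) \pmod{\pi^s}$ (a single factor of $\pi^s G$ already lowers the $\bar Q^2$-content, and higher powers only more so). By Lemma~\ref{lem:test-conic}, $\DOv(Q^2)$ is the explicit vector with $I_3$-content shown there, and since $\bar Q$ is non-degenerate, $I_3(Q^2)$ is a unit times a square, so in particular $v(I_3(F))=0$. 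Hence $\DOv(F)$ is already a minimal representative of $\DO(F)$ up to a unit, and it reduces mod $\pi$ to $\DO(\bar Q^2)$, which by equation~\eqref{eq:1} is exactly the vector in~\eqref{eq:1}. Since $\DO(\bar Q^2)$ does not depend on the non-degenerate $\bar Q$ chosen (all non-degenerate conics are $\mathrm{GL}_3$-equivalent), this proves the forward direction.

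For the converse, suppose a minimal representative $\DOv(F)^{\min}$ reduces modulo $\pi$ to the vector~\eqref{eq:1}; in particular $v_{\DO}(I_3(F))=0$ and $v_{\DO}(I_{27}(F))>0$, so $\bar C$ is \emph{not} a smooth quartic but $C$ is not totally degenerate on the invariant level. After a finite extension I would invoke Proposition~\ref{prop:ssred} (applicable since $\DOv$ contains a HSOP over $\O$ by Theorem~\ref{th:domodp} in the range $p=0$ or $p>7$, and since $C$ is stable as a smooth quartic) to find a minimal form $F_0$ over $\O$ with $F_0$ $\mathrm{GL}_3(K)$-equivalent to $F$ and $\DOv(F_0)=\DOv(F)^{\min}$. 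Then $\bar F_0$ is a semi-stable (non-smooth, since $D_{27}(\bar F_0)=0$) ternary quartic whose orbit is the minimal orbit in its fibre of $\Xb^{\ssim}\dq G$, and whose Dixmier-Ohno point equals $\DO(\bar Q_0^2)$. Here is where \cite{artebani} enters: the strictly semi-stable ternary quartics with closed (minimal) orbit are classified, and among them the only one with $\DO$-point equal to~\eqref{eq:1} — the point coming from a double non-degenerate conic — is the orbit of $\bar Q_0^2$ itself. Therefore $\bar F_0$ lies in the $\mathrm{GL}_3(\bar k)$-orbit of $\bar Q_0^2 = q^2$ for a non-degenerate conic $q$, i.e.\ $\bar F_0 = q^2$ up to a linear change and a unit scalar. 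Writing $F_0 = Q^2 + \pi R$ with $\bar Q = q$ and then running the "completing the square'' argument exactly as in the proof of Theorem~\ref{th:goodmodel-f}/\ref{th:is2} (Step 1--Step 2 there), one improves this to $F_0 = Q^2 + \pi^s G$ with $s>0$ even; choosing $Q,G$ primitive and noting $\bar Q$ is non-degenerate gives a \PHM{} for $C$ after the extension, as required.

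The main obstacle is the converse direction, specifically pinning down that the only strictly semi-stable minimal orbit realizing the invariant vector~\eqref{eq:1} is the double-conic orbit: this requires carefully reading off Artebani--Sarti's list of semi-stable quartics with closed orbit and checking their Dixmier-Ohno invariants, ruling out that some other degenerate configuration (e.g.\ a union of conics, or a conic-plus-double-line type degeneration) could share the same $\DO$-point. A secondary technical point is ensuring the ``completing the square'' step can always be iterated to reach an \emph{even} exponent $s$ with $\bar G$ prime to $q$; but this is precisely the content already extracted in the proofs of Theorems~\ref{th:goodmodel-f} and~\ref{th:is2}, so it can be cited rather than redone. The forward direction is essentially a bookkeeping computation with the explicit formula of Lemma~\ref{lem:test-conic} and presents no real difficulty.
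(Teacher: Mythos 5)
Your forward direction is correct and is essentially what the paper dispenses with as ``the direct implication is clear''. The converse, however, has a genuine gap at precisely the step you single out as the main obstacle, and that obstacle is not removable by ``carefully reading off'' the classification: it is simply not true that the double conic is the only strictly semi-stable configuration realizing the Dixmier--Ohno point \eqref{eq:1}. After reducing (via Proposition~\ref{prop:ssred}, and arranging $\bar F_0$ to be semi-stable with minimal orbit) one applies \cite[Lem.1.4.iii]{artebani} and is left with two possibilities: $\bar F_0\sim Q_0^2$, or $\bar F_0\sim Q_0\cdot Q$ with $Q$ a conic tangent to $Q_0$. The condition $\DO(\bar F_0)=\DO(Q_0^2)$ does \emph{not} eliminate the second case: the computation in the paper shows it only forces $c\ne 0$ and $c=-4d$, i.e.\ $\bar F_0=Q_0^2+Q_0x_1(ax_1+bx_2)$ up to a constant. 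All of these quartics carry the point \eqref{eq:1}, because the one-parameter subgroup $\mathrm{diag}(t,1,t^{-1})$ degenerates them to $Q_0^2$; hence $Q_0^2$ lies in their orbit closure and the invariants cannot distinguish them from $Q_0^2$, even though they are not in its orbit. Your appeal to ``uniqueness of the minimal orbit with a given $\DO$-point'' would require both that $\bar F_0$ provably lie in a closed orbit and that the thirteen Dixmier--Ohno invariants separate closed orbits over $\bar k$; neither is supplied by the results you invoke. The paper instead confronts the tangent-conic case directly and kills the cross term $Q_0x_1(ax_1+bx_2)$ by the ramified substitution $(x_1,x_2,x_3)\mapsto(\pi^{1/4}x_1,\pi^{1/8}x_2,x_3)$, which after rescaling produces a \PHM{} $Q_0^2+\pi^{1/8}G_0$ over an extension. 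This additional degeneration argument is the missing content of your proof.

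A smaller point: once $\bar F_0=q^2$ is achieved you neither need nor may use Steps 1--2 of Theorems~\ref{th:goodmodel-f} and \ref{th:is2}; those rely on the existence of the smooth model $\mathcal{C}$, i.e.\ on good hyperelliptic reduction, which is exactly what is not assumed in Proposition~\ref{prop:MHR}. Nothing of the sort is required: write $F_0=Q^2+\pi^r G$ with $\bar Q=q$ and $r$ maximal (so that $G$ is primitive), and pass to a ramified extension of $K$ to make the exponent even; Definition~\ref{def:HGRM} is then met directly.
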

\begin{proof}
  The direct implication is clear.  For the converse, using
  Proposition~\ref{prop:ssred}, and after a possible extension, we can assume
  that we start with a model $C :F=0$ with $F \in \O[x_1,x_2,x_3]$ and that
  $\bar{F}=0$ is a GIT-semi-stable quartic with minimal orbit.  Since
  $\bar{F}=0$ has the same invariants as the square of a non-degenerate
  quadric, which has an infinite automorphism group, it is not GIT-stable.
  The quartic $\bar{F}=0$ is therefore in $X^{ss} \setminus X^s$.  By
  \cite[p.49]{mum-ens} and \cite[Lem.1.4.iii]{artebani}, after possibly a
  finite extension of $K$, $\bar{F}$ is either equivalent to the square of the
  non-degenerate quadratic form $Q_0=(x_2^2-4x_1 x_3)$ or to the product of
  $Q_0$ with a quadratic form $Q$ such that $Q_0=0$ is tangent to $Q=0$ (the
  tangent line being, say, $x_1=0$). In the first case, this means exactly
  that $F$ is a \PHM{}. In the second case, we have
    $$\bar{F}=(x_2^2- 4 x_1 x_3) (a x_1^2+b x_1 x_2+c x_1 x_3+d x_2^2) \text{
    with } (a,b,c,d) \in k^4.$$
  Moreover, since  $\DO(\bar{F})=\DO(Q_0^2)$,
  a computation shows that $c \ne 0$ and $c=-4d$. Hence we
  can rewrite (up to a multiplicative constant)
  $\bar{F}=Q_0^2 + Q_0 x_1 (ax_1+bx_2)$. We therefore see that
  $$F=Q_0^2 + Q_0 x_1 (ax_1+bx_2) + \pi G$$ with $G \in \Oc[x_1,x_2,x_3]$.
  Finally the change of
    variables
    $(x_1,\,x_2,\,x_3) \mapsto (\pi^{1/4}\,x_1\,,\,\pi^{1/8}\,x_2\,,\,
    x_3),$
transforms $F$ into
  \begin{displaymath}
    \pi^{1/2} \cdot \left(Q_0^2 + \pi^{1/8} \underbrace{\left(Q_0 x_1 (a \pi^{1/8} x_1+b x_2) + \pi^{3/8} G(\pi^{1/4} x_1,\pi^{1/8} x_2,x_3)\right)}_{G_0}\right).
  \end{displaymath}
Hence, over an extension of $K$, we find the required model
$F_0= Q_0^2 + \pi^{1/8} G_0$.
\end{proof}
Since having potentially good hyperelliptic reduction implies the existence of
a \PHM{}, Proposition~\ref{prop:MHR} is often strong enough to rule out this
scenario, as the following example shows.
\begin{example} \label{ex:X1} Let $K=\Q_{11}$ and consider the case
  $X_{12}/K : F=0$ from \cite[Sec.5]{KLLRSS17}. The valuations of $\DOv(F)$
  are $(0, 0, 0, 1, 0, 0, 2, 0, 1, 0, 0, 0, 9 )$ and hence $\DOv(F)$ is a
  minimal representative. Since $v_{\DO}(J_9(F))=v(J_9(F))>0$, the list
  $\DOv(F)$ cannot reduce modulo $\pi$ to equation~\eqref{eq:1}. Therefore
  $X_{12}$ does not have potentially good reduction.

  For the excluded characteristics $p =5$ or $7$ (and conjecturally $3$), one
  could replace the invariants in $\DOv$ by the HSOPs of
  Theorems~\ref{th:domodp} and \ref{th:domod3} and still get a
  characterization of having potentially a toggle model. For $p=2$, we get a
  necessary condition for the curve to have potentially good hyperelliptic
  reduction. Let us illustrate this by resuming the case of $X_1$ from
  Example~\ref{ex:cm}. We see for instance that the valuations at $5$ of the
  HSOP for $(x_2^2-4 x_1x_3)^2$ are $(1, 1, 1, 3, 0, 6, \infty)$ whereas the
  ones of $X_1$ are $(1, 1, 0, 2, 1, 2, 12)$. We can conclude that $X_1$ has
  not potentially good reduction at $5$ and also at $7$ with the same
  arguments. However at $2$, the reduction of the HSOP of $X_1$ is the same as
  the ones of $(x_2^2-x_1 x_3)^2$ and we cannot conclude on wether $X_1$ has
  potentially good hyperelliptic reduction at $2$ or does not have potentially
  good reduction. Notice that we slightly change the conic here so it remains
  non-degenerate modulo $2$ and apply Theorem~\ref{th:is2}. We shall see how
  to deal with the prime $2$ in Example~\ref{ex:final}.
\end{example}

\subsection{The equivariant map $b_8$ and proof of Theorem~\ref{th:main}}
\label{sec:char-hyper-reduct}\label{sec:dixm-ohno-invar1}

The equivalence in Theorem~\ref{th:main} is more complicated to establish than
the one in Theorem~\ref{th:GNHR} as we must link the world of genus 3
non-hyperelliptic curves (as ternary quartic forms) and hyperelliptic curves
(as binary octics). The starting point is a beautiful result from
representation theory we will recall now.

Let $R$ be an integral domain and let $W$ (resp. $V$) be free $R$-modules of
rank $n=2$ (resp. $n=3$). We identify binary (resp. ternary) forms with
coefficients in $R$ of degree $d \geq 0$ with elements in $\Sym^d(W^{\vee})$
(resp. $\Sym^d(V^{\vee})$). The usual action of $\GL_n(R)$ on the left for $W$
(resp. $V$) induces a left action on $\Sym^d(W)$ (resp. $\Sym^d(V)$) and a
right action on the forms, which we denote in both cases with a dot.  An
identification of $V$ with $\Sym^2(W)$ defines a morphism
$h : \SL(W) \to \SL(V)$ and a linear morphism
$b_{2d} : \Sym^d(V^{\vee}) \to \Sym^{2d}(W^{\vee})$ which is equivariant for
the previous action, \ie~$F.h(T) = b_{2d}(F).T$ for any $T \in \SL(W)$. The
identification of $V$ with $\Sym^2(W)$ also induces an embedding of $\P^1$
into $\P^2$ whose image is given by a conic $Q_0=0$ with $Q_0$ a quadratic
form with coefficients in $R$. The image of the morphism $h$ is contained in
the subgroup $\SO(Q_0)$ of elements $T \in \SL_3(R)$ such that $Q_0.T=Q_0$.

To make it concrete and keeping with the literature, when the characteristic
of $R$ is not equal to $2$, we choose the following particular basis for
$\Sym^2(W)$.  Let $u,v$ (resp. $v_1,v_2,v_3$) be a basis of $W$ (resp. $V$)
and $x,z$ the dual basis of $W^{\vee}$ (resp. $x_1,x_2,x_3$ the dual basis of
$V^{\vee}$). We let $u^2,2uv,v^2$ be a basis for $\Sym^2(W)$. Then
$b_{2d}(F)=F(x^2,2xz,z^2)$, $Q_0=x_2^2-4 x_1 x_3$ and
\begin{equation}\label{eq:GL2ToGL3}
  h
  \begin{pmatrix}
    a & b \\
    c & d
  \end{pmatrix}
  =
  \begin{pmatrix}
    a^2 &   2 a b   & b^2 \\
    a c & a d + b c & b d \\
    c^2 &   2 c d   & d^2
  \end{pmatrix}.
\end{equation}

Particularizing to the case of a quartic form $F$, a simple computation shows
that $b_8(F)$ contains valuable information on the reduction.
\begin{lemma} \label{lem:fond} Let $\Cc/\O : F=0$ with $F=Q_0^2 + \pi^{2s} G$
  be a \PHM. The model $\Cc$ is a good \PHM{} if and only if
  $\overline{b_8(G)}=\overline{G}(x^2,2xz,z^2)$ has 8 distinct roots. Moreover
  an equation of $\bar{\Cc}$ is $y^2=\overline{b_8(G)}$.
\end{lemma}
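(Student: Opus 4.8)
The plan is to unwind the definitions attached to a \PHM{} and use the structural results already established, in particular Theorem~\ref{th:goodmodel-f} and Proposition~\ref{2->1}. First I would recall that, by Definition~\ref{def:HGRM}, the data $F = Q_0^2 + \pi^s G$ with $s>0$ even, $G$ and $Q_0$ primitive, and $\bar{Q}_0$ irreducible, is exactly a \PHM{} of $C$; and that it is a \emph{good} \PHM{} precisely when $\bar{Q}_0 = 0$ meets $\bar{G} = 0$ transversely in $8$ distinct $\bar{k}$-points. So the statement to prove reduces to the equivalence ``transversal $8$-point intersection of $\bar{Q}_0=0$ and $\bar{G}=0$'' $\iff$ ``$\overline{b_8(G)} = \overline{G}(x^2, 2xz, z^2)$ has $8$ distinct roots'', together with the identification of the special fiber.

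For the first equivalence, the key observation is that the parametrization $\P^1 \to \P^2$, $(x:z) \mapsto (x^2 : 2xz : z^2)$, is exactly the isomorphism of $\P^1$ onto the smooth conic $\{Q_0 = 0\}$ coming from the identification $V \simeq \Sym^2(W)$ (valid since $\caract(k) \ne 2$, which holds under the running hypothesis $p = 0$ or $p > 7$). Pulling back the quartic $\bar{G} = 0$ along this parametrization gives the degree-$8$ divisor on $\P^1$ cut out by $\overline{b_8(G)}$, and the scheme-theoretic intersection $\{\bar{Q}_0 = 0\} \cap \{\bar{G} = 0\}$ corresponds, under the isomorphism $\P^1 \xrightarrow{\sim} \{\bar{Q}_0 = 0\}$, to the zero scheme of $\overline{b_8(G)}$ on $\P^1$. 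Hence the intersection consists of $8$ distinct $\bar{k}$-points, each of multiplicity $1$ (transversality), if and only if the binary octic $\overline{b_8(G)}$ has $8$ distinct roots, i.e.\ nonzero discriminant. One small point to check here is that $\overline{b_8(G)} \ne 0$, equivalently that $\bar{Q}_0 = 0$ is not a component of $\bar{G} = 0$: but $\bar{Q}_0$ is irreducible of degree $2$, so if it divided $\bar{G}$ then $\bar{Q}_0^2$ would divide $\bar{F} = \bar{Q}_0^2 + 0$ up to things killed by $\pi$ — more carefully, in the good \PHM{} case this is excluded precisely by the transversality hypothesis, and in the reverse direction nonvanishing of the discriminant of $\overline{b_8(G)}$ forces $\overline{b_8(G)} \ne 0$, hence $\bar{Q}_0 \nmid \bar{G}$, so the parametrization pullback is a genuine degree-$8$ divisor. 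This compatibility of intersection multiplicities with pullback along an isomorphism is the routine geometric content, and I expect it to be the main (though not severe) obstacle, essentially a bookkeeping exercise with the conic parametrization.

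For the equation of the special fiber, when the \PHM{} is good, Theorem~\ref{th:goodmodel-f} (or directly Lemma~\ref{normality-proj}(c) together with Lemma~\ref{lm:reduced}) identifies $\Cc$ with $\Proj\left(\O[x_1,x_2,x_3,y]/(Q_0 - \pi^{s/2} y', y'^2 + G)\right)$ after the substitution relating $F = Q_0^2 + \pi^s G$ to the displayed system; its special fiber is $\Proj\left(k[x_1,x_2,x_3,y']/(\bar{Q}_0, y'^2 + \bar{G})\right)$, the double cover of the conic $\{\bar{Q}_0 = 0\}$ branched along $\{\bar{Q}_0 = \bar{G} = 0\}$. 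Composing with the parametrization $\P^1 \xrightarrow{\sim} \{\bar{Q}_0 = 0\}$, $(x:z)\mapsto(x^2:2xz:z^2)$, transports this double cover to the curve $y^2 = \bar{G}(x^2, 2xz, z^2) = \overline{b_8(G)}$ over $\P^1_{\bar{k}}$; since by the first part $\overline{b_8(G)}$ has $8$ distinct roots, this is a smooth hyperelliptic curve of genus $3$, as it must be. This gives $\bar{\Cc} : y^2 = \overline{b_8(G)}$ and completes the proof.
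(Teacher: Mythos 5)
Your argument is correct and is exactly the ``simple computation'' the paper omits: the degree-$2$ Veronese parametrization $(x:z)\mapsto(x^2:2xz:z^2)$ identifies the intersection divisor of $\bar Q_0=0$ with $\bar G=0$ with the zero divisor of $\overline{b_8(G)}$ on $\P^1$, and Proposition~\ref{2->1} (equivalently Lemmas~\ref{normality-proj} and \ref{lm:reduced}) identifies the special fiber as the corresponding double cover. The only cosmetic point is the sign ($y^2+\bar G=0$ versus $y^2=\overline{b_8(G)}$), which is harmless since the paper only claims an equation of $\bar{\Cc}$ over $\bar k$ and $p\ne 2$.
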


Let $F = Q_0^2 + \pi^{2s} G$ be a \PHM{} for a smooth plane quartic $C$.  The
article \cite{LRS16} gives relations between the Shioda invariants of $b_8(F)$
and the Dixmier-Ohno invariants of $F$. These expressions can be shorten
considerably in our situation by looking at the first terms of their
$\pi$-expansions.
\begin{proposition} \label{prop:shortrel} Let $F = Q_0^2+ \pi^{2s} G$ be a
  \PHM. Let $f=b_8(G)=b_8(F)/\pi^{2s}$.  The Shioda invariants $j_i$ for
  $2 \leq i \leq 7$ satisfy the congruences
  $\pi^{2s i} \cdot j_i(f) = \iota_{3 i}(F) + O(\pi^{2s(i +1)})$ where
  \begin{equation}\label{eq:DOshort}
    \parbox{0.9\textwidth}{
      \begin{dgroup*}[style={\footnotesize}]
        \begin{dmath*}
          \iota_6 = \frac{3^2}{2^5 \cdot 5 \cdot 7}\,%
          ({I_{3}}^{2}-180\,I_{6})\,,
        \end{dmath*}
        \begin{dmath*}
          \iota_9 = \frac{3^5}{2^9 \cdot 5^2 \cdot 7^{3}}\,%
          (14\,{I_{3}}^{3}-2520\,I_{3}\,I_{6}-81\,I_{9}+135\,J_{9})\,,
        \end{dmath*}
        \begin{dmath*}
          \iota_{12} = \frac{ 3^3 }{ 2^{14} \cdot 5 \cdot 7^{3}}\,%
          I_3\,(-32\,{I_{3}}^{3}+14580\,I_{3}\,I_{6}+261\,I_{9}-495\,J_{9})\,%
          + \frac{5^2}{2 \cdot 3 \cdot 7^{2}}\ \iota_6^2\,,
        \end{dmath*}
        \begin{dmath*}
          \iota_{15} = \frac{ 3^4 }{ 2^{16} \cdot 5^2 \cdot 7^{5}}\,%
          I_3\,(-592\,{I_{3}}^{4}+30780\,{I_{3}}^{2}I_{6}+2601\,I_{3}\,I_{9}-45\,I_{3}\,J_{9}+7290000\,{I_{6}}^{2}-2430\,J_{12})\,%
          + \frac{5^2}{3^2 \cdot 7}\ \iota_6\,\iota_9\,,
        \end{dmath*}
        \begin{dmath*}
          \iota_{18} = \frac{ 3^8 }{ 2^{24} \cdot 5^2 \cdot 7^4 }\,%
          I_3^2\,(-8\,{I_{3}}^{4}-14418\,{I_{3}}^{2}I_{6}-117\,I_{3}\,I_{9}+423\,I_{3}\,J_{9}+155520\,{I_{6}}^{2}-486\,I_{12})\,%
          + \frac{17^3}{2^6 \cdot 3^2 \cdot 7^3}\,{\iota_{6}}^{3}%
          + \frac{3\cdot 5}{2^5}\,\iota_{9}^{2}\,%
          - \frac{17}{2^3 \cdot 7}\,\iota_{6}\,\iota_{12}\,,%
        \end{dmath*}
        \begin{dmath*}
          \iota_{21} = \frac{ 3^7 }{ 2^{44} \cdot 5^7 \cdot 7^{5}}\,%
          I_3^{2}\,(-128\,{I_{3}}^{5}+213912\,{I_{3}}^{3}I_{6}+2961\,{I_{3}}^{2}I_{9}-8541\,{I_{3}}^{2}J_{9}-18057600\,I_{3}\,{I_{6}}^{2}+12204\,I_{3}\,I_{12}+810\,I_{3}\,J_{12}-45360\,I_{6}\,I_{9}+285120\,I_{6}\,J_{9}-4860\,I_{15}-540\,J_{15})\,%
          +{\frac {2\cdot 5^3}{3^3 \cdot 7^2}}\,\iota_{6}^{2}\iota_{9}-{\frac {13}{2\cdot 3^2}}\,\iota_{9}\,\iota_{12}-{\frac{17}{2^2\cdot 3 \cdot 7}}\,\iota_{6}\,\iota_{15}\,.
        \end{dmath*}
      \end{dgroup*}
    }
  \end{equation}

  Finally $\pi^{14\cdot 2s} \,D_{14}(f) = \iota_{42}(F) + O(\pi^{15 \cdot 2s})$ with
\begin{equation} \label{eq:trompe}
  \iota_{42} = \frac{3^{10}}{2^{18
    } \cdot 5^5}\, I_3^5 \ I_{27}\,.
\end{equation}
\end{proposition}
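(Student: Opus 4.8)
The plan is to start from the exact relations of \cite{LRS16} that express the Shioda invariants $j_i(b_8(F))$ and the discriminant $D_{14}(b_8(F))$ as polynomials in the Dixmier--Ohno invariants of $F$ (for a \PHM{} one may, after the $\rho$-rigidification of the introduction, assume $\rho(F)=\rho(Q_0^2)$; this changes $F$ only by a term of valuation $\geq s$, so it does not affect the congruences to be proved), and then to plug in the special shape $F=Q_0^2+\pi^s G$ and follow $\pi$-adic orders. The key structural input is that $b_8(Q_0^2)=\bigl(Q_0(x^2,2xz,z^2)\bigr)^2=0$, since $Q_0$ pulls back to $0$ on the conic $Q_0=0$; by linearity of $b_8$ this gives $b_8(F)=\pi^s b_8(G)=\pi^s f$, and since $j_i$ is homogeneous of degree $i$ and $D_{14}$ of degree $14$ in the coefficients of a binary octic we get \emph{exactly} $j_i(b_8(F))=\pi^{si}j_i(f)$ and $D_{14}(b_8(F))=\pi^{14s}D_{14}(f)$.

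Next I would substitute $F=Q_0^2+\pi^s G$ into the \cite{LRS16} expressions and expand in powers of $t:=\pi^s$. The $t^0$-term of every Dixmier--Ohno invariant is its value at $Q_0^2$, written down explicitly in Lemma~\ref{lem:test-conic}; the precise numerical coefficients there are exactly those for which the polynomial combinations occurring in \eqref{eq:DOshort} --- for example $I_3^2-180\,I_6$ for $\iota_6$, or $14\,I_3^3-2520\,I_3 I_6-81\,I_9+135\,J_9$ for $\iota_9$ --- vanish at $Q_0^2$, and for which $I_3^5\, I_{27}$ vanishes at $Q_0^2$ (here one uses $I_{27}(Q_0^2)=0$). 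Comparing the two sides of the \cite{LRS16} identity along this family, the right-hand side $\pi^{si}j_i(f)$ is divisible by $\pi^{si}$, so the coefficients of $t^0,\dots,t^{i-1}$ of the left-hand polynomial in $\DOv(F)$ must all cancel and its $t^i$-coefficient is $j_i(f)$; similarly the discriminant side starts at $t^{14}$ with coefficient $D_{14}(f)$. The short invariants $\iota_{3i}$ and $\iota_{42}$ are then precisely the truncations of the \cite{LRS16} polynomials retaining these leading contributions: the ``correction'' monomials $\tfrac{5^2}{2\cdot3\cdot7^2}\,\iota_6^2$ in $\iota_{12}$, $\tfrac{5^2}{3^2\cdot7}\,\iota_6\iota_9$ in $\iota_{15}$, and the $\iota_6^3,\iota_9^2,\iota_6\iota_{12},\iota_6^2\iota_9,\iota_9\iota_{12},\iota_6\iota_{15}$ terms in $\iota_{18}$ and $\iota_{21}$, are there exactly to cancel contributions of products of lower Shioda invariants $j_k(f)$ (since $\iota_{3k}(F)=\pi^{sk}j_k(f)+O(\pi^{s(k+1)})$ by induction), isolating the pure $j_i(f)$.

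Conceptually, the existence of such short expressions can be seen through the $\SO(Q_0)=h(\SL_2)$-module decomposition $\Sym^4(V^{\vee})\cong V_8\oplus V_4\oplus V_0$ (valid since $p=0$ or $p>7$), in which $V_4\oplus V_0$ is the subspace $Q_0\cdot\Sym^2(V^{\vee})$ annihilated by $b_8$, while $b_8$ restricts to an isomorphism $V_8\xrightarrow{\ \sim\ }\Sym^8(W^{\vee})$. An $\SO(Q_0)$-invariant polynomial function that vanishes on the locus of squares of conics (whose tangent space at $Q_0^2$ is all of $V_4\oplus V_0$) can therefore contribute to $j_i(b_8(F))$ only through the $V_8$-component of $G$, that is through $f=b_8(G)$, and by homogeneity this contribution is a linear combination of $j_i(f)$ and products of lower $j_k(f)$. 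I would use this both as a guide and to shortcut some of the cancellation bookkeeping --- for instance to see a priori that the $t^1$-coefficient of $\iota_6(Q_0^2+\pi^s G)$, being an $\SO(Q_0)$-invariant linear functional vanishing on the square locus, is identically $0$, so that indeed $\iota_6(F)=O(\pi^{2s})$.

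What remains, and where the bulk of the work lies, is a finite symbolic verification: for each $i\in\{2,\dots,7\}$ and for $D_{14}$, one must check that the numerical constants displayed in \eqref{eq:DOshort} and \eqref{eq:trompe} are correct, which amounts to carrying out the $\pi$-expansion of the \cite{LRS16} relations (or, equivalently, computing $\DOv(Q_0^2+\pi^s G)$ and $j_i(b_8(G))$, $D_{14}(b_8(G))$ directly from the explicit formula for $b_8$ and the definitions of the invariants, with indeterminate coefficients of $G$) and matching. The main obstacle is one of size rather than of principle: the lower-order cancellations genuinely rely on the full precision of Lemma~\ref{lem:test-conic}, and the intermediate expressions in the coefficients of $G$ grow rapidly, so this step is best run through a computer algebra system, with the structural discussion above guaranteeing that nothing unexpected can happen.
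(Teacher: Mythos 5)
Your proposal is correct and follows essentially the same route as the paper (which states the proposition without a written proof, the surrounding text describing exactly this method: substitute $F=Q_0^2+\pi^s G$ into the relations of \cite{LRS16} between the Shioda invariants of $b_8(F)$ and $\DOv(F)$, use $b_8(Q_0^2)=0$ and homogeneity to get $j_i(b_8(F))=\pi^{si}j_i(f)$ and $D_{14}(b_8(F))=\pi^{14s}D_{14}(f)$, and read off the leading $\pi$-coefficients by a computer-algebra verification). Your supplementary observation via the $h(\SL_2)$-module decomposition $\Sym^4(V^{\vee})\cong V_8\oplus V_4\oplus V_0$, explaining a priori why the low-order coefficients cancel, is sound and a nice addition, but not a departure from the paper's method.
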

\medskip

The previous proposition defines a list of $6$ invariants for ternary quartics
$\ivv=(\iota_6, \ldots, \iota_{21})$ whose evaluation at a generic ternary
quartic form $F$ induces a point in $\P^{(6,9,12,15,18,21)}.$

We can now prove the direct implication of Theorem~\ref{th:main}.

\begin{theorem}[see Theorem~\ref{th:main}]
  Let $K$ be a discrete valuation field with valuation $v$, valuation ring
  $\O$ and a uniformizer $\pi$. Assume that its residue field $k$ has
  characteristic $p \ne 2,3,5,7$.  Let $C/K$ be a smooth plane quartic defined
  by $F=0$. Then $C$ has potentially good hyperelliptic reduction if and only
  if
  $$v_{\DO}(I_3(F))=0, \quad v_{\DO}(I_{27}(F))>0 \; \textrm{and} \;
  v_{\iv}(\,I_3(F)^5\, I_{27}(F)\,)=0.$$
  Under these conditions, one can also
  obtain an explicit equation for the special fiber.
\end{theorem}
\begin{proof}[Direct implication of Theorem~\ref{th:main}]
  Let $F = Q_0^2 + \pi^{2s} G$ be a good \PHM~corresponding to a stable model
  $\Cc/\O$ as in Proposition~\ref{2->1}. One has that
  $v_{\DO}(I_3(F)) = v(I_3(F))=0$ since $I_3(\bar{F})=320/9$. So
  $v_{\DO}(I_{27}(F))=v(I_{27}(F))>0$ since $\bar{F}=0$ is singular.
  Moreover, the reduction of $f=b_8(G)$ gives an equation for
  $\Cc_k : y^2=\bar{f}(x,z)$. This implies that $v(D_{14}(f))=0$ and therefore
  we get the equality
  $v(\iota_{42}(F)) = 14\cdot 2s + v(D_{14}(f)) = 14\cdot 2s.$ Since each of
  the $\iota_{3i}(F)$ is divisible by $\pi^{2si}$, we get that
  $v_{\iv}(\iota_{42}(F))=v_{\iv}(I_3(F)^5 I_{27}(F)) = 0$.\\
\end{proof}

The proof of the converse implication will keep us busy till the end of this
section. For a ternary quartic form $F \in \Sym^4(V^{\vee})$, let us denote
$\rho : \Sym^4(V^{\vee}) \to \Sym^2(V)$ the contravariant of degree $4$ and
order 2 defined in \cite{dixmier}.  If $T \in\GL_3(R)$, then $\rho$ satisfies
the relation $\rho(F.T) = \det(T)^{6} \cdot T^{-1}.\rho(F)$. Let
$\rho_0=\rho(Q_0^2)$. Then
 \begin{equation} \label{eq:rho0}
 \rho_0= - 2^{12} \cdot 3^{-2} \cdot 5 \cdot 7 \cdot (v_2^2-v_1 v_3).
 \end{equation}

 \begin{lemma} Suppose that $\cO$ is complete. Let $\W$ be a smooth scheme
   over $\cO$. Then for any $s\ge 1$, the canonical map
$$ \W(\cO)\to \W(\cO/(\pi^s))$$
is surjective.
\end{lemma}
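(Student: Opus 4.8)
The plan is to lift the given point one power of $\pi$ at a time, and then to glue the resulting compatible system using the completeness of $\cO$. First I would pass to an affine chart: given $x_s\in \W(\cO/(\pi^s))$, the scheme $\Spec(\cO/(\pi^s))$ has a single point, so $x_s$ sends it to one point $w_0\in\W$; choosing an affine open $U=\Spec A\subseteq\W$ with $w_0\in U$, the point $x_s$ factors through $U$, and $U\to\Spec\cO$ is smooth and of finite presentation. It therefore suffices to prove that $U(\cO)\to U(\cO/(\pi^s))$ is surjective.

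The key step is infinitesimal lifting. For every integer $n\ge s$ (so in particular $n\ge 1$), the surjection $\cO/(\pi^{n+1})\twoheadrightarrow\cO/(\pi^n)$ has kernel $\pi^n\cO/\pi^{n+1}\cO$, whose square is zero because $2n\ge n+1$. Since $U$ is smooth over $\cO$ it is formally smooth, so the infinitesimal lifting criterion shows that every $\cO/(\pi^n)$-point of $U$ extends to an $\cO/(\pi^{n+1})$-point. Iterating from $x_s$, I obtain a compatible system $(x_n)_{n\ge s}$ with $x_n\in U(\cO/(\pi^n))$ mapping to $x_{n-1}$ under reduction.

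Finally I would assemble this system into an honest $\cO$-point. Writing $A=\cO[t_1,\dots,t_m]/(g_1,\dots,g_r)$, each $x_n$ is a tuple $a^{(n)}\in(\cO/(\pi^n))^m$ with $g_j(a^{(n)})\equiv 0\pmod{\pi^n}$ for all $j$; compatibility means the coordinatewise sequences are coherent, so by $\cO=\varprojlim_n\cO/(\pi^n)$ they define $a\in\cO^m$, and $g_j(a)\equiv 0\pmod{\pi^n}$ for every $n$ forces $g_j(a)=0$. Thus $a$ gives a point $x\in U(\cO)\subseteq\W(\cO)$ reducing to $x_s$, as wanted. The only point requiring a little care — the closest thing to an obstacle here — is this last passage to the limit, which genuinely uses both that $\cO$ is $\pi$-adically complete and that $U$ is of finite presentation (the latter being why the affine reduction in the first step was made); alternatively one may simply observe that $(\cO,\pi^s\cO)$ is a Henselian pair and quote the standard fact that smooth morphisms satisfy the lifting property along Henselian pairs.
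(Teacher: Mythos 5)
Your proof is correct and follows essentially the same route as the paper's: reduce to an affine chart, lift step by step through the square-zero extensions $\cO/(\pi^{n+1})\to\cO/(\pi^n)$ using (formal) smoothness, and pass to the limit via $\cO=\varprojlim_n\cO/(\pi^n)$. You simply spell out the details (the square-zero verification and the role of finite presentation in the limit step) that the paper leaves implicit.
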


\begin{proof}
  Indeed, for any
  $\sigma\in \W(\cO/(\pi^s))=\mathrm{Hom}_{\cO}(\Spec(\cO/(\pi^s)), \W)$ we
  can replace $\W$ by an affine open neighborhood of the image of $\sigma$ and
  it is enough to lift $\sigma$ there. So we can suppose $\W$ is affine. As
  $\cO$ is complete, the canonical map
  $$ \W(\cO)\to {\projlim}_{n\ge s} \W(\cO/(\pi^n))$$
  is an isomorphism. By the smoothness of $\W$, for any pair of integers
  $n>r>0$, the canonical map $\W(\cO/(\pi^n))\to \W(\cO/(\pi^r))$ is
  surjective.  So $\W(\cO)\to \W(\cO/(\pi^s))$ is surjective.
\end{proof}

\begin{proposition} \label{prop:rig} Suppose that $\O$ is complete.  Let
  $\rho \in \cO[v_1, v_2, v_3]$ be a quadratic (dual) form such that for some
  $s\ge 1$ we have $\rho \equiv \rho_0 \mod \pi^s$. Then there exists
  $T\in \mathrm{GL}_3(\cO)$ such that
  $$T\equiv \mathrm{Id}_3 \mod \pi^s, \quad T.\rho=u \rho_0.$$
  with $u \in \O$ such that $u \equiv 1 \mod \pi^s$.
\end{proposition}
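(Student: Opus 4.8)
The plan is to recognise the sought-after identity $T.\rho = u\rho_0$ as a lifting problem along a smooth morphism, and then to feed it into the preceding lemma. Let $\A^{6}_{\cO}$ be the affine space parametrising ternary quadratic dual forms in $v_1,v_2,v_3$, and consider the morphism of $\cO$-schemes
$$ a\colon \GL_{3,\cO}\times_{\cO}\G_{m,\cO}\longrightarrow \A^{6}_{\cO},\qquad (T,u)\longmapsto u\,(T.\rho_0). $$
We view $\rho$ as an $\cO$-point $\rho\colon\Spec\cO\to\A^{6}_{\cO}$; the hypothesis $\rho\equiv\rho_0\pmod{\pi^{s}}$ says exactly that $a(\mathrm{Id}_3,1)$ and $\rho$ have the same reduction modulo $\pi^{s}$.

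First I would check that $a$ is smooth along the section $(\mathrm{Id}_3,1)$. Its differential at that point sends $(\xi,t)\in M_3\oplus\cO$ to $\xi.\rho_0+t\,\rho_0$. Since $\rho_0$ is a non-degenerate quadratic form and $\caract(k)\ne 2$ (recall $p=0$ or $p>7$), the infinitesimal orbit map $\xi\mapsto\xi.\rho_0$ is already surjective onto $\Sym^{2}(V)$ — equivalently, the stabiliser $\SO(\rho_0)$ has the expected dimension $3$, so $\dim\GL_3-\dim\SO(\rho_0)=6=\dim\Sym^{2}(V)$. The same holds after base change to $k$ because $\caract(k)\ne 2$; hence, by the fibral criterion for smoothness (both $\GL_{3,\cO}\times_{\cO}\G_{m,\cO}$ and $\A^{6}_{\cO}$ are smooth over $\cO$, and the differential is surjective on the generic and on the special fibre), $a$ is smooth on an open subscheme $\mathcal U$ containing the section $(\mathrm{Id}_3,1)$.

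Next, set $\W:=a^{-1}(\rho)\cap\mathcal U$, a locally closed subscheme of $\GL_{3,\cO}\times_{\cO}\G_{m,\cO}$. Being the base change of the smooth morphism $a|_{\mathcal U}$ along $\rho$, $\W$ is smooth over $\cO$. Moreover $\W(\cO/(\pi^{s}))\ne\emptyset$: the point $(\mathrm{Id}_3,1)\bmod\pi^{s}$ lies in $\mathcal U$ (as $\Spec(\cO/(\pi^{s}))$ is local and the section $(\mathrm{Id}_3,1)$ factors through $\mathcal U$) and maps under $a$ to $\rho_0\equiv\rho\pmod{\pi^{s}}$. Since $\cO$ is complete, the preceding lemma applies to $\W$ and produces $(T',u')\in\W(\cO)$ congruent to $(\mathrm{Id}_3,1)$ modulo $\pi^{s}$, so that $u'\,(T'.\rho_0)=\rho$ with $T'\equiv\mathrm{Id}_3$ and $u'\equiv 1\pmod{\pi^{s}}$ (in particular $u'\in\cO^{\times}$). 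Taking $T:=T'^{-1}\in\GL_3(\cO)$, which is still $\equiv\mathrm{Id}_3\pmod{\pi^{s}}$, and $u:=u'$, we get $T.\rho=u'\bigl(T'^{-1}.(T'.\rho_0)\bigr)=u'\rho_0=u\rho_0$, as wanted.

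The only step that is not formal is the smoothness of $a$ at $(\mathrm{Id}_3,1)$ — concretely, the surjectivity of $\xi\mapsto\xi M+M\xi^{t}$ for an invertible symmetric matrix $M$ (one may take $\xi=\tfrac12 SM^{-1}$ to hit a prescribed symmetric $S$), which is precisely where $\caract(k)\ne 2$ enters, together with the small amount of bookkeeping needed to pass from fibrewise to relative smoothness. Everything else is the infinitesimal lifting property already packaged in the preceding lemma. (One could even omit the $\G_{m}$-factor and obtain directly $u=1$, since $\xi\mapsto\xi.\rho_0$ alone is surjective; the factor is kept only to match the statement.)
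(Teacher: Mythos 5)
Your proof is correct and follows essentially the same route as the paper: the paper also realizes the pairs $(T,u)$ with $T.\rho=u\rho_0$ as the points of a scheme $\W=\underline{\mathrm{Isom}}_{\cO}(V_+(\rho),V_+(\rho_0))$, checks smoothness at $(\mathrm{Id}_3,1)$ by the very same linear-algebra computation (surjectivity of $\xi\mapsto\xi M+M\,{}^t\xi$ for the nondegenerate $\rho_0$, using $\caract(k)\ne 2$), and then lifts the mod-$\pi^s$ point via the preceding lemma. Your presentation of $\W$ as a fiber of the orbit map, and the final passage from $T'$ to $T=T'^{-1}$, are only cosmetic repackagings of the paper's argument.
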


\begin{proof}
  Let $\W=\underline{\mathrm{Isom}}_{\cO}(V_+(\rho), V_+(\rho_0))$ be the
  scheme of linear isomorphisms. For any $\cO$-algebra $A$, $\W(A)$ is the set
  of matrices $U=(x_{ij})_{1\le i, j\le 3}$ such that
  $${}^tU\rho U=y\rho_0, \quad y\in A^*$$ (here we identify the quadratic forms with their matrices
  in the canonical basis).  The scheme $\W$ is defined by $6$ equations in the
  affine space of relative dimension $10$ ($9$ for the entries of $U$ plus $1$
  for the ratio $y$). Its Jacobian matrix at the point
  $I: U=\mathrm{Id}_{3}\in M_{3\times 3}(k), y=1$ has rank $4$\footnote{For
    the explicit computations one can take for $\rho_0$ the sum of the squares
    $\sum_i v_i^2$ as the residue characteristic is different from $2$.}.
  This implies that $\W$ is smooth at this point.

  By hypothesis, there exists a $\sigma\in \W(\cO/(\pi^s))$ whose image in
  $\W(k)$ is equal to the point $I$ above. Applying the previous lemma to the
  smooth locus of $\W$ (in fact $\W$ is smooth everywhere), we find a lifting
  $T \in \W(\cO)$ of $\sigma$. Then $T \in \GL_3(\O)$ satisfies
  $$T\equiv \mathrm{Id}_3 \mod \pi^s, \quad T.\rho= u \rho_0$$
  with $u \equiv 1 \pmod{\pi^s}$.  Thus $T$ satisfies the required properties.
\end{proof}

\begin{corollary}\label{cor:T1} Suppose that $\O$ is complete and let
  $F=Q_0^2+\pi^{2s} G$ be a \PHM. There exists a matrix
  $T_1 \in\operatorname{GL}_3(\O)$ with
  $T_1 \equiv \operatorname{Id}_3\, \bmod\, \pi^{2s}$ such that
  $F_1=F.T_1=Q_0^2+\pi^{2 s} G_1$ satisfies $\rho(F_1) =\rho_0$.
 \end{corollary}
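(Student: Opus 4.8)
The plan is to obtain $T_1$ by rigidifying the contravariant $\rho$ with Proposition~\ref{prop:rig}, after first observing that its hypothesis is automatically satisfied and then correcting the scalar ambiguity it leaves behind. First I would check that $\rho(F)\equiv\rho_0\bmod\pi^s$: since $\rho$ is a polynomial map, homogeneous of degree $4$ in the coefficients of its argument, and since $p\notin\{2,3,5,7\}$ forces $\rho$ to have coefficients in $\cO$, expanding $\rho(Q_0^2+\pi^{s}G)$ in powers of $\pi^{s}$ gives $\rho(F)=\rho(Q_0^2)+\pi^{s}(\cdots)=\rho_0+\pi^{s}(\cdots)$ with all terms in $\cO[v_1,v_2,v_3]$. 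Hence Proposition~\ref{prop:rig} applies with $\rho:=\rho(F)$, producing $S_0\in\GL_3(\cO)$ with $S_0\equiv\operatorname{Id}_3\bmod\pi^s$ and $S_0.\rho(F)=y_0\,\rho_0$ for some $y_0\in\cO$ with $y_0\equiv 1\bmod\pi^s$.

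Next I would remove the unit $y_0$ by rescaling $S_0$. Because $\cO$ is complete and $p\ne 2$ (so $16\in\cO^*$), the map $c\mapsto c^{16}$ is a bijection of $1+\pi^s\cO$; choose $c\in 1+\pi^s\cO$ with $c^{16}=\det(S_0)^{-6}\,y_0$ and set $T_1:=(c\,S_0)^{-1}$. Then $c\,S_0\in\operatorname{Id}_3+\pi^s M_3(\cO)$, hence $T_1\in\GL_3(\cO)$ and $T_1\equiv\operatorname{Id}_3\bmod\pi^s$. Using the contravariance relation $\rho(F.T)=\det(T)^{6}\cdot T^{-1}.\rho(F)$, together with the fact that a scalar $c$ acts on $\Sym^2(V)$ through $c^2$, one computes
$$\rho(F.T_1)=\det(c\,S_0)^{-6}\,(c\,S_0).\rho(F)=c^{-18}\det(S_0)^{-6}\cdot c^{2}y_0\,\rho_0=c^{-16}\det(S_0)^{-6}y_0\,\rho_0=\rho_0 .$$

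It remains to see that $F_1:=F.T_1$ still has the shape of a \PHM{}. Since $T_1\equiv\operatorname{Id}_3\bmod\pi^s$ has entries in $\cO$, we have $Q_0.T_1=Q_0+\pi^{s}R$ for a ternary quadratic form $R\in\cO[x_1,x_2,x_3]$, so that
$$F_1=(Q_0.T_1)^2+\pi^{s}(G.T_1)=Q_0^2+\pi^{s}\bigl(2\,Q_0R+G.T_1+\pi^{s}R^2\bigr),$$
the bracket being a quartic form with coefficients in $\cO$. Writing $\pi^{s_1}$ (with $s_1\ge s$) for the exact power of $\pi$ dividing this bracket and $G_1$ for the quotient, we get $F_1=Q_0^2+\pi^{s_1}G_1$, still a \PHM{}, now with $\rho(F_1)=\rho_0$, which is the assertion. (When the starting \PHM{} is good, one in fact has $s_1=s$, because then $\overline{G}$ is prime to $q=\overline{Q_0}$ and the bracket reduces mod $\pi$ to $\overline{G}+2q\overline{R}\ne 0$.)

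The step I expect to be the main obstacle is the middle one: Proposition~\ref{prop:rig} only matches $V_+(\rho(F))$ with $V_+(\rho_0)$ up to the scalar $y_0$, i.e.\ conformally rather than isometrically, while $\rho$ is a contravariant of positive weight, so scalar matrices act on $\rho(F)$ nontrivially. Making these two effects cancel on the nose is precisely what forces the $16$-divisibility of $1+\pi^s\cO$, and hence the use of $p\ne 2$; this is harmless under the standing hypothesis $p=0$ or $p>7$.
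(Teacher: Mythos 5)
Your proof is correct and follows essentially the same route as the paper: apply Proposition~\ref{prop:rig} to $\rho(F)$ (whose hypothesis you rightly note holds because $\rho(Q_0^2+\pi^sG)\equiv\rho_0\bmod\pi^s$), then kill the residual unit by a scalar root extracted via Hensel's lemma using that $\O$ is complete and $p\ne 2$. The only cosmetic difference is that the paper rescales the form $F'$ by a fourth root of the unit (using that $\rho$ is of degree $4$ in the coefficients), whereas you absorb a sixteenth root into the matrix $T_1$; these are the same correction since a scalar matrix $c\,\mathrm{Id}_3$ multiplies the quartic by $c^4$.
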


\begin{proof}
  Let $T$ be the matrix of Proposition~\ref{prop:rig} with $\rho=\rho(F) $ and
  let $F'=F.T^{-1}$. Since one has
  $T^{-1} \equiv \operatorname{Id}_3 \pmod{\pi^{2s}}$, we see that
  $F'=Q_0^2+ \pi^{2s} G'$. Moreover
  $\rho(F')=\det(T)^6 \cdot T.\rho(F) = \det(T)^6 \cdot u \cdot \rho_0.$ Let
  now $\alpha = (\det(T)^6 \cdot u)^{-1/4}$ with
  $\alpha \equiv 1 \pmod{\pi^{2s}}$. Then the form
  $F_1=\alpha F'= F.(\alpha T^{-1})$ satisfies the hypotheses.
\end{proof}

\begin{proposition}\label{newF2}
  Let $C/K: F=0$ with $F=Q_0^2 + \pi^{2s} G(x_1,x_2,x_3)$ be a \PHM{} of $C$
  such that $\rho(F) =\rho_0$.  If $b_8(G)$ is GIT-stable over $\bar{K}$, up
  to a possible extension of $K$, we can assume that $\overline{b_8(G)}$ is
  GIT-semi-stable.
\end{proposition}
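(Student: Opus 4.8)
The plan is to carry the question over to the space of binary octics — where geometric invariant theory over $\O$ is available — solve it there, transport the solution back to the ternary quartic through the morphism $h$, and finally clean up the resulting \PHM{}. The structural fact that makes everything cohere is that the linear map $\overline{b_8}$ has kernel exactly $\overline{Q_0}\cdot\Sym^2(V^{\vee})$: because $V_+(Q_0)$ is the image of the embedding $\P^1\hookrightarrow\P^2$ one has $b_8(Q_0\cdot g)=Q_0(x^2,2xz,z^2)\cdot g(x^2,2xz,z^2)=0$ since $Q_0(x^2,2xz,z^2)=(2xz)^2-4x^2z^2=0$, and a dimension count ($\dim\Sym^4(V^{\vee})-\dim\Sym^8(W^{\vee})=15-9=6=\dim\Sym^2(V^{\vee})$, using $p\ne2$ so that $\overline{b_8}$ is onto) shows there is nothing more. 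In particular $b_8(Q_0^2)=0$, so any \PHM{} $F=Q_0^2+\pi^sG$ satisfies $b_8(F)=\pi^sb_8(G)$, and $b_8(G)$ is primitive precisely when $\overline{G}$ is prime to $\overline{Q_0}$. I will also use that $h(\SL_2)=\SO(Q_0)$ fixes $Q_0$ and, since $Q_0^2.T=(Q_0.T)^2=Q_0^2$ and determinants are $1$ on $\SL_3$, fixes $\rho_0=\rho(Q_0^2)$ as well.

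First I pass to the octic side. As $\O$ is complete it is excellent, and since $p=0$ or $p>7$ the Shioda invariants $(j_2,\dots,j_7)$ form a HSOP over $\O$ for binary octics under $\SL_{2,\O}$ (see the discussion before Corollary~\ref{cor:Shiodas}). Since $b_8(G)$ is stable, Proposition~\ref{prop:ssred} (equivalently Corollary~\ref{cor:shabur}) produces, after a finite extension of $K$, a matrix $T_2\in\SL_2(K)$ and a scalar $\lambda\in K^{\times}$ such that $f:=\lambda\,\bigl(b_8(G).T_2\bigr)$ is an integral primitive binary octic with semi-stable reduction $\overline{f}$. Replacing $F$ by $F.h(T_2)$ — which fixes $Q_0$, keeps $\rho(F)=\rho_0$, and, by the equivariance $b_8(F.h(T_2))=b_8(F).T_2$ together with $b_8(Q_0^2)=0$, turns the quartic form of the \PHM{} into one whose $b_8$ equals $\lambda^{-1}f$ — I reduce to the situation where the ternary quartic form $G$ of our \PHM{} satisfies $b_8(G)=\lambda^{-1}f$, at the cost that $G$ is now a form over the extended field and need not be integral.

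It remains to repair integrality while tracking $b_8$. I renormalize to a maximal \PHM{}: clear denominators, and whenever the reduction of the (primitive integral) quartic part is divisible by $\overline{Q_0}$, complete the square — this does not touch the polynomial but raises the exponent and replaces $Q_0$ by a quadric $\equiv Q_0\bmod\pi^{s'}$ — then rotate that quadric back to $Q_0$ by an isomorphism $\equiv\operatorname{Id}\bmod\pi^{s'}$, which exists by the smoothness of $\underline{\mathrm{Isom}}\bigl(V_+(\,\cdot\,),V_+(Q_0)\bigr)$ and completeness of $\O$, exactly as in the proof of Proposition~\ref{prop:rig}. Since $C$ is geometrically integral no \PHM{} form is a square, so the exponent is bounded and this terminates at a \PHM{} $F_2=Q_0^2+\pi^{s_2}G_2$ with $\overline{G_2}$ prime to $\overline{Q_0}$; by the kernel computation $b_8(G_2)$ is then integral and primitive. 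Every move above changes the quartic form only by rescaling and by forms divisible by ever larger powers of $\pi$, and $b_8$ is linear, so $\pi^{s_2}b_8(G_2)=b_8(F_2)=c\,\pi^sb_8(G)+O(\pi^{N})=c\lambda^{-1}\pi^sf+O(\pi^{N})$ for some $c\in K^{\times}$ and $N$ as large as wanted; comparing valuations (both $f$ and $b_8(G_2)$ being primitive) forces $\overline{b_8(G_2)}$ to be a unit times $\overline{f}$, hence semi-stable. Finally, if $\rho(F_2)\ne\rho_0$, Corollary~\ref{cor:T1} restores it at the cost of one more matrix $\equiv\operatorname{Id}\bmod\pi^{s_2}$, which alters $\overline{G_2}$ only inside $\ker\overline{b_8}=\overline{Q_0}\cdot\Sym^2(V^{\vee})$ and so leaves $\overline{b_8(G_2)}$ semi-stable; a last ramified extension makes $s_2$ even if needed.

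The step I expect to be the real obstacle is this last renormalization together with the valuation bookkeeping it entails: one must make sure that clearing denominators and completing squares never drop the exponent below $1$ nor leave the \PHM{} framework, and that the accumulated constant $c$ and the error term $O(\pi^{N})$ behave as claimed when the $\SL_2$-actions on the two sides are matched while keeping everything integral. The two facts that make this harmless are $\ker\overline{b_8}=\overline{Q_0}\cdot\Sym^2(V^{\vee})$ — so that every cleanup move changes $\overline{b_8(G)}$ only by a scalar — and the maximality of $s_2$, which guarantees that $b_8(G_2)$ is genuinely primitive and therefore has a nonzero, hence semi-stable, reduction.
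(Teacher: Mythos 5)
Your overall strategy---transport the problem to binary octics via $b_8$ and $h$, produce a semi-stable integral representative there by Corollary~\ref{cor:shabur}, and pull it back through $h$---is the same as the paper's. The kernel computation $\ker b_8 = Q_0\cdot\Sym^2(V^{\vee})$ and the equivariance bookkeeping are correct. But there is a genuine gap at exactly the point you flag as ``the real obstacle'', and it is not harmless: nothing in your argument prevents the renormalization from leaving the \PHM{} framework. Concretely, after acting by $h(T_2)$ with $T_2\in\SL_2(K)$ (not $\SL_2(\O)$), write $G.h(T_2)=Q_0Q_1+G_1$ with $G_1$ in the complement of $\ker b_8$. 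The identity $b_8(G_1)=\lambda^{-1}f$ controls the coefficients of $G_1$ (it yields $G_1=\pi^rG_2$ with $G_2$ integral, $r\ge 0$), but it says nothing about $Q_1$, which lies in the kernel of $b_8$ and is therefore invisible to all of your octic-side bookkeeping. If $Q_1=\pi^tQ_2$ with $t$ very negative, then $F.h(T_2)=Q_0^2+\pi^{s+t}Q_0Q_2+\pi^{s+r}G_2$ has $s+t\le 0$, its primitive integral rescaling reduces to a multiple of $\overline{Q_0Q_2}$ rather than to $\overline{Q_0}^{\,2}$, and your ``clear denominators / complete the square'' loop never gets started. Both your termination argument (geometric integrality of $C$) and your $O(\pi^N)$ error estimate presuppose that the exponent stays positive, which is precisely what is unproven.

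This is where the hypothesis $\rho(F)=\rho_0$ must be used, and not merely ``restored at the end'' by Corollary~\ref{cor:T1}: since $h(T_2)$ preserves $\rho_0$, one has $\rho(F.h(T_2))=\rho_0$ exactly, and Lemma~\ref{a>=b} then forces $t\ge r$, so that $F.h(T_2)=Q_0^2+\pi^{s+r}\bigl(G_2+\pi^{t-r}Q_0Q_2\bigr)$ is already an honest \PHM{} whose $b_8$-image is the semi-stable $f_1$---no iterative renormalization is needed. In other words, the contravariant $\rho$ is the rigidifying device that bounds the kernel component $Q_0Q_1$; treating it as an afterthought leaves the decisive inequality unestablished. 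A secondary issue: the correcting matrices produced by Proposition~\ref{prop:rig} lie in $\GL_3(\O)$ but not in $h(\SL_2)$, so $b_8$ is not equivariant for them, and your claim that $b_8(F_2)$ equals $c\,\pi^s b_8(G).T_2$ up to $O(\pi^N)$ again needs the very integrality statement that is missing in order to make sense of the error term.
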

\begin{proof}
  Let $f=b_8(G)$. By Corollary~\ref{cor:shabur}, after a possible extension of
  $K$, there exists a matrix $T\in\SL_2(K)$ such that $f.T=\pi^r \cdot f_1$
  with $r \in \Z$ and $f_1 \in \mathcal{O}[x,z]$ such that $\bar{f_1}=0$ is
  GIT-semi-stable.  Comparing the discriminants on both sides, we see that
  $r \geq 0$.  Since the kernel of $b_8$ is $Q_0 Q$ where $Q$ is any quadratic
  form, we can write $G.h(T)$ in a unique way as $Q_0 Q_1+G_1$ with
  $Q_1 \in K[x_1,x_2,x_3]$ a quadratic form and
  $$G_1=\beta_8 x_1^4 +\beta_7 x_1^3 x_2+ \beta_6 x_1^3 x_3 + \beta_5  x_1 x_2^3 + \beta_4 x_1 x_2^2 x_3 +\beta_3  x_2^3 x_3 +\beta_2 x_2^2 x_3^2 +\beta_1 x_2 x_3^3 + \beta_0 x_3^4 \in K[x_1,x_2,x_3].$$
  Since
  \begin{eqnarray*}
    b_8(G_1) & =& \beta_8 x^8 + 2 \beta_7 x^7 z + \beta_6 x^6 z^2 + 8 \beta_5
                  x^5 z^3 + 4 \beta_4 x^4 z^4 + 8 \beta_3 x^3 z^5 + 4 \beta_2
                  x^2 z^6 + 2 \beta_1 x z^7 + \beta_0 z^8 \\
             &=& \pi^{r} f_1,
  \end{eqnarray*}
  we get that $v(\beta_i) \geq r$ for all $i\in\{0,\ldots,8\}$, in particular
  there exists $G_2 \in \O[x_1,x_2,x_3]$ such that $G_1=\pi^r G_2$. Let us now
  write $Q_1=\pi^t Q_2$ with $t \in \Z$ and $Q_2 \in \O[x_1,x_2,x_3]$
  primitive. We have that
  $$F.h(T) = Q_0^2 + \pi^{2s} G.h(T) = Q_0^2 + \pi^{2s+t} Q_0 Q_2 + \pi^{2s+r} G_2.$$
  Since $\rho(F.h(T))=\rho(F) =\rho_0$, Lemma~\ref{a>=b} implies that
  $2s+t \geq 2s+r$ so $t \geq r$. Hence, we get that
  $$F.h(T) = Q_0^2 + \pi^{2s+r} \underbrace{(G_2+ \pi^{t-r} Q_0 Q_2)}_{ G_3}.$$
  We have that $b_8(G_3)=b_8(G_2)=b_8(G_1/\pi^r)=f_1$. This last equality also
  implies that $G_3$ is primitive.
\end{proof}

\begin{lemma}\label{a>=b}
  Let $F=Q_0^2+\pi^a Q_0Q + \pi^b G \in K[x_1,x_2,x_3]$ be a ternary quartic
  form such that $a,b \in\mathbb{Z}$, $b>0$, $Q$ is an integral primitive
  quadric and $G$ is an integral quartic form. If $\rho(F) \equiv \rho_0$
  $\pmod{\pi^{a+1}}$, then $a\geq b$.
\end{lemma}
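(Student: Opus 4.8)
The plan is to argue by contradiction: assume $\rho(F)\equiv\rho_0\pmod{\pi^{a+1}}$ but $a<b$; we may and do assume in addition $a\ge 1$ (see the last paragraph for $a\le 0$). Recall that $\rho$ is a homogeneous polynomial of degree $4$ in the fifteen coefficients of a ternary quartic, with coefficients in $\O$ in our setting ($p=0$ or $p>7$). Writing $F=Q_0^2+E$ with $E=\pi^aQ_0Q+\pi^bG$ and Taylor-expanding around $Q_0^2$,
\[
  \rho(F)=\rho(Q_0^2)+D\rho|_{Q_0^2}(E)+R,
\]
where $R$ collects the monomials of degree $\ge 2$ in the coefficients of $E$. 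Since $a\le b$, every coefficient of $E$ lies in $\pi^a\O$, so $R\in\pi^{2a}\O[v_1,v_2,v_3]\subseteq\pi^{a+1}\O[v_1,v_2,v_3]$ (using $a\ge 1$); moreover $D\rho|_{Q_0^2}(E)=\pi^aD\rho|_{Q_0^2}(Q_0Q)+\pi^bD\rho|_{Q_0^2}(G)$ with $D\rho|_{Q_0^2}(G)\in\O[v_1,v_2,v_3]$, so the last summand lies in $\pi^b\O[v_1,v_2,v_3]\subseteq\pi^{a+1}\O[v_1,v_2,v_3]$ (using $b\ge a+1$). Hence
\[
  \rho(F)\equiv\rho_0+\pi^a\,D\rho|_{Q_0^2}(Q_0Q)\pmod{\pi^{a+1}},
\]
and the hypothesis forces $D\rho|_{Q_0^2}(Q_0Q)\equiv 0\pmod\pi$, i.e.\ $D\rho|_{\bar Q_0^2}(\bar Q_0\bar Q)=0$ over $k$, where $\bar Q$ is the reduction of $Q$, which is nonzero because $Q$ is primitive.

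The second step is to identify this differential. It is classical that $\rho$ of a double conic is proportional to the dual conic: there is a universal constant $c$, a unit in $\O$ since $p>7$, with $\rho(L^2)=c\cdot\operatorname{adj}(L)$ for every quadratic form $L$, where $\operatorname{adj}(L)$ is the adjugate of the Gram matrix of $L$ regarded again as a quadratic form in $v_1,v_2,v_3$. One checks this on $L=Q_0$ against \eqref{eq:rho0}, and it extends to all $L$ by the equivariance $\rho(L^2.T)=\det(T)^6\,T^{-1}.\rho(L^2)$ together with the fact that non-degenerate quadrics form a single $\GL_3$-orbit (both sides being polynomial in the coefficients of $L$). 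Differentiating $t\mapsto\rho\bigl((Q_0+tS)^2\bigr)=c\cdot\operatorname{adj}\bigl(M_{Q_0}+tM_S\bigr)$ at $t=0$ yields, for every quadratic form $S$,
\[
  2\,D\rho|_{Q_0^2}(Q_0S)=c\cdot\phi(S),\qquad
  \phi(S):=\left.\tfrac{d}{dt}\right|_{0}\operatorname{adj}\bigl(M_{Q_0}+tM_S\bigr),
\]
where $M_{(-)}$ denotes Gram matrices; so the vanishing found above is equivalent to $\phi(\bar Q)=0$ over $k$.

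The third and final step — the heart of the matter — is that $\phi$ is injective on symmetric matrices. Since $p\ne 2$, $Q_0$ is non-degenerate over $k$, so $A:=M_{Q_0}$ is invertible; and for any invertible $3\times 3$ matrix $A$ one computes, from $\operatorname{adj}(A+tN)=\det(A+tN)(A+tN)^{-1}$,
\[
  \phi_A(N)=\det(A)\bigl(\tr(A^{-1}N)\,A^{-1}-A^{-1}NA^{-1}\bigr).
\]
If $\phi_A(N)=0$, then $\tr(A^{-1}N)\,A=N$, hence $N=\lambda A$ with $\lambda=\tr(A^{-1}N)=\lambda\,\tr(\mathrm{Id}_3)=3\lambda$, so $2\lambda=0$ and, as $p\ne 2$, $\lambda=0$ and $N=0$. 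Applying this with $N=M_{\bar Q}$ gives $\bar Q=0$, contradicting the primitivity of $Q$; therefore $a\ge b$.

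I expect the main obstacle to be the middle part: establishing the dual-conic identity $\rho(L^2)=c\cdot\operatorname{adj}(L)$ with a unit constant and deducing the injectivity of $\phi_{M_{Q_0}}$ — i.e.\ pinning down that the only quartics of the shape $Q_0Q$ in the kernel of the linearization of $\rho$ at $Q_0^2$ would have $Q$ proportional to $Q_0$, and that the hypothesis $p\ne 2$ rules these out as well, so the kernel is trivial. A minor point requiring care is the reduction to $a\ge 1$: when $a\le 0$ the term $\pi^aQ_0Q$ can be absorbed, up to a unit, into $Q_0^2$, so this case either does not occur in the application (where $a=s+t$ with $s\ge 1$ and $Q$ primitive) or is disposed of by an easier valuation estimate after rescaling $F$ by $\pi^{-a}$.
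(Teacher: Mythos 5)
Your overall strategy is genuinely different from the paper's and, once one error is repaired, it works. The paper argues by brute force: it writes out the six coefficients of $\rho(Q_0^2+\pi^aQ_0Q+\pi^bG)$ modulo $\pi^{a+1}$ as explicit linear expressions in $\pi^a a_1,\dots,\pi^a a_6$ and reads off $v(a_i)\ge 1$ directly. You instead linearize $\rho$ at $Q_0^2$ and prove that the linearization is injective on the subspace $Q_0\cdot\Sym^2(V^\vee)$ over $k$; that is more conceptual and explains \emph{why} the paper's $6\times 6$ linear system is nonsingular. Your step 1 is fine (with the same implicit restriction $a\ge 1$ needed to discard the quadratic remainder $R$ --- a restriction the paper's proof shares, since its displayed formula also drops all terms of degree $\ge 2$ in the perturbation), and your step 3 (injectivity of the derivative of the adjugate at an invertible symmetric matrix) is correct.

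The genuine problem is the identity in step 2: $\rho(L^2)=c\cdot\operatorname{adj}(M_L)$ cannot hold. The left side is homogeneous of degree $8$ in the coefficients of $L$ (since $\rho$ has degree $4$ in the coefficients of the quartic, which are quadratic in those of $L$), while $\operatorname{adj}(M_L)$ has degree $2$; equivalently, the equivariance check you propose to use fails, because $\rho((L.T)^2)=\det(T)^6\,T^{-1}.\rho(L^2)$ whereas $\operatorname{adj}(M_{L.T})$ picks up only $\det(T)^2$. The correct identity is $\rho(L^2)=c\,\det(M_L)^2\operatorname{adj}(M_L)$ (weights $6=4+2$ now match), and against \eqref{eq:rho0} one finds $c=2^6\cdot 5\cdot 7/3^2$, a unit since $p>7$. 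This changes your differential: with $A=M_{Q_0}$, $N=M_S$,
\begin{equation*}
  2\,D\rho|_{Q_0^2}(Q_0S)=c\left.\tfrac{d}{dt}\right|_{0}\bigl(\det(A+tN)^2\operatorname{adj}(A+tN)\bigr)
  =c\,\det(A)^3\bigl(3\tr(A^{-1}N)A^{-1}-A^{-1}NA^{-1}\bigr),
\end{equation*}
so the kernel condition becomes $N=3\tr(A^{-1}N)A$, giving $\lambda=9\lambda$, i.e.\ $8\lambda=0$, rather than your $2\lambda=0$; either way $\lambda=0$ and $N=0$ for $p\ne 2$, and $\det(A)=-4$ is a unit. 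With this correction your proof is complete and reaches the same conclusion as the paper's.
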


\begin{proof}
  Assume $a<b$ and write
  $Q=a_1 x_1^2 + a_2 x_1x_2 + a_3 x_1x_3 + a_4 x_2^2 + a_5 x_2x_3 + a_6
  x_3^2$.  We see that modulo $\pi^{a+1}$ , the coefficients of $\rho(F)$ are
  equal to
\begin{equation*}
  \begin{split}
  \left( \frac{17920}{9}\pi^a a_6, -\frac{35840}{9} \pi^a a_5,
    -\frac{89600}{9}\pi^a a_4 + \frac{71680}{3}\pi^a a_3 + \frac{143360}{9},\right.\\
  \left.\frac{35840}{3}\pi^a a_4 - \frac{143360}{9}\pi^a a_3 - \frac{143360}{9},
    -\frac{35840}{9}\pi^a a_2, \frac{17920}{9}\pi^a a_1\right).
\end{split}
\end{equation*}
The equality $\rho(F) \equiv \rho_0$ $\pmod{\pi^{a+1}}$ imposes that they
should be congruent to
$$(0,0,\frac{143360}{9}, - \frac{143360}{9}, 0,0)$$ modulo $\pi^{a+1}$, from where we easily check that
$v(a_i)\geq 1$ for all $i\in\{1,\ldots,6\}$ and we get a contradiction with
$Q$ being primitive. \end{proof} We can now finish the proof of
Theorem~\ref{th:main}.
\begin{proof}[Reverse implication of Theorem~\ref{th:main}]
  Let us assume that
$$v_{\DO}(I_3(F))=0, \; v_{\DO}(I_{27}(F))>0 \text{ and } v_{\iv}(\iota_{42}(F))=0.$$
By Proposition~\ref{prop:ssred}, after a possible extension of $K$, there
exists a form $F_0$ that is $\GL_3(K)$-equivalent to $F$, integral and
primitive, and such that $v(I_3(F_0))=0$ and $v(I_{27}(F_0))>0$. Since
$v_{\iv}(\iota_{42}(F)) = v_{\iv}(\iota_{42}(F_0)) = 0$ and
$v(\iota_{42}(F_0)) = 5 v(I_3(F_0)) + v(I_{27}(F_0))>0$, the condition
$v_{\iv}(\iota_{42}(F))=0$ imposes that $v(\iota_{3i}(F_0)) > 0$ for
$i=2,\ldots,7$.  This yields these 6 equations that must be satisfied by
$\bar{F_0}$:
\begin{displaymath}
  I_{6} = \frac{1}{180}\,I_3^2,\ %
  I_{9} = \frac{49}{36}\,I_3^3,\ %
  J_{9} = \frac{49}{60}\,I_3^3,\ %
  I_{12} = \frac{343}{1620}\,I_3^4,\ %
  J_{12} = \frac{49}{36}\,I_3^4,\ %
  J_{15} + 9\,I_{15} = \frac{2744}{675}\,I_3^5.
\end{displaymath}
Substituting these expressions in the 23 relations satisfied by the
Dixmier-Ohno invariants \cite{ohno}, we obtain a polynomial system that
decomposes into 5 distinct radical components. The three first ones are such
that $I_3(\bar{F_0})=0$, and the last one is such that
$I_{27}(\bar{F_0})\neq0$, which in both cases contradicts our
hypotheses. There thus only remains one case for which $\DO(\bar{F}_0)$ are
equal to the ones in \eqref{eq:1}.  We therefore get by
Proposition~\ref{prop:MHR} that $F$ admits a \PHM{} $Q_0^2+\pi^{2r} G=0$.

Since the reduction type of the stable model does not change by completion
\cite[Prop. 10.3.15]{liu-book} and neither do the assumptions, we can take
$\O$ complete. Using Corollary~\ref{cor:T1}, we can furthermore assume that
$\rho(F) = \rho_0$. Moreover since $v_{\iota}(\iota_{42}(F))=0$,
equation~\eqref{eq:trompe} shows that $v(D_{14}(b_8(G)) \ne \infty$,
\ie~$D_{14}(b_8(G)) \ne 0$ and so $b_8(G)=0$ is GIT-stable over $\bar{K}$ (see
\cite[Prop.4.2]{mumford-fogarty}). We are therefore in the situation of
Proposition~\ref{newF2}.  In particular, we know that there exists a \PHM{}
$F_1=Q_0^2 + \pi^{2s} G_1$ such that $f_1=b_8(G_1)$ has GIT-semi-stable
reduction.  Hence, by Corollary~\ref{cor:Shiodas}, there exists
$2 \leq i_0 \leq 7$ such that $v(j_{i_0}(f_1))=0$. From
equation~\eqref{eq:DOshort}, we get that
$v(\iota_{3i_0}(F_1)) = \pi^{i_0 \cdot 2s}$.  Hence
\begin{displaymath}
  \frac{\iota_6(F_1)}{\pi^{2\cdot 2s}},\ \ldots,\ \frac{\iota_{21}(F_1)}{\pi^{7 \cdot 2s}},\ %
  \frac{\iota_{42}(F_1)}{\pi^{14 \cdot 2s}}
\end{displaymath}
is a minimal representative of the invariants $\iota$ of $F$. Since we have
assume that $v_{\iv}(\iota_{42}(F))=0$, this implies that
$v(\iota_{42}(F_1)) = 14 \cdot 2s$, so $v(D_{14}(f_1))=0$ and $F_1$ is a good
\PHM{}, which concludes the proof.
\end{proof}

Actually, one can get expressions as in Proposition~\ref{prop:shortrel} for
all the Shioda invariants $j_2,\ldots,j_{10}$. In the case of potentially good
hyperelliptic reduction, we can therefore reconstruct an equation of the
special fiber over $\bar{k}$ using the results of \cite{LR11}.

\begin{example} \label{ex:bass} We resume with Example~\ref{ex:x13}. Let
  $C/\Q : F=0$ where
$$F=(x_2 +x_3) x_1^3-(2 x_2^2 + x_3 x_2) x_1^2+(x_2^3-x_3 x_2^2+2 x_3^2 x_2-x_3^3) x_1-2 x_3^2 x_2^2+3 x_3^3 x_2.$$
One can check that $D_{27}(F)=-13^6$ so the curve has good quartic reduction
away from $13$. At $13$ the valuations of Dixmier-Ohno invariants are all $0$
except for the one of $D_{27}(F)$ which is $6$. Moreover using the formulas of
Proposition~\ref{prop:shortrel}, we get that the valuations at $13$ of the
invariants $\iota$ are $(1, 2, 2, 3, 3, 3)$. Therefore
$$v_{\DO}(I_3(F))=0, \;  v_{\DO}(D_{27}(F))=\frac{6}{27}>0, \;
v_{\iota}(I_3(F)^5 D_{27}(F))
=\frac{6}{42}-\min\left\{\frac{1}{6},\frac{2}{9},\frac{2}{12},\frac{3}{15},\frac{3}{18},\frac{3}{21},\frac{6}{42}\right\}=0.$$
The curve has hence potentially good hyperelliptic reduction at $13$ and its
special fiber is the hyperelliptic curve of affine equation $y^2=x^7-1$.
\end{example}

\begin{example} \label{ex:final} We conclude with
  Example~\ref{ex:cm}. Applying our criterion to the primes $p=37$ and
  $p=15187$, we find that $X_1$ has potentially good hyperelliptic reduction
  at these primes. Actually, at these primes it would be easy to write a good
  toggle model since $F_1$ is of the form $Q^2+ p G$. In Example~\ref{ex:X1},
  we took care of the excluded primes $5$ and $7$. For $2$, one can apply
  \cite[Prop.4.1]{KLLRSS17}: there are $2$ primes over $2$ in the maximal CM
  order of $X_1$, hence the Jacobian is absolutely simple and $X_1$ has
  potentially good hyperelliptic reduction at $2$.

  Similar analyses can be made for all other $X_i$ from \textit{loc. cit.}. We
  summarize in the Table~\ref{tab:resultsCM} our best knowledge of the
  reduction type of each of them at the primes dividing their
  discriminant. When we had to use the CM order to reach a conclusion, we
  indicate it with a $\dag$.

  \begin{table}
    \begin{footnotesize}
      \begin{center}
        \begin{tabular}{c|c|c|c|c|}
          Curve & Pot. non-hyp. &  Pot. hyp. & No pot. good & Unknown \\
          \hline\hline
          $X_{1}$ & $13$ & $2^\dagger$, $37$, $15187$ & $5$, $7$ &\\
          $X_{2}$ & & $2^\dagger$, $701$ & $3$, $7$ &\\
          $X_{3}$ & $31$ & $2^\dagger$, $233$, $356399$ & $3$, $5$ & $7$\\
          $X_{5}$ & $13$ & $2^\dagger$, $37$, $127$ & $3$ & $7$\\
          $X_{6}$ & $19$ & $2^\dagger$, $127$, $211$, $20707$ & $3$, $17$ & $7$\\
          $X_{7}$ & $73$ & $2^\dagger$, $71$, $17665559$ & $3$, $83$ & $7$\\
          $X_{8}$ & $19$ & $2^\dagger$, $499$ & $7$ &\\
          $X_{9}$ & $13$ & $79$, $233$, $857$ & $5$ & $2$, $7$\\
          $X_{10}$ & & $41$, $71$ & & $2$, $7$\\
          $X_{11}$ & $31$ & $23$, $47$, $27527$ & & $2$, $7$\\
          $X_{12}$ & & $5711$, $73064203493$ & $11$ & $7$\\
          $X_{13}$ & $43$ & $547$, $11827$, $189169$ & $2$, $11$ &\\
          $X_{14}$ & $19$ & $101$, $107$, $8378707$ & $11$ &\\
          $X_{15}$ & & $19$ & &\\
          $X_{16}$ & & $19$, $37$, $79$, $13373064392147$ & $3$ &\\
          $X_{17}$ & & $19$, $1229$, $3913841117$ & $2$ & $3$\\
          $X_{18}$ & $13$ & $19$, $101$, $251$, $7468843725186901$ & $2$ &\\
          $X_{19}$ & & $11$, $43$ & $2$ &\\
          $X_{20}$ & & $67$, $1439$, $2739021126001$ & &\\
          \hline
        \end{tabular}
      \end{center}
      \caption{CM-curves from \cite{KLLRSS17} with the reduction type for the primes dividing the discriminant} \label{tab:resultsCM}
    \end{footnotesize}
  \end{table}
\end{example}

\begin{remark} \label{rem:p>7} The assumption $p>7$ was used in various
  occasions. In Lemma~\ref{lem:test-conic} or in
  Proposition~\ref{prop:shortrel}, a better choice of invariants or
  normalization of the expressions may overcome this restriction, but the
  appearance of these primes in equation \eqref{eq:rho0} is more
  symptomatic. Let us for instance consider $p=7$. It is known that the Klein
  quartic $C/\Q : F=0$ where $F=x_1^3 x_2+x_2^3x_3+x_3^3 x_1$ has potentially
  good hyperelliptic reduction modulo $7$. However, since
  $\Aut_{\bar{\Q}}(C) \simeq \PSL_2(\F_7)$ and the latter is not a subgroup of
  $\SO_3(\bar{\Q}) \simeq \PSL_2(\bar{\Q})$, it cannot be the automorphism
  group of any non-degenerate conic. Hence, $\rho_0(F)=0$ and this is the case
  for any other possible covariant or contravariant of order 2. Therefore, we
  cannot go further with our current strategy in this case. However, in this
  particular case it is easy to find a \PHM, see \cite[p.56 and p.82]{Elkies}.
\end{remark}

\newcommand{\etalchar}[1]{$^{#1}$}


\begin{thebibliography}{KlcLGS20}

\bibitem[ACGH85]{harris}
E.~Arbarello, M.~Cornalba, P.~Griffiths, and J.~Harris.
\newblock {\em Geometry of algebraic curves, Vol. {I}}, volume 267.
\newblock Grundlehren der Mathematischen Wissenschaften, Springer-Verlag,
  New-York, 1985.

\bibitem[Art09]{artebani}
M.~Artebani.
\newblock A compactification of {$M_3$} via {$K3$} surfaces.
\newblock {\em Nagoya Math. J.}, 196:1--26, 2009.

\bibitem[Bas15]{basson}
R.~Basson.
\newblock {\em Arithm{\'e}tique des espaces de modules des courbes
  hyperelliptiques de genre $3$ en caract\'eristique positive}.
\newblock PhD thesis, Universit\'e de Rennes 1, Rennes, 2015.

\bibitem[BBW17]{BBW17}
M.~B\"{o}rner, I.~I. Bouw, and S.~Wewers.
\newblock Picard curves with small conductor.
\newblock In {\em Algorithmic and experimental methods in algebra, geometry,
  and number theory}, pages 97--122. Springer, Cham, 2017.

\bibitem[BCL{\etalchar{+}}15]{WINE1}
I.~Bouw, J.~Cooley, K.~Lauter, E.~{Lorenzo Garc\'\i a}, M.~Manes, R.~Newton,
  and E.~Ozman.
\newblock Bad reduction of genus three curves with complex multiplication.
\newblock In {\em Women in numbers {E}urope}, volume~2 of {\em Assoc. Women
  Math. Ser.}, pages 109--151. Springer, Cham, 2015.

\bibitem[BDM{\etalchar{+}}19]{BDMTV17}
J.~Balakrishnan, N.~Dogra, J.~S. M\"{u}ller, J.~Tuitman, and J.~Vonk.
\newblock Explicit {C}habauty-{K}im for the split {C}artan modular curve of
  level 13.
\newblock {\em Ann. of Math. (2)}, 189(3):885--944, 2019.

\bibitem[BKSW20]{BKKSW}
I.~I. Bouw, A.~Koutsianas, J.~Sijsling, and S.~Wewers.
\newblock Conductor and discriminant of picard curves, 2020.
\newblock To appear in J. of the London Math. Soc.

\bibitem[Bur92]{Bur}
J.-F. Burnol.
\newblock Remarques sur la stabilit\'{e} en arithm\'{e}tique.
\newblock {\em Internat. Math. Res. Notices}, (6):117--127, 1992.

\bibitem[BW17]{BW}
I.~I. Bouw and S.~Wewers.
\newblock Computing {$L$}-functions and semistable reduction of superelliptic
  curves.
\newblock {\em Glasg. Math. J.}, 59(1):77--108, 2017.

\bibitem[Cle80]{clemens}
C.~H. Clemens.
\newblock {\em A scrapbook of complex curve theory}.
\newblock Plenum Press, New York-London, 1980.
\newblock The University Series in Mathematics.

\bibitem[DDMM19]{maistret}
T.~Dokchitser, V.~Dokchitser, C.~Maistret, and A.~Morgan.
\newblock Semistable types of hyperelliptic curves.
\newblock In {\em Algebraic curves and their applications}, volume 724 of {\em
  Contemp. Math.}, pages 73--135. Amer. Math. Soc., Providence, RI, 2019.

\bibitem[Dem12]{demazure}
M.~Demazure.
\newblock R\'esultant, discriminant.
\newblock {\em Enseign. Math. (2)}, 58(3-4):333--373, 2012.

\bibitem[Dix87]{dixmier}
J.~Dixmier.
\newblock On the projective invariants of quartic plane curves.
\newblock {\em Adv. in Math.}, 64:279--304, 1987.

\bibitem[DK02]{DerKem}
H.~Derksen and G.~Kemper.
\newblock {\em Computational invariant theory}.
\newblock Invariant Theory and Algebraic Transformation Groups, I.
  Springer-Verlag, Berlin, 2002.
\newblock Encyclopaedia of Mathematical Sciences, 130.

\bibitem[Edi90]{edixhoven90}
B.~Edixhoven.
\newblock {Minimal resolution and stable reduction of {$X_0(N)$}.}
\newblock {\em {Ann. Inst. Fourier}}, 40(1):31--67, 1990.

\bibitem[Elk99]{Elkies}
N.~D. Elkies.
\newblock The {K}lein quartic in number theory.
\newblock {\em Math. Sci. Res. Inst. Publ.}, 35:51--101, 1999.

\bibitem[Els15]{elsenhans}
A.-S. Elsenhans.
\newblock Explicit computations of invariants of plane quartic curves.
\newblock {\em J. Symbolic Comput.}, 68(part 2):109--115, 2015.

\bibitem[ES20]{elsenhans-stoll}
A.-S. Elsenhans and M.~Stoll.
\newblock Good models for cubic surfaces.
\newblock {\em In progress}, 2020.

\bibitem[GKZ94]{gelfand}
I.~M. Gelfand, M.~M. Kapranov, and A.~V. Zelevinsky.
\newblock {\em Discriminants, resultants, and multidimensional determinants}.
\newblock Mathematics: Theory \& Applications. Birkh\"auser Boston Inc.,
  Boston, MA, 1994.

\bibitem[Gla79]{glass}
J.~P. Glass.
\newblock Bitangents of plane quartics.
\newblock {\em Bull. Austral. Math. Soc.}, 20(2):207--210, 1979.

\bibitem[GLL15]{GLL}
O.~Gabber, Q.~Liu, and D.~Lorenzini.
\newblock Hypersurfaces in projective schemes and a moving lemma.
\newblock {\em Duke Math. J.}, 164(7):1187--1270, 2015.

\bibitem[Har87]{harriscm}
J.~Harris.
\newblock Curves and their moduli.
\newblock In {\em Algebraic geometry, {B}owdoin, 1985 ({B}runswick, {M}aine,
  1985)}, volume~46 of {\em Proc. Sympos. Pure Math.}, pages 99--143. Amer.
  Math. Soc., Providence, RI, 1987.

\bibitem[Has99]{hassett}
B.~Hassett.
\newblock Stable log surfaces and limits of quartic plane curves.
\newblock {\em Manuscripta Math.}, 100(4):469--487, 1999.

\bibitem[HL10]{hyeon}
D.~Hyeon and Y.~Lee.
\newblock Log minimal model program for the moduli space of stable curves of
  genus three.
\newblock {\em Math. Res. Lett.}, 17(4):625--636, 2010.

\bibitem[HLP00]{howe00}
E.~W. {Howe}, F.~{Lepr\'evost}, and B.~{Poonen}.
\newblock {Large torsion subgroups of split Jacobians of curves of genus two or
  three.}
\newblock {\em {Forum Math.}}, 12(3):315--364, 2000.

\bibitem[HM98]{harris-moduli}
J.~Harris and I.~Morrison.
\newblock {\em Moduli of curves}, volume 187 of {\em Graduate Texts in
  Mathematics}.
\newblock Springer-Verlag, New York, 1998.

\bibitem[Hol95]{Hol}
R.-P. Holzapfel.
\newblock {\em The ball and some {H}ilbert problems}.
\newblock Lectures in Mathematics ETH Z\"urich. Birkh\"auser Verlag, Basel,
  1995.
\newblock Appendix I by J. Estrada Sarlabous.

\bibitem[Kan00]{kang}
P.-L. Kang.
\newblock On singular plane quartics as limits of smooth curves of genus three.
\newblock {\em J. Korean Math. Soc.}, 37(3):411--436, 2000.

\bibitem[KlcLGS20]{KLS18}
P.~n. K\i~l\i \c{c}er, E.~Lorenzo~Garc\'{\i}a, and M.~Streng.
\newblock Primes dividing invariants of {CM} {P}icard curves.
\newblock {\em Canad. J. Math.}, 72(2):480--504, 2020.

\bibitem[KLL{\etalchar{+}}16]{WINE1-2}
P.~K{\i}l{\i}\c{c}er, K.~Lauter, E.~{Lorenzo Garc{\'i}a}, R.~Newton, E.~Ozman,
  and M.~Streng.
\newblock A bound on the primes of bad reduction of {CM} curves of genus 3.
\newblock 2016.

\bibitem[KLL{\etalchar{+}}18]{KLLRSS17}
P.~K{\i}l\i\c{c}er, H.~Labrande, R.~Lercier, C.~Ritzenthaler, J.~Sijsling, and
  M.~Streng.
\newblock Plane quartics over {$\Bbb {Q}$} with complex multiplication.
\newblock {\em Acta Arith.}, 185(2):127--156, 2018.

\bibitem[Kol97]{kollar}
J.~Koll\'{a}r.
\newblock Polynomials with integral coefficients, equivalent to a given
  polynomial.
\newblock {\em Electron. Res. Announc. Amer. Math. Soc.}, 3:17--27, 1997.

\bibitem[Kon00]{kondo}
S.~Kond\=o.
\newblock A complex hyperbolic structure for the moduli space of curves of
  genus three.
\newblock {\em J. Reine Angew. Math.}, 525:219--232, 2000.

\bibitem[KW05]{kowe}
K.~Koike and A.~Weng.
\newblock Construction of {CM} {P}icard curves.
\newblock {\em Math. Comp.}, 74(249):499--518 (electronic), 2005.

\bibitem[Liu93]{liug2}
Q.~Liu.
\newblock Courbes stables de genre {$2$} et leur sch\'ema de modules.
\newblock {\em Math. Ann.}, 295(2):201--222, 1993.

\bibitem[Liu02]{liu-book}
Q.~Liu.
\newblock {\em Algebraic geometry and arithmetic curves}, volume~6 of {\em
  Oxford Graduate Texts in Mathematics}.
\newblock Oxford University Press, Oxford, 2002.
\newblock Translated from the French by Reinie Ern\'e, Oxford Science
  Publications.

\bibitem[LL99]{LiLo}
Q.~Liu and D.~Lorenzini.
\newblock Models of curves and finite covers.
\newblock {\em Compositio Math.}, 118:61--102, 1999.

\bibitem[LLLR19]{HSOPList}
R.~{Lercier}, Q.~{Liu}, E.~{Lorenzo Garc{\'i}a}, and C.~{Ritzenthaler}.
\newblock Reduction type of smooth quartics, ancillary files.
\newblock Available at \href{https://arxiv.org/src/1803.05816/anc}{\tt
  https://arxiv.org/src/1803.05816/anc/}, 2019.

\bibitem[LR12]{LR11}
R.~Lercier and C.~Ritzenthaler.
\newblock Hyperelliptic curves and their invariants: geometric, arithmetic and
  algorithmic aspects.
\newblock {\em J. Algebra}, 372:595--636, 2012.

\bibitem[LR19]{LR19}
R.~Lercier and C.~Ritzenthaler.
\newblock Siegel modular forms of degree three and invariants of ternary
  quartics.
\newblock {\em Proc. Amer. Math. Soc.}, 2019.
\newblock To appear.

\bibitem[LRS18]{LRS16}
R.~{Lercier}, C.~{Ritzenthaler}, and J.~{Sijsling}.
\newblock {Reconstructing plane quartics from their invariants}.
\newblock {\em Discrete \& Computational Geometry}, pages 1--41, 2018.

\bibitem[LS16]{LaSo}
J.-C. Lario and A.~{Somoza}.
\newblock A note on {P}icard curves of {CM}-type.
\newblock Preprint, 2016.

\bibitem[MF82]{mumford-fogarty}
D.~Mumford and J.~Fogarty.
\newblock {\em Geometric invariant theory}, volume~34 of {\em Ergebnisse der
  Mathematik und ihrer Grenzgebiete [Results in Mathematics and Related
  Areas]}.
\newblock Springer-Verlag, Berlin, second edition, 1982.

\bibitem[Mum77]{mum-ens}
D.~Mumford.
\newblock Stability of projective varieties.
\newblock {\em Enseignement Math. (2)}, 23(1-2):39--110, 1977.

\bibitem[Ohn07]{ohno}
T.~Ohno.
\newblock The graded ring of invariants of ternary quartics {I}, 2007.
\newblock unpublished.

\bibitem[Ses77]{Se}
C.~S. Seshadri.
\newblock Geometric reductivity over arbitrary base.
\newblock {\em Advances in Math.}, 26(3):225--274, 1977.

\bibitem[Sha80]{shah}
J.~Shah.
\newblock A complete moduli space for {$K3$}\ surfaces of degree {$2$}.
\newblock {\em Ann. of Math. (2)}, 112(3):485--510, 1980.

\bibitem[Shi67]{shioda67}
T.~Shioda.
\newblock On the graded ring of invariants of binary octavics.
\newblock {\em American J. of Math.}, 89(4):1022--1046, 1967.

\bibitem[Shi93]{shioda-quartic}
T.~Shioda.
\newblock Plane quartics and {M}ordell-{W}eil lattices of type {$E_7$}.
\newblock {\em Comment. Math. Univ. St. Paul.}, 42(1):61--79, 1993.

\bibitem[Sil98]{silverman-dyna}
J.~H. Silverman.
\newblock The space of rational maps on {$\bold P^1$}.
\newblock {\em Duke Math. J.}, 94(1):41--77, 1998.

\bibitem[STW14]{szpiro-semi}
L.~Szpiro, M.~Tepper, and P.~Williams.
\newblock Semi-stable reduction implies minimality of the resultant.
\newblock {\em J. Algebra}, 397:489--498, 2014.

\bibitem[Tsu86]{tsuyumine-g3}
S.~Tsuyumine.
\newblock On {S}iegel modular forms of degree three.
\newblock {\em Amer. J. Math.}, 108(4):755--862, 1986.

\bibitem[Yuk86]{yukie}
A.~Yukie.
\newblock {\em A{pplications} {of} {Equivariant} {Morse} {Stratifications}
  ({Geometric} {Invariant} {Theory}, {Eisenstein} {Series})}.
\newblock ProQuest LLC, Ann Arbor, MI, 1986.
\newblock Thesis (Ph.D.)--Harvard University.

\end{thebibliography}
\end{document}